
\synctex=1

\RequirePackage[l2tabu,orthodox]{nag}		
\documentclass[reqno]{amsart}		
\usepackage[margin=1.5in,bottom=1.25in]{geometry}		


\usepackage{amsmath}		
\usepackage{amssymb}		
\usepackage{amsfonts}		
\usepackage{amsthm}		
\usepackage[foot]{amsaddr}		

\usepackage{mathtools}		

\mathtoolsset{%
}

\usepackage[utf8]{inputenc}		
\usepackage[T1]{fontenc}		


\usepackage[proportional,tabular,lining,sf,mono=false]{libertine}

\usepackage{dsfont}		





\usepackage[
cal=cm,
]
{mathalfa}


\usepackage{acronym}		
\newcommand{\acdef}[1]{\textit{\acl{#1}} \textup{(\acs{#1})}\acused{#1}}		

\usepackage[labelfont={bf,small},labelsep=colon,font=small]{caption}	
\captionsetup[algorithm]{labelfont=bf,labelsep=colon}		

\usepackage[dvipsnames,svgnames]{xcolor}		
\colorlet{MyRed}{Crimson!90!Black}
\colorlet{MyBlue}{MediumBlue!90!Black}
\colorlet{MyGreen}{DarkGreen!80!Black}

\newcommand{\afterhead}{.}		
\newcommand{\para}[1]{\medskip\paragraph{\textbf{#1\afterhead}}}

\usepackage{pifont}


\usepackage{subcaption}		
\usepackage{tikz}		
\usetikzlibrary{calc,patterns,positioning}

\usepackage{booktabs}		
\usepackage[inline,shortlabels]{enumitem}		
\setenumerate{itemsep=\smallskipamount,topsep=\medskipamount}
\setitemize{itemsep=\smallskipamount,topsep=\medskipamount}

\usepackage[kerning=true]{microtype}		

\usepackage{xspace}		


\usepackage[numbers,sort&compress]{natbib}		

\bibpunct[, ]{[}{]}{,}{n}{,}{,}

\usepackage{hyperref}
\hypersetup{
colorlinks=true,
linktocpage=true,
pdfstartview=FitH,
breaklinks=true,
pdfpagemode=UseNone,
pageanchor=true,
pdfpagemode=UseOutlines,
plainpages=false,
bookmarksnumbered,
bookmarksopen=false,
bookmarksopenlevel=1,
hypertexnames=true,
pdfhighlight=/O,
urlcolor=MyBlue,linkcolor=MyBlue,citecolor=MyBlue,	
pdftitle={},
pdfauthor={},
pdfsubject={},
pdfkeywords={},
pdfcreator={pdfLaTeX},
pdfproducer={LaTeX with hyperref}
}

\usepackage[linesnumbered,ruled,vlined]{algorithm2e}

\SetCommentSty{mycommfont} 

\usepackage[sort&compress,capitalize,nameinlink]{cleveref}		
\crefname{algorithm}{Algorithm}{Algorithms}
\crefname{equation}{Eq.}{Eqs.}




\usepackage{thmtools}		
\usepackage{thm-restate}		

\theoremstyle{plain}
\newtheorem{theorem}{Theorem}		
\newtheorem{lemma}{Lemma}		


\newtheorem*{corollary*}{Corollary}		

\theoremstyle{definition}
\newtheorem{definition}{Definition}		
\newtheorem{example}{Example}		

\newtheorem*{definition*}{Definition}		
\newtheorem*{assumption*}{Assumptions}		
\newtheorem*{example*}{Example}		

\theoremstyle{remark}
\newtheorem{remark}{Remark}		
\newtheorem*{remark*}{Remark}		

\def\endenv{\hfill\raisebox{1pt}{\P}\smallskip}


\usepackage[suppress]{color-edits}		
\setlength{\marginparwidth}{1.5cm}


\newcommand{\debug}[1]{#1}		



\newcommand{\newmacro}[2]{\newcommand{#1}{\debug{#2}}}		
\newcommand{\newop}[2]{\DeclareMathOperator{#1}{\debug{#2}}}		

\DeclarePairedDelimiter{\braces}{\{}{\}}		
\DeclarePairedDelimiter{\bracks}{[}{]}		
\DeclarePairedDelimiter{\parens}{(}{)}		

\DeclarePairedDelimiter{\abs}{\lvert}{\rvert}		

\DeclarePairedDelimiterX{\setdef}[2]{\{}{\}}{#1:#2}		
\DeclarePairedDelimiterXPP{\exclude}[1]{\mathopen{}\setminus}{\{}{\}}{}{#1}

\newcommand{\N}{\mathbb{N}}		
\newcommand{\R}{\mathbb{R}}		
\newcommand{\C}{\mathbb{C}}		

\DeclareMathOperator*{\argmax}{arg\,max}		
\DeclareMathOperator*{\argmin}{arg\,min}		

\DeclareMathOperator{\bigoh}{\mathcal{O}}		
\DeclarePairedDelimiterXPP{\bigof}[1]{\bigoh}{(}{)}{}{#1}		
\DeclareMathOperator{\conv}{conv}		
\DeclareMathOperator{\dist}{dist}		
\DeclareMathOperator{\dom}{dom}		
\DeclareMathOperator{\ess}{ess}		
\DeclareMathOperator{\relint}{ri}		
\DeclareMathOperator{\tr}{tr}		

\newcommand{\cf}{cf.\xspace}		
\newcommand{\ie}{i.e.,\xspace}		

\newcommand{\textpar}[1]{\textup(#1\textup)}		


\newcommand{\alt}[1]{#1'}		
\newcommand{\altalt}[1]{#1''}		
\newcommand{\avg}[1]{\bar#1}		

\newmacro{\dd}{\:d}		

\newcommand{\insum}{\sum\nolimits}		

\newmacro{\const}{C}		
\newmacro{\constalt}{B}		
\newmacro{\constdual}{\const_{\texttt{dual}}}		
\newmacro{\conststoch}{\constalt}		

\newmacro{\multi}{\texttt{H}}		

\newmacro{\coef}{\lambda}		
\newmacro{\param}{\theta}		
\newmacro{\params}{\Theta}		

\newmacro{\pexp}{r}		
\newmacro{\qexp}{q}		
\newmacro{\rexp}{r}		


\newmacro{\beforestart}{0}		
\newmacro{\start}{1}		
\newmacro{\afterstart}{2}		
\newmacro{\running}{\start,\afterstart,\dotsc\,}		

\newmacro{\run}{t}		
\newmacro{\runprev}{\run-1}		
\newmacro{\runalt}{s}		
\newmacro{\runaltalt}{\alt \run}		
\newmacro{\nRuns}{T}		
\newmacro{\runs}{\mathcal{\nRuns}}		

\newmacro{\state}{X}		
\newmacro{\stateavg}{\bar\state}		
\newmacro{\statealt}{\alt\state}		
\newmacro{\query}{x}		
\newmacro{\out}{\bar\query}		


\newcommand{\new}[1]{#1^{+}}		

\newcommand{\init}[1][\state]{\debug{#1}_{\start}}		

\newcommand{\iter}[1][\state]{\debug{#1}_{\runalt}}		
\newcommand{\iterlead}[1][\state]{\debug{#1}_{\runalt+1/2}}		

\newcommand{\prev}[1][\state]{\debug{#1}_{\run-1}}		
\newcommand{\curr}[1][\state]{\debug{#1}_{\run}}		
\renewcommand{\next}[1][\state]{\debug{#1}_{\run+1}}		

\newcommand{\lead}[1][\state]{\debug{#1}_{\run+1/2}}		

\newcommand{\last}[1][\state]{\debug{#1}_{\nRuns}}		
\newcommand{\lastlead}[1][\state]{\debug{#1}_{\nRuns+1/2}}		

\newop{\Eq}{Eq}		
\newop{\Nash}{NE}		

\newcommand{\gap}{\Delta}
\newop{\brep}{br}		
\newmacro{\reg}{\mathcal{R}}		
\newop{\regstoch}{\tilde{\reg}}		
\newop{\preg}{Reg}		

\newop{\val}{val}		
\newmacro{\play}{i}		
\newmacro{\playalt}{j}		
\newmacro{\playaltalt}{k}		
\newmacro{\nPlayers}{N}		
\newmacro{\players}{\mathcal{\nPlayers}}		

\newmacro{\pure}{\alpha}		
\newmacro{\purealt}{\beta}		
\newmacro{\purealtalt}{\gamma}		
\newmacro{\nPures}{n}		
\newmacro{\pures}{\mathcal{A}}		

\newmacro{\strat}{p}		
\newmacro{\stratalt}{\alt\strat}		
\newmacro{\strataltalt}{\altalt\strat}		
\newmacro{\strats}{\mathcal{X}}		
\newmacro{\intstrats}{\strats^{\circle}}		


\newmacro{\pay}{u}		
\newmacro{\payv}{v}		
\newmacro{\loss}{\ell}
\newmacro{\lossv}{\sample}
\newmacro{\pot}{\Phi}		
\newmacro{\meanpot}{\pot}		

\newmacro{\game}{\Gamma}		
\newmacro{\meangame}{\game}		
\newmacro{\gameall}{\game(\players,\points,\loss)}		

\newmacro{\fingame}{\Gamma}		
\newmacro{\fingameall}{\Gamma(\players,\pures,\pay)}		

\newmacro{\gmat}{g}		
\newmacro{\gdist}{\dist_{\gmat}}
\newmacro{\mfld}{M}		
\newmacro{\form}{\omega}		

\newmacro{\tvec}{z}		
\newmacro{\uvec}{u}		

\newmacro{\ball}{\mathbb{B}}		
\newmacro{\sphere}{\mathbb{S}}		

\newmacro{\vecspace}{\mathcal{V}}		
\newmacro{\subspace}{\mathcal{W}}		

\newmacro{\bvec}{e}		
\newmacro{\bvecs}{\mathcal{E}}		

\newmacro{\coord}{i}		
\newmacro{\coordalt}{\coord^{\prime}}		
\newmacro{\coordaltalt}{\coordalt^{\prime}}		
\newmacro{\nCoords}{d}		
\newmacro{\dims}{\nCoords}		
\newmacro{\vdim}{\nCoords}		

\newmacro{\pspace}{\vecspace}		
\newmacro{\dspace}{\vecspace^{\ast}}		

\newmacro{\pstate}{z}		

\newmacro{\dstate}{Y}		
\newmacro{\dvec}{{v}}		
\newmacro{\disdstate}{\hat\dstate}		

\newmacro{\anchor}{Z}
\newmacro{\ptest}{\tilde{\pstate}}		
\newmacro{\test}{\tilde{\state}}		
\newmacro{\dtest}{\tilde{\drecom}}		
\newmacro{\testsignal}{\tilde{\signal}}		
\newmacro{\precom}{\pstate}	
\newmacro{\drecom}{\dstate}		

\newmacro{\dpoint}{y}		

 
\newmacro{\dpointave}{\bar\dpoint}		
\newmacro{\dpointalt}{W}		
\newmacro{\dpointaltalt}{\altalt\dpoint}		
\newmacro{\dpoints}{\mathcal{Y}}		

\newmacro{\mat}{\mathbf{M}}		
\newmacro{\hmat}{\mathbf{H}}		

\newmacro{\ones}{\mathbf{1}}		
\newmacro{\eye}{\mathbf{I}}		
\newmacro{\zer}{\mathbf{0}}		

\newcommand{\mgeq}{\succcurlyeq}		

\DeclarePairedDelimiter{\norm}{\lVert}{\rVert}		
\DeclarePairedDelimiterXPP{\dnormdef}[1]{}{\lVert}{\rVert}{_{\ast}}{#1}
\DeclarePairedDelimiterXPP{\dnorm}[1]{}{\lVert}{\rVert}{_{\ast}}{#1}

\DeclarePairedDelimiterXPP{\onenorm}[1]{}{\lVert}{\rVert}{_{1}}{#1}		
\DeclarePairedDelimiterXPP{\twonorm}[1]{}{\lVert}{\rVert}{_{2}}{#1}		
\DeclarePairedDelimiterXPP{\supnorm}[1]{}{\lVert}{\rVert}{_{\infty}}{#1}		

\DeclarePairedDelimiter{\altnorm}{\lVert}{\rVert'}		
\DeclarePairedDelimiterXPP{\altdnorm}[1]{}{\lVert}{\rVert'}{_{\ast}}{#1}

\DeclarePairedDelimiterX{\braket}[2]{\langle}{\rangle}{#1\mathopen{},\mathopen{}#2}

\DeclarePairedDelimiterX{\inner}[2]{\langle}{\rangle}{#1,#2}		
\newmacro{\cartprod}{\bigtimes}

\newcommand{\defeq}{\coloneqq}		
\newcommand{\eqdef}{\eqqcolon}		

\newcommand{\from}{\colon}		

\newop{\Opt}{Opt}		
\newop{\Sol}{Sol}		
\newop{\orcl}{\mathsf{G}}		

\newmacro{\obj}{f}		
\newmacro{\objalt}{\alt \obj}		
\newmacro{\sobj}{F}		
\newmacro{\func}{\textsl{g}}

\newmacro{\gvec}{g}		
\newmacro{\oper}{A}		
\newmacro{\vecfield}{v}		
\newmacro{\seed}{\sample}		

\newcommand{\sol}[1][\point]{#1^{\ast}}		
\newcommand{\sols}{\sol[\points]}		

\newmacro{\gbound}{G}		
\newmacro{\lips}{L}

\newmacro{\strong}{\kappa}		

\newmacro{\cvx}{\mathcal{K}}		
\newmacro{\subd}{\partial}		
\newmacro{\subsel}{\nabla}		

\newmacro{\minmax}{L}		

\newmacro{\minvar}{\theta}		
\newmacro{\minvaralt}{\alt\minvar}		
\newmacro{\minvars}{\Theta}		

\newmacro{\maxvar}{\phi}		
\newmacro{\maxvaralt}{\alt\maxvar}		
\newmacro{\maxvars}{\Phi}		

\newmacro{\hreg}{h}		
\newmacro{\proxdom}{\points_{\hreg}}		

\newmacro{\breg}{D}		
\newmacro{\mprox}{P}		

\newmacro{\fench}{F}		
\newmacro{\mirror}{Q}		

\newmacro{\hstr}{K_{\hreg}}		
\newmacro{\bregdiam}{B_{\hreg}}		
\newmacro{\range}{R_{\hreg}}		

\DeclarePairedDelimiterXPP{\proxof}[2]{\mprox_{#1}}{(}{)}{}{#2}		
\DeclarePairedDelimiterXPP{\bregof}[2]{\breg}{(}{)}{}{#1,#2}		
\DeclarePairedDelimiterXPP{\fenchof}[2]{\fench}{(}{)}{}{#1,#2}		

\newmacro{\zone}{\mathbb{D}}		

\newop{\Eucl}{\Pi}		
\newop{\logit}{\Lambda}		
\newmacro{\subgrad}{W}

\newmacro{\point}{x}		
\newmacro{\pointalt}{\alt\point}		
\newmacro{\pointaltalt}{\altalt\point}		
\newmacro{\points}{\mathcal{X}}		
\newmacro{\intpoints}{\relint\points}		

\newmacro{\base}{\point^{\ast}}		
\newmacro{\basealt}{u^{\ast}}		

\newmacro{\real}{a}
\newmacro{\realalt}{\alt\real}
\newmacro{\realaltalt}{\altalt \real}

\newmacro{\open}{\mathcal{U}}		
\newmacro{\closed}{\mathcal{C}}		
\newmacro{\cpt}{\mathcal{K}}		
\newmacro{\nhd}{\mathcal{U}}		

\newop{\ex}{\mathbb{E}}		
\newop{\prob}{\mathbb{P}}		
\newop{\Var}{Var}		
\newop{\simplex}{\Delta}		

\providecommand\given{}		

\DeclarePairedDelimiterXPP{\exof}[1]{\ex}{[}{]}{}{
	\renewcommand\given{\nonscript\,\delimsize\vert\nonscript\,\mathopen{}} #1}

\DeclarePairedDelimiterXPP{\probof}[1]{\prob}{(}{)}{}{
	\renewcommand\given{\nonscript\:\delimsize\vert\nonscript\:\mathopen{}} #1}



\newmacro{\sample}{\omega}		
\newmacro{\samples}{\Omega}		

\newmacro{\filter}{\mathcal{F}}		
\newmacro{\probspace}{(\samples,\filter,\prob)}		

\newmacro{\event}{E}       
\newmacro{\eventalt}{H}       
\newmacro{\mean}{\mu}		
\newmacro{\sdev}{\sigma}		
\newmacro{\variance}{\sdev^{2}}		


\newmacro{\step}{\alpha}		
\newmacro{\stepalt}{\gamma}		
\newmacro{\stepaltalt}{\theta}		
\newmacro{\stepada}{\theta}		

\newmacro{\learn}{\eta}		
\newmacro{\dstep}{\psi}		

\newmacro{\proper}{\tau}		

\newmacro{\signal}{\gvec}		
\newmacro{\altsignal}{\bar\signal}		
\newmacro{\error}{Z}		
\newmacro{\noise}{U}		
\newmacro{\bias}{b}		
\newmacro{\brown}{W}		

\newmacro{\serror}{\theta}		
\newmacro{\snoise}{\xi}		
\newmacro{\sbias}{\psi}		

\newmacro{\sbound}{M}		
\newmacro{\bbound}{B}		
\newmacro{\noisepar}{\sdev}		
\newmacro{\noisevar}{\texttt{var}}		

\newmacro{\denom}{\real^{2}}
\newmacro{\denomsqrt}{\real}
\newmacro{\numer}{b}


\newcommand{\accelegrad}{\debug{\textsc{AcceleGrad}}\xspace}
\newcommand{\unixgrad}{\debug{\textsc{UnixGrad}}\xspace}
\newcommand{\undergrad}{\debug{\textsc{UnderGrad}}\xspace}

\newmacro{\cost}{c}


\newmacro{\noiseave}{\tilde{\noise}}  
\newmacro{\noisetest}{\tilde{\noise}}  

\newmacro{\mindiff}{B}
\newmacro{\diff}{\xi}


\colorlet{KAcolor}{DarkMagenta}
\addauthor[\textbf{Kim}]{KA}{KAcolor}


\colorlet{DQVcolor}{DarkGreen}
\addauthor[\textbf{Quan}]{DQV}{DQVcolor}


\colorlet{VCcolor}{Crimson}
\addauthor[\textbf{Volkan}]{VC}{VCcolor}


\colorlet{KLcolor}{Chocolate!80!DarkRed}
\addauthor[\textbf{Kfir}]{KL}{KLcolor}


\colorlet{PMcolor}{Blue}
\addauthor[\textbf{Pan}]{PM}{PMcolor}


\newmacro{\resource}{s}
\newmacro{\nResources}{d}
\newmacro{\resources}{\mathcal{S}}
\newmacro{\inflow}{\rho}
\newmacro{\load}{x}

\newmacro{\spectron}{\mathcal{D}}
\newmacro{\nInput}{M}
\newmacro{\nOutput}{N}
\newmacro{\chanmat}{\mathbf{H}}
\newmacro{\covmat}{\mathbf{X}}
\newmacro{\bx}{\mathbf{x}}
\newmacro{\by}{\mathbf{y}}
\newmacro{\bz}{\mathbf{z}}
\newmacro{\capa}{R}
\newmacro{\power}{P}

\newmacro{\maxsel}{m}

\newmacro{\shape}{\chi}
\newmacro{\diampoints}{\norm{\points}}

\newmacro{\underconst}{\const}
\newmacro{\hconst}{\const_{\hreg}}
\newmacro{\slow}{\textrm{slow}}
\newmacro{\fast}{\textrm{fast}}

\begin{document}


\title
[A Universal First-Order Method with Almost Dimension-Free Guarantees]
{A Universal Black-Box Optimization Method with\\
Almost Dimension-Free Convergence Rate Guarantees}

\author
[K.~Antonakopoulos]
{Kimon Antonakopoulos$^{\ast,\diamond}$}
\address{$^{\sharp}$\,%
Laboratory for Information and Inference Systems, IEL STI EPFL, Lausanne, Switzerland.}
\email{kimon.antonakopoulos@epfl.ch}

\author
[D.~Q.~Vu]
{Dong Quan Vu$^{\sharp,\diamond}$}
\address{$^{\diamond}$\,%
Safran Tech, Magny-Les-Hameaux, France.}
\email{dong-quan.vu@safrangroup.com}

\author
[V.~Cevher]
{Volkan Cevher$^{\ast}$}
\email{volkan.cevher@epfl.ch}

\author
[K.~Y.~Levy]
{\\Kfir Yehuda Levy$^{\dag}$}
\address{$^{\dag}$\,%
Technion, Haifa, Israel.}
\email{kfirylevy@technion.ac.il}

\author
[P.~Mertikopoulos]
{Panayotis Mertikopoulos$^{\ddag,\S}$}
\address{$^{\ddag}$\,%
Univ. Grenoble Alpes, CNRS, Inria, Grenoble INP, LIG, 38000 Grenoble, France.}
\address{$^{\S}$\,%
Criteo AI Lab.}
\email{panayotis.mertikopoulos@imag.fr}


\thanks{This work was done when KA and DQV were affiliated with Univ. Grenoble Alpes, CNRS, Inria, Grenoble INP, LIG, 38000 Grenoble, France.}

\subjclass[2020]{Primary 90C25, 90C15; secondary 68Q32, 68T05.}
\keywords{%
Universal methods;
dimension-freeness;
dual extrapolation;
rate interpolation.}

\newacro{LHS}{left-hand side}
\newacro{RHS}{right-hand side}
\newacro{iid}[i.i.d.]{independent and identically distributed}
\newacro{lsc}[l.s.c.]{lower semi-continuous}

\newacro{undergrad}[\undergrad]{universal dual extrapolation with reweighted gradients}
\newacro{SFO}{stochastic first-order oracle}
\newacro{EGD}{entropic gradient descent}
\newacro{EG}{extra-gradient}
\newacro{MD}{mirror descent}
\newacro{MP}{mirror-prox}
\newacro{DE}{dual extrapolation}
\newacro{VI}{variational inequality}
\newacroplural{VI}[VIs]{variational inequalities}
\newacro{ACSA}[AC-SA]{accelerated stochastic approximation}
\newacro{UPGD}{universal primal gradient descent}

\begin{abstract}

Universal methods for optimization are designed to achieve theoretically optimal convergence rates without any prior knowledge of the problem's regularity parameters or the accurarcy of the gradient oracle employed by the optimizer.
In this regard, existing state-of-the-art algorithms achieve an $\bigoh(1/\nRuns^{2})$ value convergence rate in Lipschitz smooth problems with a perfect gradient oracle, and an $\bigoh(1/\sqrt{\nRuns})$ convergence rate when the underlying problem is non-smooth and/or the gradient oracle is stochastic.
On the downside, these methods do not take into account the problem's dimensionality, and this can have a catastrophic impact on the achieved convergence rate, in both theory and practice.
Our paper aims to bridge this gap by providing a scalable universal gradient method \textendash\ dubbed \acs{undergrad} \textendash\ whose oracle complexity is almost dimension-free in problems with a favorable geometry (like the simplex, linearly constrained semidefinite programs and combinatorial bandits), while retaining the order-optimal dependence on $\nRuns$ described above.
These ``best-of-both-worlds'' results are achieved via a primal-dual update scheme inspired by the dual exploration method for variational inequalities.
\end{abstract}

\maketitle
\allowdisplaybreaks		
\acresetall		

\section{Introduction}
\label{sec:intro}


The analysis of first-order methods for convex minimization typically revolves around the following regularity conditions:
\begin{enumerate*}
[\itshape a\upshape)]
\item
Lipschitz continuity of a problem's objective function
and\,/\,or
\item
Lipschitz continuity of the objective's gradients.
\end{enumerate*}
Depending on these conditions and the quality of the gradient oracle available to the optimizer, the optimal convergence rates that can be obtained by an iterative first-order algorithm after $\nRuns$ oracle queries are:
\begin{enumerate}
\item
$\bigof[\big]{\diampoints\sqrt{(\gbound^{2} + \sdev^{2})/\nRuns}}$
if the problem's objective is $\gbound$-Lipschitz continuous and the oracle's variance is $\sdev$.
\item
$\bigof[\big]{\lips\diampoints^{2}/\nRuns^{2} + \sdev\diampoints/\sqrt{\nRuns}}$
if the objective is $\lips$-Lipschitz smooth.
\end{enumerate}
[In both cases, $\diampoints \defeq \sup_{\point,\pointalt\in\points} \norm{\pointalt - \point}$ denotes the diameter of the problem's domain $\points \subseteq \R^{\vdim}$;
for an in-depth treatment, see \cite{Nes04,Bub15} and references therein.]

This stark separation of black-box guarantees has led to an intense search for \emph{universal} methods that are capable of interpolating smoothly between these rates without any prior knowledge of the problem's regularity properties or the oracle's noise profile.
As far as we are aware, the first algorithm with order-optimal rate guarantees for unconstrained problems and no knowledge of the problem's smoothness parameters was the \accelegrad proposal of \citet{LYC18}.
Subsequently, in the context of \emph{constrained} convex problems (the focus of our work), \citet{KLBC19} combined the \acl{EG}\,/\,\acl{MP} algorithmic template of \citet{Kor76} and \citet{Nem04} with an ``iterate averaging'' scheme introduced by \citet{Cut19} to change the query structure of the base algorithm and make it more amenable to acceleration.
In this way, \citet{KLBC19} obtained a universal \acl{EG} algorithm \textendash\ dubbed \unixgrad \textendash\ which interpolates between the optimal rates mentioned above, without requiring any tuning.

\para{Our contributions}

The starting point of our paper is the observation that, even though the rates in question are optimal in $\nRuns$, they may be highly supoptimal in $\vdim$, the problem's dimensionality.
For example, if the noise in the oracle has unit variance, $\sdev$ would scale as $\bigoh(\sqrt{\vdim})$;
this represents a hidden dependence on $\vdim$ which could have a catastrophic impact on the method's actual convergence rate.
Likewise, in problems with a favorable geometry (like the $L^{1}$-ball, trace-constrained semidefinite programs, combinatorial bandits, etc.), methods based on the \acl{MD} \cite{NY83} and \acl{MP} \cite{Nem04} templates can achieve rates with a \emph{logarithmic} (instead of polynomial) dependence on $\vdim$.

Importantly, the \unixgrad algorithm of \citet{KLBC19} is itself based on the \acl{MP} blueprint, so it would seem ideally suited to achieve convergence rates that are simultaneously optimal in $\nRuns$ and $\vdim$.
However, the method's guarantees depend crucially on the \emph{Bregman diameter} of the problem's domain, a quantity which becomes infinite when the method is used with a regularization setup that can lead to almost dimension-free guarantees.
This would seem to suggest that universality comes at the cost of scalability, leading to the following open question:
\begin{center}
\itshape
Is it possible to achieve almost dimension-free convergence rates\\
while retaining an order-optimal dependence on $\nRuns$?
\end{center}

In this paper, we develop a novel adaptive algorithm, which we call \acdef{undergrad}, and which provides a positive answer to this question.
Specifically, the value convergence rate of \ac{undergrad} scales in terms of $\gbound$, $\sdev$, $\lips$ and $\nRuns$ as:
\begin{enumerate}
\item
$\bigof[\big]{\sqrt{\range(\gbound^{2} + \sdev^{2})/\nRuns}}$ in non-smooth problems.
\item
$\bigof[\big]{\range\lips/\nRuns^{2} + \sdev\sqrt{\range/\nRuns}}$ in smooth problems.
\end{enumerate}
In the above, the method's ``range parameter'' $\range$ scales as $\bigoh(\diampoints^{2})$ in the worst case and as $\bigoh(\log\vdim)$ in problems with a favorable geometry \textendash\ that is, in problems where it is possible to attain almost dimension-free convergence rates \citep{Nes04,Bub15}.
In this regard, \ac{undergrad} seems to be the first method in the literature that concurrently achieves order-optimal rates in both $\nRuns$ and $\vdim$, without any prior knowledge on the problem's level of smoothness.

To achieve this result, the \ac{undergrad} algorithm combines the following basic ingredients:
\begin{enumerate}
\item
A modified version of the \acl{DE} method of \citet{Nes07} for solving \aclp{VI}.
\item
A gradient ``reweighting'' scheme that allows gradients to enter the algorithm with \emph{increasing} weights.
\item
An iterate averaging scheme in the spirit of \citet{Cut19} which allows us to obtain an accelerated rate of convergence by means of an online-to-batch conversion.
\end{enumerate}
The glue that holds these elements together is an adaptive learning rate inspired by \citet{RS13-COLT,RS13-NIPS} which automatically rescales aggregated gradients by 
\begin{enumerate*}
[\itshape a\upshape)]
\item
a small, constant amount when the method approaches a solution where gradient differences vanish (as in the smooth, deterministic case);
and
\item
a factor of $\bigoh(\sqrt{\nRuns})$ otherwise (thus providing the desired interpolation between smooth and non-smooth problems).
\end{enumerate*}
In so doing, the proposed policy achieves the correct step-size scaling and achieves the desired optimal rates.

\para{Related work}

The term ``universality'' was coined by \citet{Nes15} whose \acdef{UPGD} algorithm interpolates between the $\bigoh(1/\nRuns^{2})$ and $\bigoh(1/\sqrt{\nRuns})$ rates for smooth and non-smooth problems respectively (assuming access to noiseless gradients in both cases).
On the downside, \ac{UPGD} relies on an Armijo-like line search to interpolate between smooth and non-smooth objectives, so it is not applicable to stochastic environments.

A partial work-around to this issue was achieved by the \ac{ACSA} algorithm of \citet{Lan12} which uses a \acl{MD} template and guarantees order-optimal rates for both noisy \emph{and} noiseless oracles.
However, to attain these rates, the \ac{ACSA} algorithm requires a precise estimate of the smoothness modulus of the problem's objective, so it is not universal in this respect.
Subsequent works on the topic have focused on attaining universal guarantees for composite problems \citep{JRGS20}, non-convex objectives \cite{LO19,WWB19}, preconditioned methods \citep{JRGS20,ENV21}, non-Lipschitz settings \citep{ABM19,AM21,ABM21}, specific applications \citep{VAM21}, or \aclp{VI} / min-max problems \citep{BL19,ABM21,HAM21,APKM+21}.

Of the generalist works above, some employ a Bregman regularization setup \citep{AM21,BL19}, but the guarantees obtained therein either fall short of an accelerated $\bigoh(1/\nRuns^{2})$ convergence rate for Lipschitz smooth problems, or they depend on the problem's Bregman diameter \textendash\ so they cannot be associated with a Bregman setup leading to almost dimension-free convergence rate guarantees.
To the best of our knowledge, \ac{undergrad} is the first method that manages to combine the ``best of both worlds'' in terms of universality with respect to $\nRuns$ and scalability with respect to $\vdim$.


%
%

\section{Preliminaries}
\label{sec:prelims}


\subsection{Notation and basic definitions}

Let $\vecspace$ be a $\vdim$-dimensional space with norm $\norm{\cdot}$.
In what follows, we will write
$\dpoints \equiv \dspace$ for the dual of $\vecspace$,
$\braket{\dpoint}{\point}$ for the pairing between $\dpoint\in\dpoints$ and $\point\in\vecspace$,
and
$\dnorm{\dpoint} \equiv \sup\setdef{\braket{\dpoint}{\point}}{\norm{\point} \leq 1}$ for the dual norm on $\dpoints$.
Given an extended-real-valued convex function $\obj\from\vecspace\to\R\cup\{\infty\}$, we will write
$\dom\obj \equiv \setdef{\point\in\vecspace}{\obj(\point) < \infty}$ for its \emph{effective domain}
and
\(
\subd\obj(\point)
	\equiv \setdef
		{\dpoint\in\dpoints}
		{\obj(\pointalt) - \obj(\point) - \braket{\dpoint}{\pointalt - \point}\geq 0\;\text{for all $\pointalt\in\vecspace$}}
\)
for the \emph{subdifferential} of $\obj$ at $\point\in\dom\obj$.
Any element $\gvec\in\subd\obj(\point)$ will be called a \emph{subgradient} of $\obj$ at $\point$, and we will write $\dom\subd\obj \equiv \setdef{\point\in\dom\obj}{\subd\obj \neq \varnothing}$ for the \emph{domain of subdifferentiability} of $\obj$.

\subsection{Problem setup and blanket assumptions}

The main focus of our paper is the solution of convex minimization problems of the form
\begin{equation}
\label{eq:opt}
\tag{Opt}
\begin{aligned}
\textrm{minimize}
	&\quad
	\obj(\point)
	\\
\textrm{subject to}
	&\quad
	\point\in\points
\end{aligned}
\end{equation}
where
$\points$ is a closed convex subset of $\vecspace$
and
$\obj\from\vecspace\to\R\cup\{\infty\}$ is a convex function with $\dom\obj = \dom\subd\obj = \points$.
To avoid trivialities, we will assume throughout that the solution set $\sols \defeq \argmin\obj$ of \eqref{eq:opt} is non-empty, and we will write $\sol$ for a generic minimizer of $\obj$.

Other than this blanket assumption, our main reqularity requirements for $\obj$ will be as follows:
\begin{enumerate}
\item
\emph{Lipschitz continuity:}
\begin{equation}
\label{eq:LC}
\tag{LC}
\abs{\obj(\pointalt) - \obj(\point)}
	\leq \gbound \norm{\pointalt - \point}
\end{equation}
for some $\gbound\geq0$ and for all $\point,\pointalt\in\points$.
\item
\emph{Lipschitz smoothness:}
\begin{equation}
\label{eq:LS}
\tag{LS}
\obj(\pointalt)
	\leq \obj(\point)
		+ \braket{\nabla\obj(\point)}{\pointalt - \point}
		+ \frac{\lips}{2} \norm{\pointalt - \point}^{2}
\end{equation}
for some $\lips\geq0$ and for all $\gvec\in\subd\obj(\point)$, $\point,\pointalt\in\points$.
\end{enumerate}

Since $\dom\subd\obj = \points$, the above requirements are respectively equivalent to assuming
that $\obj$ admits a selection of subgradients $\nabla\obj(\point) \in \subd\obj(\point)$ with the properties below:
\begin{enumerate}
\item
\emph{Bounded \textpar{sub}gradient selection:}
\begin{equation}
\label{eq:BG}
\tag{BG}
\dnorm{\nabla\obj(\point)}
	\leq \gbound
\end{equation}
for some $\gbound\geq0$ and for all $\point\in\points$.
\item
\emph{Lipschitz \textpar{sub}gradient selection:}
\begin{equation}
\label{eq:LG}
\tag{LG}
\dnorm{\nabla\obj(\pointalt) - \nabla\obj(\point)}
	\leq \lips \norm{\pointalt - \point}
\end{equation}
for some $\lips\geq0$ and for all $\point,\pointalt\in\points$.
\end{enumerate}
In the rest of our paper, we will assume that $\obj$ satisfies at least one of \eqref{eq:BG} or \eqref{eq:LG}.

\begin{remark}
For posterity, we note here that the requirement \eqref{eq:LG} \emph{does not} imply that $\subd\obj(\point)$ is a singleton.%
\footnote{Consider for example the case of $\obj(\point) = \point$ for $\point\in[0,1]$ and $\obj(\point) = \infty$ otherwise:
$\obj$ clearly satisfies \eqref{eq:BG}/\eqref{eq:LS}, even though its $\subd\obj(0)$ and $\subd\obj(1)$ are infinite sets.}
In any case, the directional derivative $\obj'(\point;\tvec) = d/dt |_{t=0} \obj(\point + t\tvec)$ of $\obj$ at $\point\in\points$ along $\tvec\in\vecspace$ exists and is equal to $\braket{\nabla\obj(\point)}{\tvec}$ for all vectors of the form $\tvec = \pointalt - \point$, $\pointalt\in\points$.
We will use this fact freely in the sequel.
\endenv
\end{remark}

\subsection{The oracle model}

To solve \eqref{eq:opt}, we will consider iterative methods and algorithms with access to a \acdef{SFO}, \ie a black-box device that returns a (possibly random) estimate of a subgradient of $\obj$ at the point at which it was queried.
Formally, following \citet{Nes04}, an \ac{SFO} for $\obj$ is a measurable function $\orcl\from\points\times\samples \to \dpoints$ such that
\begin{equation}
\label{eq:SFO}
\tag{SFO}
\exof{\orcl(\point;\sample)}
	= \nabla\obj(\point)
	\quad
	\text{for all $\point\in\points$}
\end{equation}
where
$\probspace$ is a complete probability space
and
$\nabla\obj(\point)$ is a selection of subgradients of $\obj$ as per \eqref{eq:BG}/\eqref{eq:LG}.
The oracle's statistical precision will then be measured by the associated \emph{noise level} $\sdev \defeq \ess\sup_{\sample,\point} \dnorm{\orcl(\point;\sample) - \nabla\obj(\point)}$ (assumed finite).
In particular, if $\sdev=0$, $\orcl$ will be called \emph{perfect} (or \emph{deterministic});
otherwise, $\orcl$ will be called \emph{noisy}.

In practice, the oracle is called repeatedly at a sequence of query points $\curr[\query]$ with a different random seed $\curr[\sample]$ drawn according to $\prob$ at each time.%
\footnote{In the sequel, $\run$ may take both integer and half-integer values.}
In this way, at the $\run$-th query to \eqref{eq:SFO}, the oracle $\orcl$ returns the gradient signal
\begin{equation}
\label{eq:signal}
\curr[\signal]
	= \orcl(\curr[\query];\curr[\sample])
	= \nabla\obj(\curr[\query]) + \curr[\noise]
\end{equation}
where $\curr[\noise]$ denotes the ``gradient noise'' of the oracle (obviously, $\curr[\noise] \equiv 0$ if the oracle is perfect).
For measurability purposes, we will write $\curr[\filter]$ for the history (adapted filtration) of $\curr[\query]$, so $\curr[\query]$ is $\curr[\filter]$-measurable (by definition) but $\curr[\sample]$, $\curr[\signal]$ and $\curr[\noise]$ are not.
In particular, conditioning on $\curr[\filter]$, we have $\exof{\curr[\signal] \given \curr[\filter]} = \nabla\obj(\curr[\query])$ and $\exof{\curr[\noise] \given \curr[\filter]} = 0$, justifying in this way the terminology ``gradient noise'' for $\curr[\noise]$.

\begin{remark}
The oracle model detailed above is not the only one possible, but it is very widely used in the analysis of parameter-agnostic and adaptive methods, \cf \cite{LYC18,KLBC19,WWB19,AM21} and references therein.
In view of this, we will not examine either finer or coarser assumptions for \eqref{eq:SFO}.
\endenv
\end{remark}

We close this section by noting that the best convergence rates that can be achieved by an iterative algorithm that outputs a candidate solution $\last[\out] \in \points$ after $\nRuns$ queries to \eqref{eq:SFO} are:%
\footnote{%
In general, the query and output points \textendash\ $\last[\query]$ and $\last[\out]$ respectively \textendash\ need not coincide, hence the different notation.
The only assumption for the rates provided below is that the output point $\last[\out]$ is an affine combination of $\init[\query],\init[\signal],\dotsc,\last[\query],\last[\signal]$ \citep{Nes04,Bub15}.}
\begin{enumerate}
\item
$\obj(\last[\out]) - \min\obj = \bigoh(1/\sqrt{\nRuns})$ if $\obj$ satisfies \eqref{eq:BG} and $\orcl$ is deterministic.

\item
$\obj(\last[\out]) - \min\obj = \bigoh(1/\nRuns^{2})$ if $\obj$ satisfies \eqref{eq:LG} and $\orcl$ is deterministic.

\item
$\exof{\obj(\last[\out]) - \min\obj} = \bigoh(1/\sqrt{\nRuns})$ if $\orcl$ is stochastic.
\end{enumerate}
In general, without finer assumptions on $\obj$ or $\orcl$, the dependence of these rates on $\nRuns$ cannot be improved  \citep{Nes04,Bub15};
we will revisit this issue several times in the sequel.

\section{Regularization, universality, and the curse of dimensionality}
\label{sec:base}

To set the stage for the analysis to come, we discuss below the properties of two algorithmic frameworks \textendash\ one non-adaptive, the other adaptive \textendash\ based on the \acl{MP} template \citep{Nem04}.
Our aim in doing this will be to set a baseline for the sequel as well as to explore the impact of the problem's dimensionality on the attained rates of convergence.

\subsection{Motivating examples}
\label{sec:examples}

As a first step, we present three archetypal problems to motivate and illustrate the general setup that follows.


\begin{example}
[Resource allocation]
\label{ex:simplex}
Consider a set of computing resources (GPUs in a cluster, servers in a computing grid, \dots) indexed by $\resource \in \resources = \{1,\dotsc,\nResources\}$.
Each resource is capable of serving a stream of computing demands that arrive at a rate of $\inflow$ units per time:
if the optimizer assigns a load of $\load_{\resource}\geq0$ to the $\resource$-th resource, the marginal cost incurred is $\cost_{\resource}(\load_{\resource})$ per unit served, where $\cost_{\resource}\from[0,\inflow]\to\R_{+}$ is the cost function of the $\resource$-th resource (assumed convex, differentiable, and increasing in $\load_{\resource}$).
Taking $\inflow=1$ for simplicity, the goal of the optimizer is to minimize the aggregate cost $\obj(\load) = \sum_{\resource=1}^{\nResources} \load_{\resource} \cost_{\resource}(\load_{\resource})$, leading to a convex minimization problem over the unit $\nResources$-dimensional simplex $\points = \simplex(\resources) = \setdef{\load\in\R_{+}^{\nResources}}{\sum_{\resource}\load_{\resource} = 1}$.
\endenv
\end{example}



\begin{example}
[Input covariance matrix optimization]
\label{ex:spectron}
Consider a Gaussian vector channel in the spirit of \cite{Tel99,YRBC04}:
the encoder controls the covariance matrix $\covmat = \exof{\bx\bx^{\dag}}$ of the Gaussian input signal $\bx \in \C^{\nInput}$ and seeks to maximize the transfer rate of the output signal $\by = \chanmat \bx + \bz$, where $\bz\in\C^{\nOutput}$ is the ambient noise in the channel and $\chanmat \in\C^{\nOutput\times\nInput}$ is the channel's transfer matrix.
By the Shannon\textendash Telatar formula \citep{Tel99}, this boils down to maximizing the capacity function
\begin{equation}
\label{eq:capa}
\capa(\covmat)
	= \exof*{\log\det\parens*{\eye + \chanmat \covmat \chanmat^{\dag}}}
\end{equation}
subject to the constraint $\tr(\covmat) \leq \power$, where $\power$ denotes the encoder's maximum input power and the expectation in \eqref{eq:capa} is taken over the statistics of the (possibly deterministic) matrix $\chanmat$.
Since $\capa$ is concave in $\covmat$ \citep{BV04,YRBC04}, we obtain a minimization problem of the form \eqref{eq:opt} over the spectrahedron
\(
\spectron
	= \setdef{\covmat \mgeq 0}{\tr(\covmat) \leq \power}.
\)
Since $\covmat$ is Hermitian, $\spectron$ can be seen as a convex body of $\R^{\vdim}$ where $\vdim = \nInput^{2}$;
in the optimization literature, this is sometimes referred to as the ``spectrahedron setup'' \citep{JNT11}.
\endenv
\end{example}



\begin{example}
[Combinatorial bandits]
\label{ex:comband}
In bandit linear optimization problems, the optimizer is given a finite set of $\nPures$ possible \emph{actions} $\pures \subseteq \{0,1\}^{\vdim}$, \ie each action $\pure\in\pures$ is a $\vdim$-dimensional binary vector indicating whether the $\coord$-th component is ``on'' or ``off''.
The optimizer then chooses an action $\pure\in\pures$ based on a mixed strategy $\strat\in\simplex(\pures)$ and incurs the mean loss
\begin{equation}
\loss(\strat;\lossv)
	= \exof*{\insum_{\pure\in\pures} \strat_{\pure} \braket{\pure}{\lossv}}
\end{equation}
where $\lossv$ is a random vector with values in $[0,1]^{\vdim}$ (but otherwise unknown distribution).
In many cases of interest \textendash\ such as slate recommendation and shortest-path problems \textendash\ the cardinality of $\pures$ is exponential in $\vdim$, so it is computationally prohibitive to state the resulting minimization problem in terms of $\strat$.
Instead, writing $\point_{\coord} = \sum_{\pure\in\pures} \strat_{\pure} \pure_{\coord}$ for the probability of the $\coord$-th component being ``on'' under $\strat$, the optimizer's objective can be rewritten more compactly as
\(
\obj(\point)
		= \exof*{\braket{\point}{\lossv}}
\)
with $\point$ constrained to lie on the $\vdim$-dimensional convex hull $\points = \conv(\pures)$ of $\pures$ in $\R^{\vdim}$.
In the literature on multi-armed bandits, this setup is known as a \emph{combinatorial bandit};
for an in-depth treatment, see \citep{LS20,CBL12,GLLO07,CBL06} and the many references cited therein.
\endenv
\end{example}


\Cref{ex:simplex,ex:spectron,ex:comband} all suffer from the ``curse of dimensionality'':
for instance, the dimensionality of a vector Gaussian channel with $\nInput = 256$ input entries is $\vdim \approx 6.5\times 10^{4}$, while a combinatorial bandit for recommendation systems may have upwards of several million arms.
Nonetheless, these examples also share a number of geometric properties that make it possible to design scalable optimization algorithms with (almost) dimension-free convergence rate guarantees.
We elaborate on this in the next section.

\subsection{The \acl{MP} template}
\label{sec:MP}

We begin by considering the well-known \acdef{MP} method of \citet{Nem04}.
Following \cite{JNT11,MLZF+19}, this is defined via the recursion
\begin{equation}
\label{eq:MP}
\tag{MP}
\begin{aligned}
\lead
	&= \proxof{\curr}{-\curr[\stepalt] \curr[\signal]}
	\\
\next
	&= \proxof{\curr}{-\curr[\stepalt] \lead[\signal]}
\end{aligned}
\end{equation}
where
\begin{enumerate}
\item
$\run=\running$ denotes the method's iteration counter (for the origins of the half-integer notation, see \citet{FP03} and references therein).
\item
$\curr[\stepalt] > 0$ is the algorithm's step-size sequence.
\item
$\curr[\signal]$ and $\lead[\signal]$ are stochastic gradients of $\obj$ obtained by querying the oracle $\orcl$ at $\curr$ and $\lead$ respectively.
\item
$\proxof{\curr}{\cdot}$ is a generalized projection operator known as the method's ``prox-mapping'' (more on this later).
\end{enumerate}


The most elementary instance of \eqref{eq:MP} is the \acdef{EG} algorithm of \citet{Kor76}, in which case the method's prox-mapping is the Euclidean projector
\begin{equation}
\label{eq:Eucl}
\proxof{\point}{\dpoint}
	= \Eucl_{\points}(\point+\dpoint)
	\defeq \argmin\nolimits_{\pointalt\in\points} \twonorm{\point + \dpoint - \pointalt}
\end{equation}
for all $\point\in\points$, $\dpoint\in\dpoints$.
More generally, the prox-mapping in \eqref{eq:MP} is defined in terms of a \emph{Bregman regularizer} as follows:


\begin{table*}[t]
\footnotesize
\renewcommand{\arraystretch}{1.2}
\addtolength{\tabcolsep}{-.4em}
\centering

\begin{tabular}{lcccccc}
\toprule
	&\bfseries Domain ($\points$)
	&\bfseries Breg. Diam. ($\bregdiam$)
	&\bfseries Range ($\range$)
	&\bfseries Shape ($\shape$)
	&\bfseries Rate ($\lips = \infty$)
	&\bfseries Rate ($\lips<\infty$, $\sdev=0$)
	\\
\midrule
\scshape
Euclidean
	&any below
	&$\bigoh(1)$
	&$\bigoh(1)$
	&$\sqrt{\vdim}$
	&$\bigof[\big]{\sqrt{\vdim / \nRuns}}$
	&$\bigof{\vdim / \nRuns}$
	\\
\scshape
Entropic
	&simplex
	&$\infty$
	&$\log\vdim$
	&$1$
	&$\bigof[\big]{\sqrt{\log\vdim / \nRuns}}$
	&$\bigof{\log\vdim / \nRuns}$
	\\
\scshape
Quantum
	&spectrahedron
	&$\infty$
	&$\log\vdim$
	&$1$
	&$\bigof[\big]{\sqrt{\log\vdim / \nRuns}}$
	&$\bigof{\log\vdim / \nRuns}$
	\\
\scshape
ComBand
	&$\conv(\pures)$
	&$\infty$
	&$\bigoh(\log\vdim)$
	&$1$
	&$\bigof[\big]{\sqrt{\log\vdim / \nRuns}}$
	&$\bigof{\log\vdim / \nRuns}$
	\\
\bottomrule
\end{tabular}
\caption{%
The convergence rate of \eqref{eq:MP} in terms of $\vdim$ and $\nRuns$ for different regularizers.
In the combinatorial setup of \cref{ex:comband}, the unnormalized entropy has $\range = \maxsel(1 + \log(\vdim/\maxsel))$, where $\maxsel = \max_{\pure\in\pures} \onenorm{\pure}$ is the maximum number of elements of $\{1,\dotsc,\vdim\}$ that can be simultaneously ``on'' \citep[Chap.~30]{LS20}.
In many applications, $\maxsel$ does not scale with $\vdim$, so it has been absorbed in the $\bigoh(\cdot)$ notation;
other than that, $\bigoh(\cdot)$ contains only universal constants.
}
\label{tab:rates}
\vspace{-1ex}
\end{table*}


\begin{definition}
\label{def:Bregman}
A \emph{Bregman regularizer} on $\points$ is a convex function $\hreg\from\vecspace\to\R\cup\{\infty\}$
such that
\begin{enumerate}
\item
$\dom\hreg = \points$ and $\hreg$ is continuous on $\points$.
\item
The subdifferential of $\hreg$ admits a \emph{continuous selection}, \ie there exists a continuous mapping $\subsel\hreg\from\dom\subd\hreg\to\dpoints$ with $\subsel\hreg(\point) \in \subd\hreg(\point)$ for all $\point\in\dom\subd\hreg$.
\item
$\hreg$ is \emph{strongly convex} on $\points$, \ie
\begin{equation}
\hreg(\pointalt)
	\geq \hreg(\point)
		+ \braket{\subsel\hreg(\point)}{\pointalt - \point}
		+ \tfrac{1}{2} \hstr \norm{\pointalt - \point}^{2}
\end{equation}
for some $\hstr>0$ and all $\point\in\dom\subd\hreg$, $\pointalt\in\points$.
\end{enumerate}
\noindent
We also define the \emph{Bregman divergence} of $\hreg$ as
\begin{equation}
\label{eq:Breg}
\breg(\pointalt,\point)
	= \hreg(\pointalt)
		- \hreg(\point)
		- \braket{\subsel\hreg(\point)}{\pointalt - \point}
\end{equation}
and the induced \emph{prox-mapping} as
\begin{equation}
\label{eq:prox}
\proxof{\point}{\dpoint}
	= \argmin\nolimits_{\pointalt\in\points} \{ \braket{\dpoint}{\point - \pointalt} + \breg(\pointalt,\point) \}
\end{equation}
for all $\point\in\proxdom$, $\pointalt\in\points$ and all $\dpoint\in\dpoints$.
\end{definition}

\begin{remark*}
The set $\proxdom \defeq \dom\subd\hreg$ is often referred to as the \emph{prox-domain} of $\hreg$;
by standard results in convex analysis, we have $\relint\points \subseteq \proxdom \subseteq \points$ \citep[Chap.~26]{Roc70}.
\end{remark*}

In terms of output, the candidate solution returned by \eqref{eq:MP} after $\nRuns$ iterations is the so-called ``ergodic average''
\begin{equation}
\last[\stateavg]
	= \frac
		{\sum_{\run=\start}^{\nRuns} \curr[\stepalt] \lead}
		{\sum_{\run=\start}^{\nRuns} \curr[\stepalt]}.
\end{equation}
Then, assuming the method's step-size $\curr[\stepalt]$ is chosen appropriately (more on this below), $\last[\stateavg]$ enjoys the following guarantees \citep{JNT11,RS13-NIPS}:
\begin{subequations}
\label{eq:rate-MP}
\begin{enumerate}[\itshape a\upshape)]
\item
If $\obj$ satisfies \eqref{eq:BG}, then
\begin{align}
\label{eq:rate-MP-BG}
\exof{\obj(\last[\stateavg]) - \min\obj}
	&= \bigof*{\sqrt{\frac{\gbound^{2} + \sdev^{2}}{\hstr} \frac{\init[\breg]}{\nRuns}}}
\shortintertext{%
\item
If $\obj$ satisfies \eqref{eq:LG}, then}
\label{eq:rate-MP-LG}
\exof{\obj(\last[\stateavg]) - \min\obj}
	&= \bigof*{\frac{\lips \init[\breg]}{\hstr\nRuns} + \sdev\sqrt{\frac{\init[\breg]}{\hstr\nRuns}}}
\end{align}
\end{enumerate}
\end{subequations}

In the above, $\init[\breg] = \breg(\sol,\init)$ is the minimum Bregman divergence between a solution $\sol$ of \eqref{eq:opt} and the initial state $\init$ of \eqref{eq:MP}.
In particular, if \eqref{eq:MP} is initialized at the \emph{prox-center} $\point_{c} = \argmin\hreg$ of $\points$, we have
\begin{equation}
\init[\breg]
	\leq \hreg(\sol) - \min\hreg
	\leq \max\hreg - \min\hreg
	\eqdef \range.
\end{equation}
We will refer to $\range = \max\hreg - \min\hreg$ as the \emph{range} of $\hreg$.

To quantify the interplay betwen the problem's dimensionality and the rate guarantees \eqref{eq:rate-MP} for \eqref{eq:MP}, it will be convenient to introduce the normalized regularity parameters
\begin{equation}
\gbound_{\hreg}
	= \frac{\gbound}{\sqrt{\hstr}}
	\quad
\lips_{\hreg}
	= \frac{\lips}{\hstr}
	\quad
	\text{and}
	\quad
\sdev_{\hreg}
	= \frac{\sdev}{\sqrt{\hstr}}
\end{equation}
and the associated \emph{shape factor}
\begin{equation}
\label{eq:shape}
\shape
	= \begin{cases}
		\sqrt{\gbound_{\hreg}^{2} + \sdev_{\hreg}^{2}}
			&\quad
			\text{if $\lips = \infty$,}
			\\
		\sqrt{\lips_{\hreg}}
			&\quad
			\text{if $\lips<\infty$ and $\sdev=0$,}
			\\
		\sdev_{\hreg}
			&\quad
			\text{if $\lips<\infty$ and $\sdev>0$.}
	\end{cases}		
\end{equation}
Since at least one of the terms $\gbound/\sqrt{\hstr}$, $\lips/\hstr$ and $\sdev/\sqrt{\hstr}$ appears in \eqref{eq:rate-MP}, it follows that the leading term in $\nRuns$ scales as $\bigoh(\shape \sqrt{\init[\breg]/\nRuns})$ in non-smooth\,/\,stochastic environments, and as $\bigoh(\shape^{2} \init[\breg]/\nRuns)$ in smooth, deterministic problems.

The importance of the normalized parameters $\gbound_{\hreg}$, $\lips_{\hreg}$, $\sdev_{\hreg}$ and the shape factor $\shape$ lies in the fact that they do not depend on the ambient norm $\norm{\cdot}$ (a choice which, to a certain extent, is arbitrary).
Indeed, if $\norm{\cdot}$ and $\altnorm{\cdot}$ are two norms on $\vecspace$ that are related as $\norm{\cdot} \leq \mu \altnorm{\cdot}$ for some $\mu>0$, it is straightforward to verify that $\hreg$ is $(\mu^{2}\hstr)$-strongly convex relative to $\altnorm{\cdot}$.
Likewise, in terms of dual norms we have $\dnorm{\cdot} \geq (1/\mu) \altdnorm{\cdot}$, so the constants $\gbound$, $\sdev$ and $\lips$ would respectively become $\mu\gbound$, $\mu\sdev$ and $\mu^{2}\lips$ when computed under $\altnorm{\cdot}$.
In general, these inequalities are all tight, so a change in norm does not affect the shape factor $\shape$;
accordingly, any dependence of $\shape$ on $\vdim$ will be propagated verbatim to the guarantees \eqref{eq:rate-MP}.

In \cref{tab:rates}, we provide the values of $\range$ and $\shape$ for the following cases:
\begin{enumerate}
\item
The \emph{Euclidean regularizer} $\hreg(\point) = \twonorm{\point}^{2}/2$ that gives rise to the \acl{EG} algorithm \eqref{eq:Eucl}.
\item
The \emph{entropic regularizer} $\hreg(\point) = \sum_{\coord=1}^{\nResources} \point_{\coord} \log\point_{\coord}$ for the simplex setup of \cref{ex:simplex}.
\item
The \emph{von Neumann regularizer} $\hreg(\covmat) = \tr\parens{\covmat \log\covmat} + (1 - \tr\covmat) \log(1-\tr\covmat)$ for the spectrahedron setup of \cref{ex:spectron}.
\item
The \emph{unnormalized entropy} $\hreg(\point) = \sum_{\coord=1}^{\nResources} (\point_{\coord} \log\point_{\coord} - \point_{\coord})$ for the combinatorial setup of \cref{ex:comband}.
\end{enumerate}
These derivations are standard, so we omit the details.
For posterity, we only note that the logarithmic dependence on $\vdim$ is asymptotically optimal, \cf \citep{CBL06,BCB12} and references therein.

\subsection{The \unixgrad algorithm}
\label{sec:unixgrad}

As can be seen from \cref{tab:rates}, the \acl{MP} algorithm achieves an almost dimension-free rate of convergence when used with a suitable regularizer.
However, this comes with two important caveats:
First, the algorithm's rate in the smooth case falls short of the optimal $\bigoh(1/\nRuns^{2})$ dependence in $\nRuns$, so \eqref{eq:MP} is suboptimal in this regard.
Second, to achieve the rates presented in \cref{eq:rate-MP}, the algorithm's step-size $\curr[\stepalt]$ must be tuned with prior knowledge of the problem's parameters:
in particular, under \eqref{eq:BG}, the algorithm must be run with step-size $\curr[\stepalt] \propto 1 /\sqrt{(\gbound^{2} + \sdev^{2}) \nRuns}$
while,
under \eqref{eq:LG},
the algorithm requires $\curr[\stepalt] = \hstr/\lips$ if $\sdev=0$ and $\curr[\stepalt] \propto 1/(\sdev\sqrt{\nRuns})$ otherwise.
This creates an undesirable state of affairs because the parameters $\gbound$, $\lips$ and $\sdev$ are usually not known in advance, and \eqref{eq:MP} can \textendash\ and \emph{does} \textendash\ fail to converge if run with an untuned step-size.

In the rest of this section, we briefly discuss the \unixgrad algorithm of \citet{KLBC19} which expands on the \acl{MP} template in the following two crucial ways:
\begin{enumerate*}
[\itshape a\upshape)]
\item
it introduces an iterate-averaging mechanism in the spirit of \citet{Cut19} to enable acceleration;
and
\item
it employs an adaptive step-size policy that does not require any tuning by the optimizer.
\end{enumerate*}
In so doing, \unixgrad interpolates smoothly between the optimal convergence rates described in \cref{sec:prelims} without requiring any prior knowledge of $\gbound$, $\lips$ or $\sdev$.

Concretely, \unixgrad proceeds as \eqref{eq:MP}, but instead of querying $\orcl$ at $\curr$ and $\lead$, it introduces the weighted query states
\begin{equation}
\label{eq:averages}
\begin{aligned}
\curr[\stateavg]
	&= \frac
		{\curr[\step]\curr + \sum_{\runalt=\start}^{\run-1} \iter[\step] \iterlead}
		{\sum_{\runalt=\start}^{\run} \iter[\step]}
	\\
\lead[\stateavg]
	&= \frac
		{\curr[\step]\lead + \sum_{\runalt=\start}^{\run-1} \iter[\step] \iterlead}
		{\sum_{\runalt=\start}^{\run} \iter[\step]}
\end{aligned}
\end{equation}
where $\curr[\step]$ is a ``gradient weighting'' parameter.
Then, building on an idea by \citet{RS13-COLT,RS13-NIPS}, the oracle queries $\curr[\signal] \gets \orcl(\curr[\stateavg];\curr[\sample])$ and $\lead[\signal] \gets \orcl(\lead[\stateavg];\lead[\sample])$ are used to update the method's step-size as
\begin{equation}
\label{eq:step-unix}
\curr[\stepalt]
	= \frac
		{\bregdiam \curr[\step]}
		{\sqrt{1 + \sum_{\runalt=1}^{\run-1} \iter[\step]^{2} \dnorm{\iterlead[\signal] - \iter[\signal]}^{2}}}
\end{equation}
where
\begin{equation}
\label{eq:bregdiam}
\bregdiam
	= \sup\nolimits_{\point\in\points,\pointalt\in\proxdom}
		\sqrt{2 \bregof{\point}{\pointalt}}
\end{equation}
is the so-called \emph{Bregman diameter} of $\points$.

With all this in hand, \citet{KLBC19} provide the following bounds if \unixgrad is run with $\curr[\step] = \run$:
\begin{subequations}
\label{eq:rate-unix}
\begin{enumerate}[\itshape a\upshape)]
\item
If $\obj$ satisfies \eqref{eq:BG}, then
\begin{align}
\label{eq:rate-unix-BG}
\hspace{-1.7em}
\exof{\obj(\lastlead[\stateavg]) - \min\obj}
	&= \bigof[\bigg]{\frac{\bregdiam\sqrt{\gbound^{2} + \sdev^{2}}}{\sqrt{\hstr\nRuns}}}
\shortintertext{%
\item
If $\obj$ satisfies \eqref{eq:LG}, then}
\label{eq:rate-unix-LG}
\hspace{-1.7em}
\exof{\obj(\lastlead[\stateavg]) - \min\obj}
	&= \bigof*{\frac{\bregdiam^{2}\lips}{\hstr\nRuns^{2}} + \frac{\bregdiam \sdev}{\sqrt{\hstr\nRuns}}}
\end{align}
\end{enumerate}
\end{subequations}
As we mentioned in \cref{sec:prelims}, the bounds \eqref{eq:rate-unix} cannot be improved in terms of $\nRuns$ without further assumptions, so \unixgrad is universally optimal in this regard.

That being said, these guarantees also uncover an important limitation of \unixgrad,
namely
that the bounds \eqref{eq:rate-unix} become void when the method is used in conjunction with one of the non-Euclidean frameworks of \cref{ex:simplex,ex:spectron,ex:comband}.
For example, the Bregman diameter of the simplex under the entropic regularizer is $\bregdiam = \sup_{\point,\pointalt} \sum_{\coord} \point_{\coord} \log(\point_{\coord}/\pointalt_{\coord}) = \infty$, so the multiplicative constants in \eqref{eq:rate-unix} become infinite (and the bounds themselves become meaningless).
However, since the use of these regularizers is crucial to obtain the scalable, dimension-free convergence rates reported in \cref{tab:rates},
\footnote{In particular, since the shape factor of the Euclidean regularizer is $\shape = \sqrt{\vdim}$, employing \unixgrad with ordinary Euclidean projections would not lead to scalable guarantees.}
we are led to the open question we stated before:
\begin{center}
\itshape
Is it possible to achieve almost dimension-free convergence rates\\
while retaining an order-optimal dependence on $\nRuns$?
\end{center}
\vspace{-1ex}
We address this question in the next section.

\section{Universal dual extrapolation}
\label{sec:undergrad}

The point of departure of our analysis is the observation that gradient queries enter \eqref{eq:MP} with \emph{decreasing} weights.
Specifically, if \unixgrad is run with $\curr[\step] = \run$ (a choice which is necessary to have a shot at acceleration), the denominator of \eqref{eq:step-unix} may grow as fast as $\Theta(\run^{3/2})$ in the non-smooth/stochastic case, leading to an asymptotic $\bigoh(1/\sqrt{\run})$ worst-case behavior for $\curr[\stepalt]$.
In fact, even under the ansatz that the algorithm's query points converge to a minimizer of $\obj$ at an accelerated rate, the denominator of \eqref{eq:step-unix} may still grow as $\Theta(\run)$,
indicating that $\curr[\stepalt]$ will, at best, stabilize to a positive value as $\run\to\infty$.
This feature of the step-size rule \eqref{eq:step-unix} is somewhat counterintuitive because conventional wisdom would suggest that
\begin{enumerate*}
[\itshape a\upshape)]
\item
recent queries are more useful than older, potentially obsolete ones;
and
\item
gradients should be ``inflated'' as the method's query points approach a zero-gradient solution in order to maintain a fast rate of convergence.
\end{enumerate*}

The problem with a vanishing step-size becomes especially pronounced if the method is used with a non-Euclidean regularizer (which is what one would wish to do in order to obtain scalable convergence guarantees).
To see this, consider the iterates of the \acl{MP} template generated by the regularizer $\hreg(\point) = \point\log\point$ on $\points = [0,\infty)$.%
\footnote{Strictly speaking this regularizer is not strongly convex over $[0,\infty)$ but this detail is not relevant for the question at hand.}
In this case, the induced prox-mapping is $\proxof{\point}{\dpoint} = \point \exp(\dpoint)$, leading to the recursion
\begin{equation}
\new\point
	= \proxof{\point}{-\stepalt\dvec}
	= \point \exp(-\stepalt\dvec).
\end{equation}
Therefore, if the problem's objective function attains its minimum at $0$, the actual steps of the method scale as $\new\point-\point = \bigoh(\point)$ for small $\point$, so it is imperative to maintain a large step-size to avoid stalling the algorithm.

This scaling issue is at the heart of the \acdef{DE} method of \citet{Nes07}.
Originally designed to solve \aclp{VI} and related problems, the method proceeds by
\begin{enumerate*}
[(\itshape i\upshape\hspace*{1pt})]
\item
using a prox-step to generate the method's leading state and get a ``look-ahead'' gradient query;
\item
aggregating gradient information with a \emph{constant} weight;
and, finally,
\item
using a ``primal-dual'' mirror map to update the method's base state.
\end{enumerate*}
Formally, the algorithm follows the iterative update rule
\begin{equation}
\label{eq:DE}
\tag{DE}
\begin{aligned}
\lead
	&= \proxof{\curr}{-\curr[\stepalt] \curr[\signal]}
	\\
\next[\dstate]
	&= \curr[\dstate] - \lead[\signal]
	\\
\next
	&= \mirror(\next[\stepalt]\next[\dstate])
\end{aligned}
\end{equation}
where the so-called ``mirror map'' $\mirror\from\dpoints\to\points$ is defined as
\begin{equation}
\label{eq:mirror}
\mirror(\dpoint)
	= \argmax_{\point\in\points} \{ \braket{\dpoint}{\point} - \hreg(\point) \}.
\end{equation}

Unfortunately, the template \eqref{eq:DE} is not sufficient for our purposes, for two main reasons:
First, the method still couples a prox-step with a variable (decreasing) step-size update;
this is not problematic for the application of the method to \acsp{VI} (where the achievable rates are different), but it is not otherwise favorable for acceleration.

In addition to the above,
the method's gradient pre-multiplier is the same as its post-multiplier ($\curr[\stepalt]$ in both cases), and it is not possible to differentiate these parameters while maintaining optimal rates \citep{Nes07}.
However, this differentiation is essential for acceleration, especially when $\curr[\stepalt]$ cannot be tuned with prior knowledge of the problem's parameters.

Our approach to overcome this issue consists of:
\begin{enumerate*}
[\itshape a\upshape)]
\item
eliminating the prox-step altogether in favor of a mirror step;
and
\item
separating the weights used for introducing new gradients to the algorithm versus those used to generate the base and leading states.
\end{enumerate*}
To formalize this, we introduce below the ``universal'' \acl{DE} template:
\begin{equation}
\label{eq:UDE}
\tag{UDE}
\begin{alignedat}{2}
\lead[\dstate]
	&= \curr[\dstate] - \curr[\step] \curr[\signal]
	&\quad
\lead
	&= \mirror(\curr[\learn]\lead[\dstate])
	\\
\next[\dstate]
	&= \curr[\dstate] - \curr[\step] \lead[\signal]
	&\quad
\next
	&= \mirror(\next[\learn] \next[\dstate])
\end{alignedat}
\end{equation}
In the above, the gradient signals $\curr[\signal]$ and $\lead[\signal]$ are considered generic and the query points are not specified.
To get a concrete algorithm, we will use the weighting scheme of \citet{KLBC19} and query the oracle at the averaged states $\curr[\avg\state]$ and $\lead[\avg\state]$ introduced previously in \eqref{eq:averages}.
Finally, regarding the algorithm's gradient weighting and averaging parameters ($\curr[\step]$ and $\curr[\learn]$ respectively), we will use an \emph{increasing} weight for the method's step-size $\curr[\step] = \run$ and the adaptive rule
\begin{equation}
\label{eq:learn}
\curr[\learn]
	= \frac{\numer}{\sqrt{\denom + \sum_{\runalt=\start}^{\run-1} \iter[\step]^{2} \dnorm{\iterlead[\signal] - \iter[\signal]}^{2}}}
\end{equation}
for the method's learning rate (the parameters $\denomsqrt$ and $\numer$ are discussed below, and we are using the standard convention that empty sums are taken equal to zero).


\begin{algorithm}[tbp]
\DontPrintSemicolon
\small
\addtolength{\baselineskip}{2pt}

\textbf{Parameters}
	$\denomsqrt \gets \sqrt{\hstr}$;
	$\numer \gets \sqrt{\hstr(\range + \hstr \diampoints^{2})}$\;
\textbf{Initialize}
	$\init[\dstate] \gets 0$;
	$\init[\anchor] \gets 0$;
	$\init[S] \gets \denom$\;
	%
\For{$\run = \running,\nRuns$}{%
	$ \curr[\learn] \gets \numer /\sqrt{\curr[S]}$
	\tcp*{set learning rate}
	$\curr[\state] \gets \mirror \parens*{\curr[\learn] \curr[\dstate] }$
	\tcp*{mirror step}
	$\curr[\avg{\state}] \gets \parens*{\curr[\step] \curr[\state] + \curr[\anchor] } \big/ { \sum_{\runalt = 1}^{\run} \iter[\step] }$
	\tcp*{mixing}
	$\curr[\signal] \gets \orcl(\curr[\avg{\state}];\curr[\sample])$ 
	\tcp*{oracle query}
	$\lead[\dstate] \gets \curr[\dstate] - \curr[\step] \curr[\signal]$
	\tcp*{dual step}
	$\lead \gets \mirror \parens*{ \curr[\learn] \lead[\dstate]} $
	\tcp*{mirror step}
	$\lead[\avg\state] \gets \parens*{\curr[\step] \lead + \curr[\anchor] } \big/ { \sum_{\runalt = 1}^{\run} \iter[\step] }$
	\tcp*{mixing}
	$\lead[\signal] \gets \orcl(\lead[\avg\state];\lead[\sample])$
	\tcp*{oracle query}
	$\next[\dstate] \gets \curr[\dstate] - \curr[\step] \lead[\signal]$
	\tcp*{dual step}
	$\next[S] \gets \curr[S] + \curr[\step]^{2} \dnorm{\lead[\signal] - \curr[\signal]}^{2} $
	\tcp*{precondition}
	$\next[\anchor] \gets \curr[\anchor] + \curr[\step] \lead$
	\tcp*{update mixing state}
}

\Return{$\last[\out] \gets \lastlead[\avg\state]$}

\caption{\Acf{undergrad}}
\label{alg:undergrad}
\end{algorithm}


The resulting method \textendash\ which we call \acdef{undergrad} \textendash\ is encoded in pseudocode form in \cref{alg:undergrad} and represented schematically in \cref{fig:undergrad}.
Its main guarantees are as follows:

\begin{theorem}
\label{thm:undergrad}
Suppose that \ac{undergrad} \textpar{\cref{alg:undergrad}} is run for $\nRuns$ iterations with $\curr[\learn]$ given by \eqref{eq:learn},
$\curr[\step] = \run$ for all $\run=\running$,
and
$\denomsqrt = \sqrt{\hstr}$,
$\numer = \hconst \sqrt{\hstr}$ with $\hconst = \sqrt{\range + \hstr \diampoints^{2}}$.
Then the algorithm's output state $\last[\out] \equiv \lastlead[\avg\state]$ simultaneously enjoys the following guarantees:
\begin{subequations}
\label{eq:rate-under}
\begin{enumerate}[\itshape a\upshape)]
\item
If $\obj$ satisfies \eqref{eq:BG}, then
\begin{equation}
\label{eq:rate-under-BG}
\!\hspace{-1em}
\exof{\obj(\last[\out])}
	\leq \min\obj
		+ 2\hconst \sqrt{\frac{\hstr + 8(\gbound^{2}+\sdev^{2})}{\hstr\nRuns}}
\end{equation}
\item
If $\obj$ satisfies \eqref{eq:LG}, then
\begin{equation}
\label{eq:rate-under-LG}
\exof{\obj(\last[\out])}
	\leq \min\obj
		+ \frac{32\sqrt{2}\hconst^{2}\lips}{\hstr\nRuns^{2}}
		+ \frac{8\sqrt{2}\hconst \sdev}{\sqrt{\hstr\nRuns}}
\end{equation}
\end{enumerate}
\end{subequations}
\end{theorem}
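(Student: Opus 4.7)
The plan is a regret-to-rate conversion tailored to the universal dual extrapolation template, carried out in three stages.

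\textbf{Stage 1: Template inequality.} The first step is to establish a ``weighted regret'' bound for the \ac{undergrad} iterates of the form
\begin{equation*}
\sum_{\run=\start}^{\nRuns} \curr[\step]\braket{\lead[\signal]}{\lead - \base}
\;\leq\; \frac{\hreg(\base) - \min\hreg + \tfrac{1}{2}\hstr\diampoints^{2}}{\last[\learn]}
+ \frac{1}{\hstr}\sum_{\run=\start}^{\nRuns}\curr[\learn]\curr[\step]^{2}\dnorm{\lead[\signal]-\curr[\signal]}^{2}
- \frac{\hstr}{2}\sum_{\run=\start}^{\nRuns}\frac{\norm{\lead-\curr}^{2}}{\curr[\learn]},
\end{equation*}
valid for every $\base \in \points$. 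The key structural point is that, because \ac{undergrad} generates $\curr$ and $\lead$ through the mirror map $\mirror(\curr[\learn]\curr[\dstate])$ rather than by an iterative Bregman projection from $\curr$, the comparator reduces to $\hreg(\base)-\min\hreg \leq \range$ plus at most an $\bigof{\diampoints^{2}}$ quadratic correction, in lieu of the Bregman diameter $\bregdiam$ that drives the \unixgrad guarantees \eqref{eq:rate-unix}. This is precisely what enables the almost dimension-free constants in \cref{tab:rates}. The negative stability term $-\hstr\norm{\lead-\curr}^{2}/(2\curr[\learn])$ will be indispensable for the smooth self-bounding argument in Stage~3.

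\textbf{Stage 2: Online-to-batch via mixing.} Using the mixing states \eqref{eq:averages} and a telescoping identity in the spirit of \citet{Cut19,KLBC19}, convexity of $\obj$ yields
\begin{equation*}
\Big(\sum_{\run=\start}^{\nRuns}\curr[\step]\Big)\bigl(\obj(\lastlead[\avg\state]) - \obj(\sol)\bigr)
\;\leq\; \sum_{\run=\start}^{\nRuns}\curr[\step]\braket{\nabla\obj(\lead[\avg\state])}{\lead - \sol}.
\end{equation*}
Since $\lead[\signal] = \orcl(\lead[\avg\state];\lead[\sample])$ is an unbiased estimate of $\nabla\obj(\lead[\avg\state])$, taking expectations splits the right-hand side into (a)~the Stage~1 regret evaluated at $\base = \sol$ and (b)~a zero-mean martingale residual controlled by $\sdev$. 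With $\curr[\step] = \run$, we have $\sum_{\run=\start}^{\nRuns}\curr[\step] = \Theta(\nRuns^{2})$, so it suffices to bound the regret by $\bigoh(\nRuns^{3/2})$ in the non-smooth case and by $\bigoh(\nRuns)$ in the smooth case to recover \eqref{eq:rate-under-BG} and \eqref{eq:rate-under-LG} respectively.

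\textbf{Stage 3: Adaptive rate and self-bounding.} Applying the standard telescoping lemma $\sum_{\run} a_{\run}/\sqrt{\denom + \sum_{\runalt\leq\run} a_{\runalt}} \leq 2\sqrt{\denom + \sum_{\run} a_{\run}}$ to $a_{\run} = \curr[\step]^{2}\dnorm{\lead[\signal]-\curr[\signal]}^{2}$ collapses both $\range/\last[\learn]$ and the adaptive sum in Stage~1 into expressions of the form $\bigof[\big]{\hconst\sqrt{\denom + \sum_{\run}\curr[\step]^{2}\dnorm{\lead[\signal]-\curr[\signal]}^{2}}}$. Under \eqref{eq:BG}, the crude bound $\dnorm{\lead[\signal]-\curr[\signal]} \leq 2(\gbound+\sdev)$ forces an $\bigoh(\nRuns^{3/2})$ regret and yields \eqref{eq:rate-under-BG}. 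Under \eqref{eq:LG}, I would decompose $\dnorm{\lead[\signal]-\curr[\signal]} \leq \lips\norm{\lead-\curr} + \dnorm{\lead[\noise]} + \dnorm{\curr[\noise]}$, so the deterministic contribution is absorbed by the negative stability term from Stage~1 via a self-bounding inequality of the shape $R \leq A + \const\sqrt{R}$ (which forces $R = \bigoh(A + \const^{2})$), leaving only the $\bigoh(\sdev\sqrt{\nRuns})$ noise residual that produces \eqref{eq:rate-under-LG}.

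\textbf{Main obstacle.} The delicate core of the argument is Stage~1. The classical dual extrapolation analysis of \citet{Nes07} requires the gradient pre- and post-multipliers to coincide, whereas \ac{undergrad} deliberately decouples them into the increasing weight $\curr[\step] = \run$ and the adaptive rate $\curr[\learn]$. Producing a three-point-type inequality for the mirror map $\mirror$ that simultaneously (i)~extracts a comparator of size $\range$ instead of $\bregdiam$, (ii)~retains a negative stability term strong enough to dominate the $\lips^{2}\curr[\step]^{2}\norm{\lead-\curr}^{2}$ terms arising in the smooth case, and (iii)~is compatible with the parameter choice $\numer = \sqrt{\hstr(\range + \hstr\diampoints^{2})}$ dictated by matching the $\bigoh(1/\last[\learn])$ and adaptive-sum contributions, is the step on which every subsequent estimate hinges.
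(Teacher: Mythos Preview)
Your Stages~1--2 and the \eqref{eq:BG} half of Stage~3 are essentially the paper's argument: a Fenchel-coupling three-point identity yields a template with comparator $\range/\learn_{\nRuns+1}$ (plus a diameter correction that only appears after telescoping the residual $\sum_{\run}(\learn_{\run+1}^{-1}-\learn_{\run}^{-1})\norm{\next-\lead}^{2}$), the Cutkosky--Kavis mixing converts regret to value gap, and under \eqref{eq:BG} the crude bound plus the square-root lemma closes the argument. One detail: the adaptive sum must carry weight $\learn_{\run+1}$, not $\learn_{\run}$, so that the $\run$-th summand already sits in the denominator and \cref{lem:sqrt} applies directly.

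The real divergence is your \eqref{eq:LG} plan. Two issues. First, the decomposition $\dnorm{\lead[\signal]-\curr[\signal]}\leq \lips\norm{\lead-\curr}+\cdots$ is not what is needed: the oracle is queried at $\lead[\avg\state],\curr[\avg\state]$, and the identity $\norm{\lead[\avg\state]-\curr[\avg\state]}=\tfrac{2}{\run+1}\norm{\lead-\curr}$ is precisely what cancels the $\curr[\step]^{2}=\run^{2}$ weight against the stability term. If you bound by $\lips\norm{\lead-\curr}$ directly, the positive contribution scales like $\run^{2}\norm{\lead-\curr}^{2}$ and the negative term $-\tfrac{\hstr}{2\curr[\learn]}\norm{\lead-\curr}^{2}$ cannot absorb it (since $1/\curr[\learn]$ grows at most like $\run^{3/2}$). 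Second, the paper does \emph{not} close via a global self-bounding $R\leq A+C\sqrt{R}$. Instead it introduces $\mindiff_{\run}^{2}=\min\{\dnorm{\nabla\obj(\lead[\avg\state])-\nabla\obj(\curr[\avg\state])}^{2},\dnorm{\lead[\signal]-\curr[\signal]}^{2}\}$ and a surrogate rate $\tilde{\learn}_{\run}$ built from the $\mindiff_{\run}$, then rewrites \emph{both} the adaptive sum and the stability term as $\sum_{\run}\curr[\step]^{2}\mindiff_{\run}^{2}\cdot\phi(\tilde{\learn}_{\run+1})$ for two competing functions $\phi$. Because $\tilde{\learn}_{\run}$ is monotone, there is a threshold index $\nRuns_{0}$ beyond which the bracket is nonpositive; the tail is dropped and the head is bounded via $\tilde{\learn}_{\nRuns_{0}+1}^{-1}\leq\bigoh(\lips/\hstr)$, which is exactly what delivers the $\hconst^{2}\lips/\hstr$ constant. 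A naive self-bounding that lower-bounds $1/\curr[\learn]\geq 1/\init[\learn]$ in the stability term would yield $\bigoh(\hconst^{3}\lips^{2}/\hstr^{2})$ instead, so the per-term threshold comparison is not a cosmetic choice.
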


\cref{thm:undergrad} is our main result so, before discussing its proof (which we carry out in detail in the appendix), some remarks are in order.


\begin{figure}[tbp]
\centering
\includegraphics[width=.5\columnwidth]{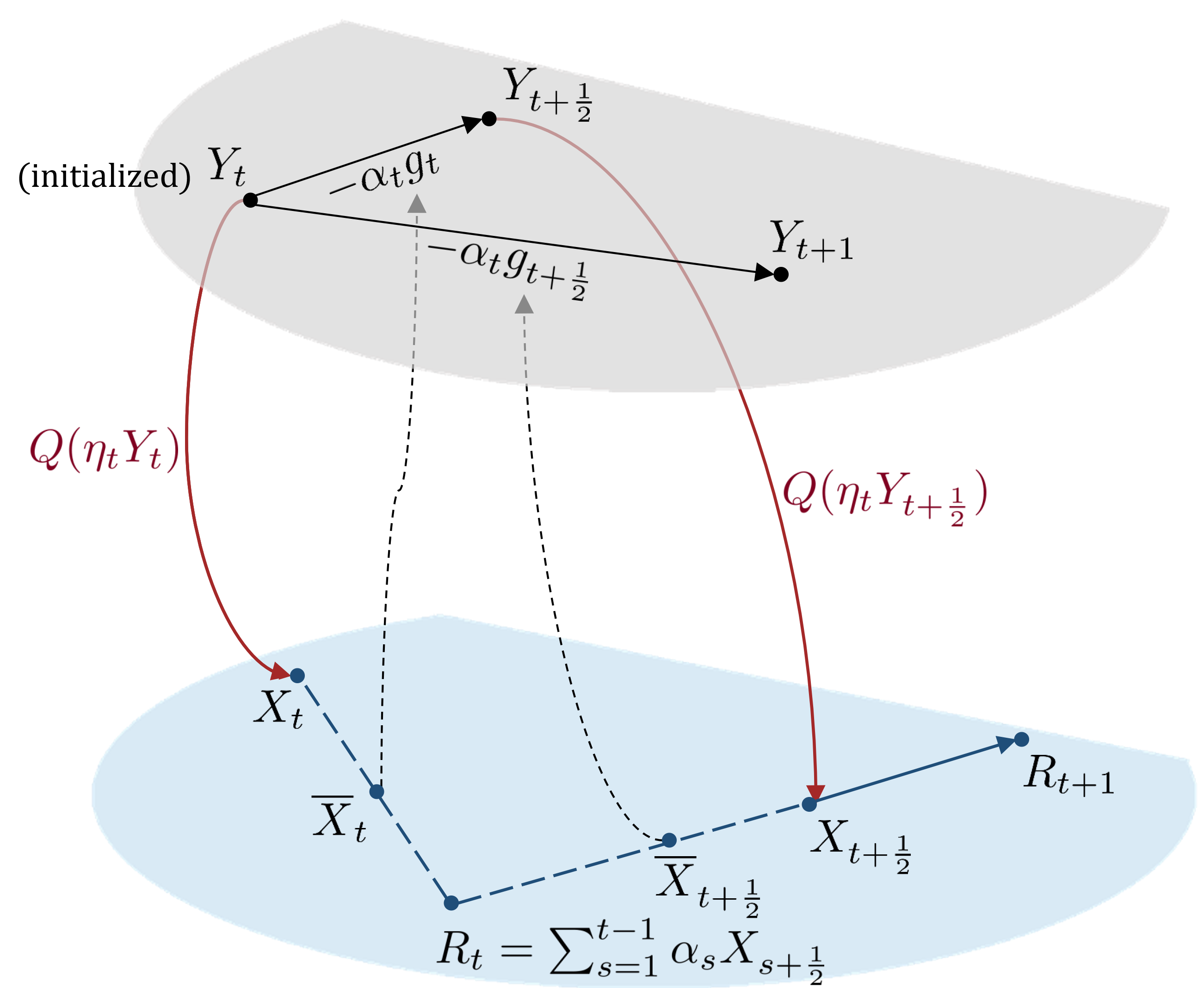}
\caption{Schematic representation of the \ac{undergrad} algorithm (\cref{alg:undergrad}).
The light blue area represents the problem's domain ($\points$), whereas the grey area represents the dual space ($\dpoints$).}
\label{fig:undergrad}
\vspace{-1ex}
\end{figure}


The first point of note concerns the dependence of the anytime bounds \eqref{eq:rate-under} on the problem's dimensionality.
To that end, let $\underconst_{\fast} = \hconst^{2}$ and $\underconst_{\slow} = \hconst$, so \ac{undergrad}'s rate of convergence scales as
$\bigoh(\underconst_{\fast} \shape^{2}/\nRuns^{2})$ in smooth, deterministic problems,
and as
$\bigoh(\underconst_{\slow} \shape/\sqrt{\nRuns})$ in non-smooth and/or stochastic environments.
Thus, to compare the algorithm's rate of convergence to that of \acl{MP} and \unixgrad (and up to universal constants), we have to compare $\hconst$ to $\range$ and $\bregdiam$ respectively.

To that end, we calculate below the values of $\underconst_{\fast}$ and $\underconst_{\slow}$ in the three archetypal examples of \cref{sec:base}:
\begin{enumerate}
\item
In the simplex setup of \cref{ex:simplex}, we have $\range = \log\vdim$, $\diampoints = 1$ and $\hstr = 1$, so $\underconst_{\slow} = \bigoh(\sqrt{\log\vdim})$ and $\underconst_{\fast} = \bigoh(\log\vdim)$.
\item
In the spectrahedron setup of \cref{ex:spectron}, we have again $\range = \log\vdim$, $\diampoints = 1$ and $\hstr = 1$, so $\underconst_{\slow} = \bigoh(\sqrt{\log\vdim})$ and $\underconst_{\fast} = \bigoh(\log\vdim)$.
[For a detailed discussion, see \cite{MBNS17,BMB20,KSST12} and references therein.]
\item
Finally, in the combinatorial setup of \cref{ex:comband}, we have $\range = \maxsel(1 + \log(\vdim/\maxsel))$, $\diampoints = \maxsel$ and $\hstr=1$ \cite{LS20}.
Thus, if $\maxsel = \bigoh(1)$ in $\vdim$, we get again $\underconst_{\slow} = \bigoh(\sqrt{\log\vdim})$ and $\underconst_{\fast} = \bigoh(\log\vdim)$.
\end{enumerate}

The above shows that \ac{undergrad} achieves the desired almost dimension-free rates of the \emph{non-adaptive} \acl{MP} algorithm, as well as the \emph{universal} order-optimal guarantees of \unixgrad.
The only discrepancy with the rates presented in \cref{tab:rates} is the additive constant $\hstr$ that appears in the numerator of \eqref{eq:rate-under-BG}:
this constant is an artifact of the analysis and it only becomes relevant when $\gbound\to0$ and $\sdev\to0$.
Since the scaling of the algorithm's convergence rate concerns the large $\gbound$ regime, this difference is not relevant for our purposes.

An additional difference between \ac{undergrad} and \unixgrad is that the latter involves the prox-mapping \eqref{eq:prox}, whereas the former involves the mirror map \eqref{eq:mirror}.
To compare the two in terms of their per-iteration complexity, note that if we apply the prox-mapping \eqref{eq:prox} to the prox-center $\point_{c} \gets \argmin\hreg$ of $\points$, we get
\begin{align}
\proxof{\point_{c}}{\dpoint}
    &= \argmin\nolimits_{\point\in\points}
        \{ \braket{\dpoint}{\point_{c} - \point} + \breg(\point,\point_{c}) \}
    \notag\\
    &= \argmin\nolimits_{\point\in\points}
        \{ \hreg(\point) - \braket{\nabla\hreg(\point_{c}) + \dpoint}{\point} \}
    \notag\\
    &= \mirror(\nabla\hreg(\point_{c}) + \dpoint)
\end{align}
so, in particular, $\mirror(\dpoint) = \proxof{\point_{c}}{\dpoint}$ whenever $\point_{c}\in\relint\points$ (which is the case for most regularizers used in practice, including the Legendre regularizers used in \crefrange{ex:simplex}{ex:comband} above).
This shows that the calculation of the mirror map $\mirror(\dpoint) = \proxof{\point_{c}}{\dpoint - \nabla\hreg(\point_{c})}$ is at least as simple as the calculation of the prox-mapping $\proxof{\point}{\dpoint}$ for a general base point $\point\in\points$ \textendash\ and, typically, calculating the mirror map is strictly lighter because there is no need to vary the base point over different iterations of the algorithm.
In this regard, the per-iteration overhead of \eqref{eq:UDE} is actually \emph{lighter} compared to \eqref{eq:MP} or \eqref{eq:DE}.

Finally, we should note that all our results above implicitly assume that the problem's domain is bounded (otherwise the range parameter $\range$ of the problem becomes infinite).
Thus, in addition to these convergence properties of \ac{undergrad}, we also provide below an asymptotic guarantee for problems with an \emph{unbounded} domain:
\begin{theorem}
\label{thm:unbounded}
Suppose that \ac{undergrad} is run with perfect oracle feedback with $\curr[\learn]$ given by \eqref{eq:learn} and $\curr[\step] = \run$.
If $\obj$ satisfies \eqref{eq:LG}, the algorithm's output state $\last[\out] = \lastlead[\avg\state]$ enjoys the rate $\obj(\last[\out]) - \min\obj = \bigoh(1/\nRuns^{2})$.
\end{theorem}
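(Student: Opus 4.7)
The plan is to parallel the proof of \cref{thm:undergrad}(b), systematically replacing the bounded-domain constants $\range$ and $\diampoints$ (both infinite here) by $\bregof{\sol}{\init}$ and $\norm{\sol - \init}$, which remain finite for any fixed minimizer $\sol \in \sols$. In the smooth, deterministic regime, the adaptive denominator $\curr[S] = \denom + \sum_{\runalt=\start}^{\run-1} \iter[\step]^{2} \dnorm{\iterlead[\signal] - \iter[\signal]}^{2}$ can be controlled by a self-bounding argument, so \ac{undergrad} effectively behaves as if $\points$ had a finite range.

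First, I would rederive the core regret-type inequality of \cref{thm:undergrad}(b) in the form
\begin{equation*}
\parens*{\txs\sum_{\run=\start}^{\nRuns} \curr[\step]} \bracks*{\obj(\last[\out]) - \obj(\sol)} \leq \frac{\bregof{\sol}{\init}}{\last[\learn]} + \frac{1}{2} \sum_{\run=\start}^{\nRuns} \curr[\learn] \curr[\step]^{2} \dnorm{\lead[\signal] - \curr[\signal]}^{2} - \mathcal{T}_{\nRuns},
\end{equation*}
where $\mathcal{T}_{\nRuns} \geq 0$ collects the ``stability'' contributions produced by telescoping the three-point identity for $\breg$ along the mirror-step sequence. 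This derivation uses only strong convexity of $\hreg$, convexity of $\obj$, and the definition of $\mirror$, so it carries over verbatim to unbounded $\points$ for any fixed $\sol \in \sols$.

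Next, I would combine \eqref{eq:LG} with the $(1/\hstr)$-Lipschitz property of the mirror map $\mirror$ to obtain $\dnorm{\lead[\signal] - \curr[\signal]} \leq \lips \norm{\lead - \curr}$, and then pair this bound with the stability reservoir $\mathcal{T}_{\nRuns}$ to control the gradient-variation sum. Combining the result with $\curr[\learn] = \numer/\sqrt{\curr[S]}$ and the integral comparison $\sum_{\runalt}(\afteriter[S] - \iter[S])/\sqrt{\iter[S]} \geq 2(\sqrt{\last[S]} - \sqrt{\denom})$ then produces a self-referential inequality of the form $\sqrt{\last[S]} \leq C$, where $C$ depends only on $\lips$, $\hstr$, and $\bregof{\sol}{\init}$. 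This mirrors the self-bounding arguments used in \unixgrad and \accelegrad, and it yields $\last[\learn] \geq \numer/C > 0$, bounded below independently of $\nRuns$.

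With $\last[S] = \bigoh(1)$, both terms on the right-hand side of the regret inequality are $\bigoh(1)$, while $\sum_{\run=\start}^{\nRuns} \curr[\step] = \nRuns(\nRuns+1)/2 = \Theta(\nRuns^{2})$; dividing yields $\obj(\last[\out]) - \min\obj = \bigoh(1/\nRuns^{2})$. The main obstacle is the self-bounding step: without $\diampoints$ to cap $\norm{\lead - \curr}$, the feedback loop between $\norm{\lead - \curr}$, $\dnorm{\lead[\signal] - \curr[\signal]}$, and $\curr[\learn]$ must be closed purely through \eqref{eq:LG}, the structure of $\mirror$, and the stability contributions in $\mathcal{T}_{\nRuns}$. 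Once this is secured, the asymptotic $\bigoh(1/\nRuns^{2})$ rate follows from the online-to-batch averaging already used in \cref{thm:undergrad}.
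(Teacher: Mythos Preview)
Your overall strategy---show that the adaptive denominator $S_{\nRuns}$ stays bounded in the smooth deterministic case, hence $1/\learn_{\nRuns+1}=\bigoh(1)$ and the regret is $\bigoh(1)$---matches the paper's. The route differs, however. The paper argues by \emph{contradiction}: it assumes $\inf_{\run}\learn_{\run}=0$, uses the template inequality together with the nonnegativity of the left-hand side (from \cref{lem:regret}) to obtain a two-sided bound on $\sum_{\run}\frac{\curr[\step]^{2}}{\learn_{\run}}\dnorm{\nabla\obj(\lead[\avg\state])-\nabla\obj(\curr[\avg\state])}^{2}$, and shows that the lower side blows up while the upper side is a finite sum over the first $\run_{0}$ iterations. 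You instead propose a direct self-bounding inequality for $\sqrt{S_{\nRuns}}$. Both reach the same conclusion; your route, if completed, would in principle deliver explicit constants, which the paper explicitly leaves open after stating \cref{thm:unbounded}.

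One point where your plan diverges from what actually works: you propose to ``replace $\diampoints$ by $\norm{\sol-\init}$.'' That substitution is not available, because in the bounded-domain proof $\diampoints$ enters as a cap on \emph{inter-iterate} distances $\norm{\next-\lead}$, not on distances to a fixed anchor. The paper sidesteps this entirely by applying Young's inequality to the cross term $\curr[\step]\braket{\lead[\signal]-\curr[\signal]}{\lead-\next}$ with parameter $\learn_{\run}$ rather than $\learn_{\run+1}$; then the resulting $\frac{\hstr}{2\learn_{\run}}\norm{\next-\lead}^{2}$ cancels \emph{exactly} against the matching stability term in the template, and no diameter bound is ever invoked. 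Your sketch acknowledges that ``the feedback loop \dots must be closed purely through \eqref{eq:LG} \dots and the stability contributions,'' but does not name this mechanism. A second minor point: the bound $\dnorm{\lead[\signal]-\curr[\signal]}\leq\lips\norm{\lead-\curr}$ does not require any Lipschitz property of $\mirror$; it follows from \eqref{eq:LG} applied at the \emph{averaged} states together with the identity $\lead[\avg\state]-\curr[\avg\state]=\frac{\curr[\step]}{\sum_{\runalt\leq\run}\iter[\step]}(\lead-\curr)$, exactly as in the paper.
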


This result provides an important extension of \cref{thm:undergrad} to problems with unbounded domains.
It remains an open question for the future to derive the precise constants in the convergence rate presented in \cref{thm:unbounded}.

\para{Main ideas of the proof}

The detailed proof of \cref{thm:undergrad} is fairly long so we defer it to the appendix and only present here the main ideas.

The main ingredient of our proof is a specific \emph{template inequality} used to derive an ``appropriate'' upper bound of the term $\regstoch_{\nRuns}(\point) \defeq  \sum_{\run=1}^{\nRuns} {\curr[\step]}\inner*{\lead[\signal] }{\lead-\point}$. Importantly, to prove the dimension-free properties of \ac{undergrad}, such an upper-bound \emph{cannot involve Bregman divergences}:
even though this is common practice in previous papers \cite{KLBC19,AllenOrecchia2017}, these terms would ultimately lead to the Bregman diameter $\bregdiam$ that we seek to avoid.
This is a principal part of the reason for switching gears to the \ac{DE} template for \ac{undergrad}:
in so doing, we are able to employ the notion of the \emph{Fenchel coupling} \cite{MS16,MerSta18}, which is a ``primal-dual distance'' as opposed to the Bregman divergence which is a ``primal-primal distance'' (\cf \cref{app:Bregman}).
This poses another challenge on connecting the Fenchel coupling of targeted points before and after a mirror step, for which we need to employ a primal-dual version of the ``three-point identity'' (\cref{lem:3points}).
These elements lead to the following proposition:

\begin{restatable}{proposition}{template}
\label{prop:template}
For all $\point \in \points$, we have
\begin{align}
\regstoch_{\nRuns}(\point)
	&\leq \frac{\range}{\learn_{\nRuns+1}}
	+  \sum_{\run=1}^{\nRuns}\curr[\step]\braket{\lead[\signal] -   \curr[\signal]  }{\lead -  \next[\state] }
    \notag\\
	&- \hstr \sum_{\run=\start}^{\nRuns}\frac{\norm{\next[\state] - \lead}^{2} + \norm{\lead - \curr[\state]}^{2}}{{2\curr[\learn]}}
\label{eq:Propo_energy_inequ}
\end{align}
\end{restatable}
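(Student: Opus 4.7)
The plan is to derive a one-step version of the inequality via the primal-dual three-point identity for the \emph{Fenchel coupling}, and then aggregate across iterations. I would write $\fench$ for the convex conjugate of $\hreg$ and use the Fenchel coupling $\fenchof{\point}{\dpoint}\defeq\hreg(\point)+\fench(\dpoint)-\braket{\dpoint}{\point}$ in place of the Bregman divergence. This choice is crucial: whereas $\bregdiam$ is infinite for the non-Euclidean setups of interest, the Fenchel coupling satisfies $\fenchof{\point}{0}=\hreg(\point)-\min\hreg\leq\range$ uniformly in $\point$, which is exactly what enables the dimension-free headline constant. The two workhorses I would lean on are the quadratic lower bound $\fenchof{\point}{\dpoint}\geq(\hstr/2)\norm{\point-\mirror(\dpoint)}^{2}$ (from $\hstr$-strong convexity of $\hreg$) and the primal-dual three-point identity
\[
\braket{\dvec}{\mirror(\dpoint)-\point} = \fenchof{\point}{\dpoint+\dvec} - \fenchof{\point}{\dpoint} - \fenchof{\mirror(\dpoint)}{\dpoint+\dvec},
\]
which is the Fenchel-coupling analogue of the classical Bregman three-point identity.

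To extract the per-iteration bound, I would invoke the three-point identity at iteration $\run$ twice, always with $\dpoint=\curr[\learn]\lead[\dstate]$: first with $\dvec=\curr[\learn]\curr[\step]\curr[\signal]=\curr[\learn](\curr[\dstate]-\lead[\dstate])$, so that $\mirror(\dpoint)=\lead$ and $\mirror(\dpoint+\dvec)=\curr$, yielding
\[
\curr[\learn]\curr[\step]\braket{\curr[\signal]}{\lead-\point}=\fenchof{\point}{\curr[\learn]\curr[\dstate]}-\fenchof{\point}{\curr[\learn]\lead[\dstate]}-\fenchof{\lead}{\curr[\learn]\curr[\dstate]},
\]
and second with $\dvec=\curr[\learn](\next[\dstate]-\lead[\dstate])=-\curr[\learn]\curr[\step](\lead[\signal]-\curr[\signal])$. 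Adding the two identities reconstitutes $\curr[\learn]\curr[\step]\braket{\lead[\signal]}{\lead-\point}$ on the left and, on the right, a difference $\fenchof{\point}{\curr[\learn]\curr[\dstate]}-\fenchof{\point}{\curr[\learn]\next[\dstate]}$ together with two residual Fenchel-coupling terms that the quadratic lower bound converts into $-(\hstr/2)(\norm{\lead-\curr}^{2}+\norm{\next-\lead}^{2})$; the leftover mixed contribution is precisely $\curr[\learn]\curr[\step]\braket{\lead[\signal]-\curr[\signal]}{\lead-\next}$. Dividing through by $\curr[\learn]$ then yields the one-step inequality
\begin{align*}
\curr[\step]\braket{\lead[\signal]}{\lead-\point}
 &\leq \tfrac{1}{\curr[\learn]}\bigl[\fenchof{\point}{\curr[\learn]\curr[\dstate]}-\fenchof{\point}{\curr[\learn]\next[\dstate]}\bigr] + \curr[\step]\braket{\lead[\signal]-\curr[\signal]}{\lead-\next} \\
 &\qquad - \tfrac{\hstr}{2\curr[\learn]}\bigl(\norm{\next-\lead}^{2}+\norm{\lead-\curr}^{2}\bigr).
\end{align*}

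Summing over $\run=1,\dotsc,\nRuns$ delivers the $(A)$-terms and quadratic terms directly. The remaining Fenchel-coupling differences would telescope immediately if $\curr[\learn]$ were constant, bottoming out at $\fenchof{\point}{\learn_{1}\dstate_{1}}/\learn_{1}=\fenchof{\point}{0}/\learn_{1}\leq\range/\learn_{1}\leq\range/\learn_{\nRuns+1}$, using the initialization $\dstate_{1}=0$ together with the monotonicity of $\curr[\learn]$ in $\run$. The main obstacle is precisely that the adaptive learning rate strictly changes: at the end of iteration $\run$ one naturally obtains $\fenchof{\point}{\curr[\learn]\next[\dstate]}/\curr[\learn]$, whereas the start of iteration $\run+1$ supplies $\fenchof{\point}{\next[\learn]\next[\dstate]}/\next[\learn]$, and these two scaled Fenchel couplings sit at different dual scales. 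Bridging them will require a careful argument exploiting the monotonicity of $\curr[\learn]$ (together with its explicit form $\numer/\sqrt{\curr[S]}$) and the convexity of $\fench$, so that the bridging correction can be absorbed without reintroducing $\bregdiam$ or $\diampoints$ — which would entirely defeat the purpose of switching to the Fenchel-coupling framework. Handling this bridging delicately is the critical technical step.
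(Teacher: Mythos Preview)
Your overall architecture is right—Fenchel coupling instead of Bregman, the primal-dual three-point identity, a one-step bound, and the observation that $\fench(\point,0)\leq\range$ is what replaces $\bregdiam$. But the specific two-identity decomposition you describe does not produce the one-step inequality you claim. Both of your three-point applications are anchored at $\lead=\mirror(\curr[\learn]\lead[\dstate])$; combining them (subtracting, not adding) gives
\[
\curr[\learn]\curr[\step]\braket{\lead[\signal]}{\lead-\point}
=\fench(\point,\curr[\learn]\curr[\dstate])-\fench(\point,\curr[\learn]\next[\dstate])
-\fench(\lead,\curr[\learn]\curr[\dstate])+\fench(\lead,\curr[\learn]\next[\dstate]),
\]
and the last term carries the \emph{wrong sign}. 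Even ignoring the sign, $\mirror(\curr[\learn]\next[\dstate])$ is not $\next=\mirror(\next[\learn]\next[\dstate])$, so the quadratic lower bound cannot manufacture $\norm{\next-\lead}^{2}$; nor does this combination ever generate the cross term $\braket{\lead[\signal]-\curr[\signal]}{\lead-\next}$. The paper obtains both by first routing through $\next$: it writes $\braket{\lead[\signal]}{\lead-\point}=\braket{\lead[\signal]}{\lead-\next}+\braket{\lead[\signal]}{\next-\point}$ and applies the three-point identity to the second summand with anchor $\next=\mirror(\next[\learn]\next[\dstate])$; a further three-point identity (anchor $\lead$, base $\next$) then converts the residual $\fench(\next,\curr[\learn]\curr[\dstate])$ into $\curr[\step]\braket{\curr[\signal]}{\lead-\next}$ plus the two couplings $\fench(\next,\curr[\learn]\lead[\dstate])\geq\tfrac{\hstr}{2}\norm{\next-\lead}^{2}$ and $\fench(\lead,\curr[\learn]\curr[\dstate])\geq\tfrac{\hstr}{2}\norm{\lead-\curr}^{2}$, which is where both quadratic terms and the cross term come from.

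As for the ``bridging'' obstacle you flag, the paper does not treat it as a post-hoc repair at all. Setting $\tilde{\dstate}_{\run}=\curr[\learn]\curr[\dstate]$, it splits the dual increment in advance as
\[
\curr[\step]\lead[\signal]
=\tfrac{1}{\curr[\learn]}\bigl(\tilde{\dstate}_{\run}-\tilde{\dstate}_{\run+1}\bigr)
+\Bigl(\tfrac{1}{\next[\learn]}-\tfrac{1}{\curr[\learn]}\Bigr)\bigl(0-\tilde{\dstate}_{\run+1}\bigr),
\]
and applies the three-point identity (anchored at $\next=\mirror(\tilde{\dstate}_{\run+1})$) to each piece separately. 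The two contributions combine to $\tfrac{1}{\curr[\learn]}\fench(\point,\tilde{\dstate}_{\run})-\tfrac{1}{\next[\learn]}\fench(\point,\tilde{\dstate}_{\run+1})+(\tfrac{1}{\next[\learn]}-\tfrac{1}{\curr[\learn]})\fench(\point,0)$, and both parts telescope directly—the first to $\tfrac{1}{\learn_{1}}\fench(\point,0)$ and the second to $(\tfrac{1}{\learn_{\nRuns+1}}-\tfrac{1}{\learn_{1}})\range$—yielding the headline $\range/\learn_{\nRuns+1}$ with no leftover. This ``anchor-at-zero'' splitting is the missing idea; your proposed route via convexity of $\hreg^{\ast}$ can in fact be made to work for the scale change, but it would not rescue the incorrect one-step identity above.
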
 


With \eqref{eq:Propo_energy_inequ} in hand, \eqref{eq:rate-under-BG} comes from the application of the Fenchel-Young inequality to upper-bound the right-hand-side of \eqref{eq:Propo_energy_inequ} as $\sum_{\run=\start}^{\nRuns}\curr[\step]^2 \next[\learn] \dnorm{\lead[\signal] - \curr[\signal]}$ (plus a constant term involving $\diampoints$). The challenge here is to notice and successfully prove that this summation is actually upper-bounded by ${\learn}_{\nRuns+1}^{-1}$ (due to our special choice of the learning rate update). Finally, by its definition, ${\learn}_{\nRuns+1}^{-1}$ can be bounded by $\gbound$, $\sdev$ and $\hstr$ as described in the statement of \cref{thm:undergrad}.

The proof of \eqref{eq:rate-under-LG} is far more complex.
The main challenge is to manipulate the terms in \eqref{eq:Propo_energy_inequ} to derive an upper-bound of the form $\sum_{\run=1}^{\nRuns}\curr[\step]^2 \func(\next[\learn]) \dnorm{\nabla \obj (\lead[\avg\state]) - \nabla \obj \curr[\avg\state]}^2$ (plus a term involving the noise level $\sdev$) where $\func(\next[\learn])$ is a function of the learning rate chosen such that only the first $\nRuns_0 \ll \nRuns$ elements of this summation are positive.
Once this has been achieved, the quantity $\dnorm{\nabla \obj (\lead[\avg\state]) - \nabla \obj \curr[\avg\state]}$ is connected to $\diampoints$ via \eqref{eq:LG} and our claim is obtained.


\section{Numerical Experiments}
\label{sec:numerics}

For the experimental validation of our results, we focus on the simplex setup of \cref{ex:simplex} with linear losses and $\vdim=100$.
Our first experiment concerns the \emph{perfect} \ac{SFO} case and tracks down the convergence properties of \undergrad run with the \emph{entropic regularizer} adapted to the simplex.
As a baseline, we ran \unixgrad, also with the entropic regularizer.
A first challenge here is that the Bregman diameter $\bregdiam$ of the simmplex is infinite, so \unixgrad is not well-defined.
On that account, we choose the step-size update rule of \unixgrad such that its initial step-size $\init[\stepalt]$ equals the initial learning rate $\init[\learn]$ of \undergrad. 
We also ran \unixgrad with the update rule such that $\init[\stepalt]$ is smaller or larger than $\init[\learn]$.
Finally, for comparison purposes, we also present a non-adaptive \emph{accelerated entropic gradient} (AEG) algorithm, and we report the results in \cref{fig:static}.
\begin{figure}[tbp]
	\centering
		\begin{tikzpicture}
			\node (img){\includegraphics[width = .5\columnwidth]{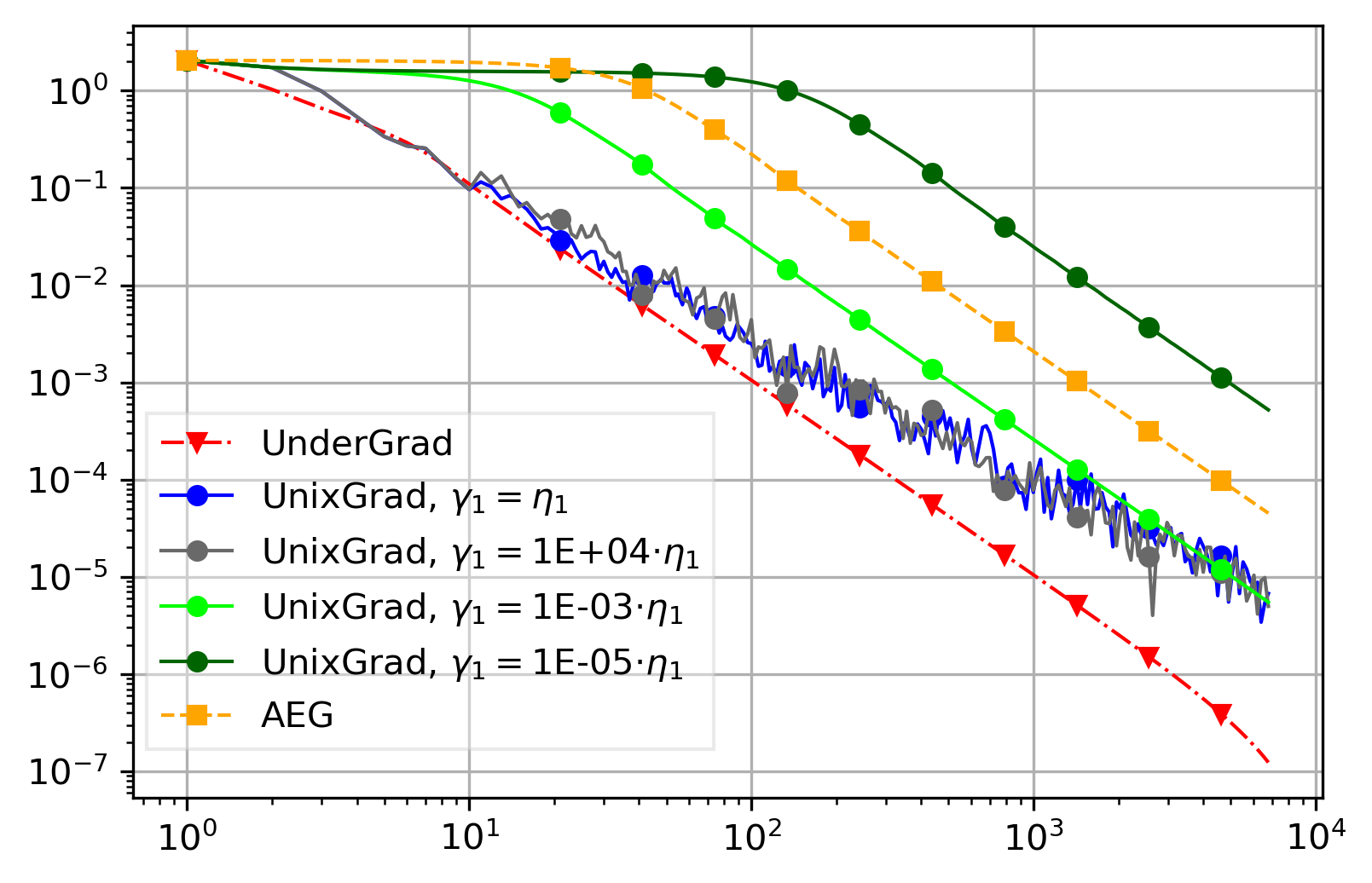}};
			\node[below=of img, node distance=0cm, yshift=1.2cm, xshift =0.2cm] {\scriptsize${\nRuns}$};
			\node[left=of img, node distance=0cm, rotate=90, anchor=center,xshift=0cm, yshift=-1.1cm] {\scriptsize${\gap(\nRuns)}$};
		\end{tikzpicture}
		\vspace{-3ex}
		\caption{\footnotesize Convergence of \undergrad and \unixgrad in the simplex setup with a perfect \ac{SFO}. The $y$-axis corresponds to the differences between the $\obj$-value of the relevant point of each algorithm and $\min \obj$.
		The code is available at \url{https://github.com/dongquan-vu/UnDerGrad_Universal_CnvOpt_ICML2022}.}
		\label{fig:static}
		\vspace{-3ex}
\end{figure}

\cref{fig:static} confirms that \undergrad successfully converges with an accelerated rate of $\bigoh(1/\nRuns^2)$.
Perhaps surprisingly, it also shows that when \unixgrad's initial step-size is small (10E-3 or smaller), \unixgrad also achieves an $\bigoh(1/\nRuns^{2})$, but at a much more conservative pace, trailing \ac{undergrad} by one or two orders of magnitude.
On the other hand, when \unixgrad's initial step-size is of the same magnitude as the \undergrad's learning rate (or larger), \unixgrad eventually destabilizes and its rate drops from $\bigoh(1/\nRuns^2)$ to approximately $\bigoh(1/\nRuns)$.
We also conducted experiments in the setup with a noisy \ac{SFO};
these are reported in \cref{app:numerics}.

\appendix
\setcounter{remark}{0}
\numberwithin{equation}{section}		
\numberwithin{lemma}{section}		
\numberwithin{proposition}{section}		
\numberwithin{theorem}{section}		
\numberwithin{corollary}{section}		

\section{Bregman regularizers and preliminary results}

\subsection{Bregman regularizers and their properties}
\label{app:Bregman}

We begin by clarifying and recalling some of the notational convetions used throughout the paper.
We also give the formal definition of the Fenchel coupling (a key notion for the proof our main results) and we present some preliminary results to prepare the ground for the sequel.

The convex conjugate $\hreg^{\ast}\from\dpoints\to\R$ of $\hreg$ is then defined as
\begin{equation}
\label{eq:conj}
\hreg^{\ast}(\dpoint)
	= \sup_{\point\in\points} \{ \braket{\dpoint}{\point} - \hreg(\point) \}.
\end{equation}
  As a result, the supremum in \eqref{eq:conj} is always attained, and $\hreg^{\ast}(\dpoint)$ is finite for all $\dpoint\in\dpoints$ \citep{BC17}.
Moreover, by standard results in convex analysis \citep[Chap.~26]{Roc70}, $\hreg^{\ast}$ is differentiable on $\dpoints$ and its gradient satisfies the identity
\begin{equation}
\label{eq:dconj}
\nabla\hreg^{\ast}(\dpoint)
	= \argmax_{\point\in\points} \{ \braket{\dpoint}{\point} - \hreg(\point) \}.
\end{equation}
Thus, recalling the definition of the mirror map $\mirror\from\dpoints\to\points$, we readily get
\begin{equation}
\label{eq:mirror2}
\mirror(\dpoint)
	= \nabla\hreg^{\ast}(\dpoint).
\end{equation}
\begin{lemma}
\label{lem:mirror}
Let $\hreg$ be a Bregman regularizer on $\points$.
Then, for all $\point,\new\in\dom\subd\hreg$ and all $\dpoint,\dvec\in\dpoints$, we have:
\begin{subequations}
\label{eq:links}
\begin{alignat}{4}
\label{eq:links-mirror}
&a)&
	&\;\;
	\point = \mirror(\dpoint)
	&\;\iff\;
	&\dpoint \in \subd\hreg(\point).
	\\
\label{eq:links-prox}
&b)&
	&\;\;
	\new\point = \mirror(\nabla\hreg(\point) + \dvec)
	&\;\iff\;
	&\nabla\hreg(\point) + \dvec \in \subd\hreg(\new\point)
\end{alignat}
\end{subequations}
Finally, if $\point = \mirror(\dpoint)$ and $\base\in\points$, we have
\begin{equation}
\label{eq:selection}
\braket{\nabla\hreg(\point)}{\point - \base}
	\leq \braket{\dpoint}{\point - \base}.
\end{equation}
\end{lemma}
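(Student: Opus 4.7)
The plan is to prove the three claims in order, with parts (a) and (b) handled by standard convex-analytic characterizations of argmax sets, and the final inequality obtained from the first-order variational characterization of $\mirror(\dpoint)$ as the maximizer of a concave objective on $\points$.

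For part (a), the strategy is to apply Fermat's rule to the (concave) objective $\pointalt \mapsto \braket{\dpoint}{\pointalt} - \hreg(\pointalt)$ on $\points$. Because $\hreg$ is strongly convex and continuous on the closed convex set $\points = \dom\hreg$, the argmax in the definition $\mirror(\dpoint) = \argmax_{\pointalt\in\points}\{\braket{\dpoint}{\pointalt}-\hreg(\pointalt)\}$ is uniquely attained, so $\point = \mirror(\dpoint)$ iff $\point$ is the unique minimizer of $\hreg - \braket{\dpoint}{\cdot}$ over $\points$. Equivalently, invoking the Fenchel\textendash Moreau correspondence $\point \in \argmax\{\braket{\dpoint}{\cdot} - \hreg\} \iff \dpoint \in \subd\hreg(\point)$ (see, e.g., \citep{Roc70,BC17}), we obtain the stated biconditional; the restriction $\point\in\dom\subd\hreg$ is exactly what makes the right-hand side meaningful. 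Part (b) then follows by simply substituting $\dpoint \leftarrow \nabla\hreg(\point) + \dvec$ into part (a) and reading off the equivalence.

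For the final inequality, the key observation is that $\point = \mirror(\dpoint)$ solves a constrained optimization problem over $\points$, so it satisfies the first-order variational inequality: for every $\base\in\points$,
\begin{equation*}
\braket{\dpoint - \subsel\hreg(\point)}{\base - \point} \leq 0,
\end{equation*}
where $\subsel\hreg(\point) = \nabla\hreg(\point)$ is the distinguished continuous selection from \cref{def:Bregman}. Here I am using that $\subsel\hreg(\point)\in\subd\hreg(\point)$, so $\dpoint - \subsel\hreg(\point)$ is a supergradient (a subgradient of the concave objective $\braket{\dpoint}{\cdot} - \hreg$) at the optimum, and hence must make a non-positive inner product with any feasible direction $\base - \point$. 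Rearranging yields $\braket{\nabla\hreg(\point)}{\point - \base} \leq \braket{\dpoint}{\point - \base}$, as required.

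No step here is a genuine obstacle: the only subtlety is matching the notational convention $\nabla\hreg(\point) \equiv \subsel\hreg(\point)$ of \cref{def:Bregman} (which is needed because $\hreg$ need not be differentiable on all of $\points$) and being careful that the variational inequality uses the \emph{specific} selection $\subsel\hreg(\point)$ rather than an arbitrary element of $\subd\hreg(\point)$ --- but since $\subsel\hreg(\point)\in\subd\hreg(\point)$ by definition, this is immediate. All three statements reduce to one-line applications of standard subdifferential calculus together with the defining argmax characterization \eqref{eq:mirror} of $\mirror$.
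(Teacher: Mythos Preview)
Your treatment of parts (a) and (b) is correct and matches the paper's approach: Fermat's rule applied to $\min\{\hreg - \braket{\dpoint}{\cdot}\}$ gives $\dpoint \in \subd\hreg(\point)$, and (b) is (a) with a substitution.

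The gap is in your argument for \eqref{eq:selection}. You assert that because $\nabla\hreg(\point) \in \subd\hreg(\point)$, the vector $\dpoint - \nabla\hreg(\point)$ is a supergradient of the concave objective at the maximizer, ``and hence must make a non-positive inner product with any feasible direction.'' That ``hence'' is false in general: the first-order optimality condition guarantees that \emph{some} subgradient of $\hreg - \braket{\dpoint}{\cdot}$ at $\point$ lies in $-N_{\points}(\point)$ (indeed the obvious witness is $0 = \dpoint - \dpoint$, since $\dpoint \in \subd\hreg(\point)$ by part (a)), but it does \emph{not} guarantee this for every subgradient. Concretely, take $\points = [0,1]$, $\hreg(\point) = \tfrac{1}{2}\point^{2}$, $\dpoint = 0$, so $\point = \mirror(0) = 0$ and $\subd\hreg(0) = (-\infty,0]$. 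For the (discontinuous) choice $w = -5 \in \subd\hreg(0)$, the inequality $\braket{w}{0 - \base} \leq \braket{0}{0 - \base}$ reads $5\base \leq 0$, which fails for $\base > 0$. Your justification uses only membership in $\subd\hreg(\point)$ and would apply verbatim to $w = -5$; so it cannot be correct as stated. You even flag the issue in your last paragraph and then dismiss it with ``since $\nabla\hreg(\point)\in\subd\hreg(\point)$ by definition, this is immediate'' --- but that is exactly the direction that does \emph{not} help.

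What is missing is the \emph{continuity} of the selection $\nabla\hreg$ from \cref{def:Bregman}, which you mention but never use. The paper exploits it by restricting to the segment $[\point,\base]$: set $\phi(t) = \hreg(\point + t(\base - \point)) - \hreg(\point) - \braket{\dpoint}{t(\base-\point)}$ and observe that $\psi(t) = \braket{\nabla\hreg(\point + t(\base-\point)) - \dpoint}{\base - \point}$ is a \emph{continuous} selection of subgradients of the scalar convex function $\phi$ on $[0,1]$. A one-variable convex function with a continuous subgradient selection is $C^{1}$, so $\psi(0)$ equals the one-sided derivative $\phi_{+}'(0)$; and since $\dpoint \in \subd\hreg(\point)$ gives $\phi(t) \geq \phi(0)$, this derivative is nonnegative, which is exactly \eqref{eq:selection}. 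Continuity of the selection is precisely what pins $\nabla\hreg(\point)$ down as the directional derivative along $\base - \point$, and that identification is what makes the variational inequality valid for this particular element of $\subd\hreg(\point)$.
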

\begin{proof}[Proof of \cref{lem:mirror}]
To prove \eqref{eq:links-mirror}, note that $\point$ solves \eqref{eq:dconj} if and only if $\dpoint - \subd\hreg(\point) \ni 0$, \ie if and only if $\dpoint\in\subd\hreg(\point)$.
\cref{eq:links-prox} is then obtained in the same manner.

For the inequality \eqref{eq:selection}, it suffices to show it holds for all $\base\in\proxdom \equiv \dom\subd\hreg$ (by continuity).
To do so, let
\begin{equation}
\phi(t)
	= \hreg(\point + t(\base-\point))
	- \bracks{\hreg(\point) +  \braket{\dpoint}{\point + t(\base-\point)}}.
\end{equation}
Since $\hreg$ is strongly convex relative to $\gmat$ and $\dpoint\in\subd\hreg(\point)$ by \eqref{eq:links-mirror}, it follows that $\phi(t)\geq0$ with equality if and only if $t=0$.
Moreover, note that $\psi(t) = \braket{\nabla\hreg(\point + t(\base-\point)) - \dpoint}{\base - \point}$ is a continuous selection of subgradients of $\phi$.
Given that $\phi$ and $\psi$ are both continuous on $[0,1]$, it follows that $\phi$ is continuously differentiable and $\phi' = \psi$ on $[0,1]$.
Thus, with $\phi$ convex and $\phi(t) \geq 0 = \phi(0)$ for all $t\in[0,1]$, we conclude that $\phi'(0) = \braket{\nabla\hreg(\point) - \dpoint}{\base - \point} \geq 0$.
\end{proof}

As we mentioned earlier, much of our analysis revolves around a ''primal-dual'' divergence between a target point $\base \in \points$ and a dual vector $\dpoint \in \dpoints$, called the \emph{Fenchel coupling}.
Following \citep{MZ19}, this is defined as follows for all $\base\in\points$, $\dpoint\in\dpoints$:
\begin{equation}
\fench(\base,\dpoint)=\hreg(\base)+\hreg^{\ast}(\dpoint)-\braket{\dpoint}{\base}.
\end{equation}
The following lemma illustrates some basic properties of the Fenchel coupling:

\begin{lemma}
\label{lem:Fenchel}
Let $\hreg$ be a Bregman regularizer on $\points$ with convexity modulus $\hstr$. Then, for all $\base \in 
\points$ and all $\dpoint \in \dpoints$, we have:
\begin{enumerate}
\item
$\fench(\base,\dpoint) = \breg(\base,\mirror(\dpoint))$
	if $\mirror(\dpoint) \in \proxdom$ (but not necessarily otherwise).
\item
$\fench(\base,\dpoint) \geq \frac{\hstr}{2} \norm{\mirror(\dpoint)-\base}^{2}$
\end{enumerate} 
\end{lemma}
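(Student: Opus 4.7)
The plan is to prove both parts from a single master identity, after which each claim reduces to a short argument. Let $\point := \mirror(\dpoint)$ throughout. By definition of the mirror map, $\point = \argmax_{\pointalt \in \points}\{\braket{\dpoint}{\pointalt} - \hreg(\pointalt)\}$, so
\[
\hreg^{\ast}(\dpoint) \;=\; \braket{\dpoint}{\point} - \hreg(\point),
\]
and substituting into the definition of the Fenchel coupling gives the key identity
\[
\fench(\base,\dpoint) \;=\; \hreg(\base) - \hreg(\point) - \braket{\dpoint}{\base - \point}.
\]

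For part (1), when $\point \in \proxdom$, Lemma~\ref{lem:mirror}\eqref{eq:links-mirror} gives $\dpoint \in \subd\hreg(\point)$. The right-hand side of the key identity then matches the definition of $\breg(\base,\point)$ up to the choice of subgradient: the Bregman divergence is defined via the fixed continuous selection $\subsel\hreg(\point)$, whereas the Fenchel coupling uses the specific subgradient $\dpoint$. For the Legendre-type regularizers that the paper invokes (Euclidean, entropic, von Neumann, unnormalized entropy), $\subd\hreg$ is single-valued on $\proxdom$, so necessarily $\dpoint = \subsel\hreg(\point)$ and the two quantities coincide. In broader settings one simply regards $\breg(\base,\point)$ as computed with the subgradient $\dpoint$; the equality is then tautological.

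For part (2), I would avoid any reference to subgradient selections altogether. Introduce $f(\pointalt) := \hreg(\pointalt) - \braket{\dpoint}{\pointalt}$ on $\points$; by the defining property of $\mirror$, $\point$ is the constrained minimizer of $f$, and since linear terms do not affect convexity, $f$ inherits $\hstr$-strong convexity from $\hreg$. The standard quadratic-growth property of a strongly convex function at its constrained minimizer yields, for every $\base \in \points$,
\[
f(\base) \;\geq\; f(\point) + \frac{\hstr}{2}\norm{\base - \point}^{2}.
\]
But $f(\base) - f(\point) = \hreg(\base) - \hreg(\point) - \braket{\dpoint}{\base - \point}$, which by the key identity is exactly $\fench(\base,\dpoint)$. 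The claim follows.

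The only delicate point is the technical reconciliation in part~(1) between the specific dual vector $\dpoint$ and the fixed continuous selection $\subsel\hreg$ used to define $\breg$; everything else reduces to Fenchel–Young equality and a one-line application of strong convexity at a constrained minimizer. Note that the argument for part~(2) does not require $\point \in \proxdom$, which is important since the mirror image may land on the boundary of $\points$ where differentiability of $\hreg$ may fail.
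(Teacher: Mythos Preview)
Your proof is correct and follows the same route as the paper: derive the identity $\fench(\base,\dpoint) = \hreg(\base) - \hreg(\point) - \braket{\dpoint}{\base-\point}$ with $\point = \mirror(\dpoint)$, then read off both claims from it. Your framing of part~(2) via quadratic growth of a strongly convex function at its constrained minimizer, and your explicit flagging of the subgradient-selection subtlety in part~(1), are slightly more careful than the paper's own treatment, but the approach is the same.
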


\begin{proof}
For our first claim, let $\point=\mirror(\dpoint)$. Then, by definition we have:
\begin{equation}
\fench(\base,\dpoint)
	= \hreg(\base)
		- \braket{\dpoint}{\mirror(\dpoint)}
		- \hreg(\mirror(\dpoint))
		- \braket{\dpoint}{\base}
	= \hreg(\base)
		- \hreg(\point)
		- \braket{\dpoint}{\base-\point}.
\end{equation}
Since $\dpoint \in \partial \hreg(\point)$, we have $\hreg'(\point;\base-\point)=\braket{\dpoint}{\base-
\point}$ whenever $\point \in \proxdom$, thus proving our first claim.
For our second claim, working in the previous spirit we get that:
\begin{equation}
\fench(\base,\dpoint)
	= \hreg(\base) - \hreg(\point)
	- \braket{\dpoint}{\base - \point}
\end{equation}
Thus, we obtain the result by recalling the strong convexity assumption for $\hreg$ with respect to the respetive norm $\norm{\cdot}$.
\end{proof}
We continue with some basic relations connecting the Fenchel coupling relative to a target point before and after a gradient step.
The basic ingredient for this is a primal-dual analogue of the so-called ``three-point identity'' for Bregman functions \citep{CT93}:

\begin{lemma}
\label{lem:3points}
Let $\hreg$ be a Bregman regularizer on $\points$. Fix some $\base \in\points$ and let $\dpoint,\dpoint^{+} \in \dpoints$.
Then, letting $\point = \mirror(\dpoint)$, we have
\begin{equation}
\label{eq:3points}
\fench(\base,\dpoint^{+})
	= \fench(\base,\dpoint)
	+ \fench(\point,\dpoint^{+})
	+ \braket{\dpoint^{+}-\dpoint}{\point - \base}.
\end{equation}
\end{lemma}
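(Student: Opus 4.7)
My plan is to prove the identity by a direct computation: expand each of the three Fenchel couplings on the right-hand side using the definition $\fench(u,v) = \hreg(u) + \hreg^{\ast}(v) - \braket{v}{u}$, add them together along with the linear cross-term, and cancel against the expansion of the left-hand side. The terms $\hreg(\base)$, $\hreg^{\ast}(\dpoint^{+})$ and $-\braket{\dpoint^{+}}{\base}$ appearing on the right will match the three terms in $\fench(\base,\dpoint^{+})$ on the left; all the ``mixed'' inner products involving $\base$ will telescope; and the cross-term $\braket{\dpoint^{+}-\dpoint}{\point}$ will cancel the $-\braket{\dpoint^{+}}{\point}$ coming from $\fench(\point,\dpoint^{+})$.

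After cancellation, the only obstacle is to show that the residual $\hreg^{\ast}(\dpoint) + \hreg(\point) - \braket{\dpoint}{\point}$ vanishes. This is precisely the Fenchel--Young equality, which holds exactly when $\point$ attains the supremum in the conjugate, i.e., when $\point = \nabla\hreg^{\ast}(\dpoint) = \mirror(\dpoint)$. That is the hypothesis of the lemma and it is the only place where the assumption $\point = \mirror(\dpoint)$ enters, so this is really the content of the identity.

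Concretely, the write-up will have just two short steps:

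\begin{enumerate}
\item[(i)] Observe that $\point = \mirror(\dpoint)$ together with the definition \eqref{eq:conj} of the conjugate yields $\hreg^{\ast}(\dpoint) = \braket{\dpoint}{\point} - \hreg(\point)$, equivalently $\fench(\point,\dpoint) = 0$.
\item[(ii)] Write out $\fench(\base,\dpoint) + \fench(\point,\dpoint^{+}) + \braket{\dpoint^{+}-\dpoint}{\point-\base}$, regroup, and use (i) to collapse the $\hreg^{\ast}(\dpoint) + \hreg(\point) - \braket{\dpoint}{\point}$ block to zero, leaving exactly $\hreg(\base) + \hreg^{\ast}(\dpoint^{+}) - \braket{\dpoint^{+}}{\base} = \fench(\base,\dpoint^{+})$.
\end{enumerate}

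No convex-analytic machinery beyond the definition of the conjugate is needed; in particular, neither strong convexity of $\hreg$ nor \cref{lem:mirror} is invoked. The identity is really a ``primal--dual'' recasting of the classical three-point identity for Bregman divergences, and the only conceptual point is the Fenchel--Young equality at the point where $\mirror$ is evaluated. I do not anticipate any genuine difficulty; the main thing to be careful about is bookkeeping of signs in the inner products when expanding $\braket{\dpoint^{+}-\dpoint}{\point-\base}$.
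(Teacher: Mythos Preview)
Your proposal is correct and follows essentially the same approach as the paper: both arguments reduce to direct expansion of the Fenchel couplings and an application of the Fenchel--Young equality $\hreg^{\ast}(\dpoint) = \braket{\dpoint}{\point} - \hreg(\point)$ at $\point = \mirror(\dpoint)$. The only cosmetic difference is that the paper computes $\fench(\base,\dpoint^{+}) - \fench(\base,\dpoint)$ and manipulates toward the remaining right-hand side terms, whereas you expand the full right-hand side and collapse it to the left; the algebra and the single substantive step are identical.
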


\begin{proof}
By definition, we get:
\begin{equation}
\begin{aligned}
\fench(\base,\dpoint^{+})
	&= \hreg(\base) +\hreg^{\ast}(\dpoint^{+}) - \braket{\dpoint^{+}}{\base}
	\\
\fench(\base,\dpoint)\hphantom{'}
	&= \hreg(\base) + \hreg^{\ast}(\dpoint) - \braket{\dpoint}{\base}.
	\\
\end{aligned}
\end{equation}
Then, by subtracting the above we get:
\begin{align}
\fench(\base,\dpoint^{+})-\fench(\base,\dpoint)
	&= \hreg(\base)
		+ \hreg^{\ast}(\dpoint^{+})
		- \braket{\dpoint^{+}}{\base}
		- \hreg(\base)
		- \hreg^{\ast}(\dpoint)
		+ \braket{\dpoint}{\base}
	\notag\\
	&=\hreg^{\ast}(\dpoint^{+})
		- \hreg^{\ast}(\dpoint)
		- \braket{\dpoint^{+} - \dpoint}{\base}
	\notag\\
	&=\hreg^{\ast}(\dpoint^{+})
		- \braket{\dpoint}{\mirror(\dpoint)}
		+ \hreg(\mirror(\dpoint))
		- \braket{\dpoint^{+} - \dpoint}{\base}
	\notag\\
	&=\hreg^{\ast}(\dpoint^{+})
		- \braket{\dpoint}{\point}
		+ \hreg(\point)
		- \braket{\dpoint^{+} - \dpoint}{\base}
	\notag\\
	&= \hreg^{\ast}(\dpoint^{+})
		+ \braket{\dpoint^{+} - \dpoint}{\point}
		- \braket{\dpoint^{+}}{\point}
		+ \hreg(\point)
		- \braket{\dpoint^{+} - \dpoint}{\base}
	\notag\\
	&= \fench(\point,\dpoint^{+})
		+ \braket{\dpoint^{+} - \dpoint}{\point - \base}
\end{align}
and our proof is complete.
\end{proof}

\subsection{Numerical sequence inequalities}

\label{app:sequences}


In this section, we provide some necessary inequalities on numerical sequences that we require for the convergence rate analysis of the previous sections.
Most of the lemmas presented below already exist in the literature, and go as far back as \citet{ACBG02} and \citet{MS10};
when appropriate, we note next to each lemma the references with the statement closest to the precise version we are using in our analysis.


\smallskip


\begin{lemma}[\citealp{MS10}, \citealp{LYC18}]
\label{lem:sqrt}
    For all non-negative numbers $\real_{1},\dotsc \real_{\run}$, the following inequality holds:
    \begin{equation}
        \sqrt{\denom + \sum_{\run=\start}^{\nRuns}\real_{\run}}\leq \denomsqrt + \sum_{\run=\start}^{\nRuns}\dfrac{\real_{\run}}{\sqrt{\sum_{\runalt=\start}^{\run}\real_{\runalt}}}\leq 2\sqrt{\denom +  \sum_{\run=\start}^{\nRuns}\real_{\run}}
    \end{equation}
\end{lemma}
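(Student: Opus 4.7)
My plan is to reduce both inequalities to a telescoping bound on the running partial sums
\begin{equation*}
    S_{\run} \defeq \denom + \sum_{\runalt = \start}^{\run} \real_{\runalt},
    \qquad S_{\beforestart} \defeq \denom,
\end{equation*}
so that $\sqrt{S_{\nRuns}}$ is exactly the outer quantity appearing in the lemma, $\sqrt{S_{\beforestart}} = \denomsqrt$, and $\real_{\run} = S_{\run} - S_{\run - 1}$ for every $\run$. Since the $\real_{\run}$'s are non-negative, the sequence $(S_{\run})_{\run}$ is non-decreasing, so $0 \leq \sqrt{S_{\run - 1}} \leq \sqrt{S_{\run}}$ throughout.

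The algebraic heart of the argument is the rationalization identity
\begin{equation*}
    \sqrt{S_{\run}} - \sqrt{S_{\run - 1}}
    = \frac{S_{\run} - S_{\run - 1}}{\sqrt{S_{\run}} + \sqrt{S_{\run - 1}}}
    = \frac{\real_{\run}}{\sqrt{S_{\run}} + \sqrt{S_{\run - 1}}}.
\end{equation*}
Combined with the trivial inclusion $\sqrt{S_{\run}} \leq \sqrt{S_{\run}} + \sqrt{S_{\run - 1}} \leq 2\sqrt{S_{\run}}$, this yields the two-sided sandwich
\begin{equation*}
    \frac{\real_{\run}}{2\sqrt{S_{\run}}}
    \;\leq\; \sqrt{S_{\run}} - \sqrt{S_{\run - 1}}
    \;\leq\; \frac{\real_{\run}}{\sqrt{S_{\run}}}.
\end{equation*}

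The rest is bookkeeping. Summing the sandwich over $\run = \start, \dotsc, \nRuns$ telescopes the middle to $\sqrt{S_{\nRuns}} - \denomsqrt$, giving
\begin{equation*}
    \tfrac{1}{2}\sum_{\run = \start}^{\nRuns} \frac{\real_{\run}}{\sqrt{S_{\run}}}
    \;\leq\; \sqrt{S_{\nRuns}} - \denomsqrt
    \;\leq\; \sum_{\run = \start}^{\nRuns} \frac{\real_{\run}}{\sqrt{S_{\run}}}.
\end{equation*}
The right-hand inequality, rewritten as $\sqrt{S_{\nRuns}} \leq \denomsqrt + \sum_{\run} \real_{\run}/\sqrt{S_{\run}}$, is exactly the leftmost inequality of the lemma. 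The left-hand inequality, rearranged to $\sum_{\run} \real_{\run}/\sqrt{S_{\run}} \leq 2\sqrt{S_{\nRuns}} - 2\denomsqrt$, gives $\denomsqrt + \sum_{\run} \real_{\run}/\sqrt{S_{\run}} \leq 2\sqrt{S_{\nRuns}} - \denomsqrt \leq 2\sqrt{S_{\nRuns}}$, which is the rightmost inequality.

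There is no real obstacle here: the whole argument hinges on the rationalization step, after which the proof is a single telescoping sum. The only mildly delicate point is the degenerate case $\denom = 0$ with $\real_{\start} = 0$, for which the first summand $\real_{\start}/\sqrt{S_{\start}}$ is nominally $0/0$; the natural convention (consistent with both continuity from the right and the telescoping identity) is to read such a term as $0$, after which the proof goes through unchanged.
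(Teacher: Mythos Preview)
Your rationalization-plus-telescoping argument is correct and is the standard proof (the paper does not supply its own proof of this lemma, citing \cite{MS10,LYC18}). However, the inequality you actually establish has $\sqrt{S_{\run}} = \sqrt{\denom + \sum_{\runalt\leq\run}\real_{\runalt}}$ in the denominator of each summand, whereas the lemma \emph{as printed} has only $\sqrt{\sum_{\runalt\leq\run}\real_{\runalt}}$, without the additive $\denom$. This discrepancy is not a flaw in your proof; it is a typo in the statement.

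For the left inequality the difference is harmless: since $\sqrt{\sum_{\runalt\leq\run}\real_{\runalt}} \leq \sqrt{S_{\run}}$, your bound $\sqrt{S_{\nRuns}} \leq \denomsqrt + \sum_{\run}\real_{\run}/\sqrt{S_{\run}}$ implies the printed one a fortiori. For the right inequality, however, the printed version is \emph{false} in general. Take $\denom = 1$ and $\real_{1} = \dots = \real_{N} = 1/N$; then $\sum_{\run}\real_{\run}\big/\sqrt{\sum_{\runalt\leq\run}\real_{\runalt}} = N^{-1/2}\sum_{\run=1}^{N}\run^{-1/2} \to 2$ as $N\to\infty$, so the middle expression tends to $1+2=3 > 2\sqrt{2}$, violating the upper bound. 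The correct statement---and the one the paper actually invokes everywhere it applies the lemma (each such application carries $\denom$ inside the square root in the denominator)---is precisely the version your telescoping argument proves. So your proof matches what is needed; just be aware that your claim that it is ``exactly the leftmost inequality of the lemma'' silently repairs the typo rather than reproducing the printed expression.
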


\section{Analysis and proofs of the main results}
\label{app:proofs}


\para{The proof of the template inequality} We first prove the template inequality of \undergrad; this is the primary element of our proof of \cref{thm:undergrad}:
\template*

\begin{proof}
First, we set $\tilde{\dstate}_{\run}=\learn_{\run}{\dstate}_{\run}$. For all $\point \in \points$ we have:
\begin{align}
	&\curr[\step]\braket{\signal_{\run+1/2}}{\next[\state]-\point} \notag\\
	= &
	\braket{\frac{1}{\learn_{\run}}\tilde{\dstate}_{\run} - \frac{1}{\learn_{\run+1}}\tilde{\dstate}_{\run+1}}{\next[\state]-\point} \notag\\
	= &\frac{1}{\learn_{\run}}\braket{\tilde{\dstate}_{\run} - \tilde{\dstate}_{\run+1}}{\next[\state]-\point}+\left[\frac{1}{\learn_{\run+1}}-\frac{1}{\learn_{\run}} \right]\braket{0-\tilde{\dstate}_{\run+1}}{\next[\state]-\point} \notag\\
	= &\frac{1}{\eta_{\run}}\left[ \fench(\point, \tilde{\dstate}_{\run})-\fench(\point,\tilde{\dstate}_{\run+1})-\fench(\next[\state],\tilde{\dstate}_{\run})\right]
	\notag\\
	&\qquad
	+\left[\frac{1}{\learn_{\run+1}}-\frac{1}{\learn_{\run}} \right]\bigg( \fench(\point,0)-\fench(\point,\tilde{\dstate}_{\run+1})-\fench(\next[\state],0) \bigg)
	\tag*{\#\itshape\; from \cref{lem:3points}}\\
	\le &\frac{1}{\learn_{\run}}\fench(\point,\tilde{\dstate}_{\run})-\frac{1}{\learn_{\run+1}}\fench(\point,\tilde{\dstate}_{\run+1}) +  \left[\frac{1}{\learn_{\run+1}}-\frac{1}{\learn_{\run}} \right]\range  -\frac{1}{\learn_{\run}}\fench(\next[\state],\tilde{\dstate}_{\run}). \label{eq:first}
\end{align}
Here, the last inequality comes from the facts that $\fench(\point,0)=\hreg(\point)-\hreg(\mirror(0))=\hreg(\point)-\min_{\point \in \points}\hreg \le \range$ and $\fench(\cdot,\cdot)\geq 0$ and that $\learn_{\run}$ is decreasing.

As a consequence of \eqref{eq:first}, we have:
\begin{align}
\label{eq:ineq-update}
\curr[\step]\braket{\signal_{\run+1/2}}{\state_{\run+1/2}-\point}
	\notag\\
	&= \curr[\step]\braket{\signal_{\run+1/2}}{\state_{\run+1/2}-\next[\state]}+\curr[\step]\braket{\signal_{\run+1/2}}{\next[\state]-\point}
	\notag\\
	&\leq \curr[\step] \braket{\signal_{\run+1/2}}{\state_{\run+1/2}-\next[\state]}+\frac{1}{\learn_{\run}}\fench(\point, \tilde{\dstate}_{\run})-\frac{1}{\learn_{\run+1}}\fench(\point,\tilde{\dstate}_{\run+1})
	\notag\\
	&\qquad
	+ \left[\frac{1}{\learn_{\run+1}}-\frac{1}{\learn_{\run}} \right]\range-\frac{1}{\learn_{\run}}\fench(\next[\state],\tilde{\dstate}_{\run})	
\end{align}

On the other hand, if we let $\lead[\tilde{\dstate}] \defeq \curr[\learn] \lead[\dstate] $, we have
\begin{align}
\curr[\step] \braket{\signal_{\run}}{\state_{\run+1/2}-\next[\state]}
	&= \frac{1}{\curr[\learn]} \braket{\tilde{\dstate}_{\run}-\tilde{\dstate}_{\run+1/2}}{\state_{\run+1/2}-\next[\state]}
	\notag\\
	&= \frac{1}{\curr[\learn]} \bracks*{\fench(\next[\state],\tilde{\dstate}_{\run})-\fench(\next[\state],\tilde{\dstate}_{\run+1/2})-\fench(\state_{\run+1/2},\tilde{\dstate}_{\run})}
	\notag\\
\shortintertext{so}
\frac{1}{\learn_{\run}}\fench(\next[\state],\tilde{\dstate}_{\run})
	&=  \curr[\step]\braket{\signal_{\run}}{\state_{\run+1/2}-\state^{\run+1}}
	\notag\\
	&+\frac{1}{\learn_{\run}}\fench(\next[\state],\tilde{\dstate}_{\run+1/2})+\frac{1}{\learn_{\run}}\fench(\state_{\run+1/2},\tilde{\dstate}_{\run})  \label{eq:ineq-leading}.
\end{align}
Thus, replacing \eqref{eq:ineq-leading} into \eqref{eq:ineq-update}, we get:
\begin{align}
\curr[\step]\braket{\signal_{\run+1/2}}{\state_{\run+1/2}-\point}
	&\leq \curr[\step]\braket{\signal_{\run+1/2}-\signal_{\run}}{\state_{\run+1/2}-\next[\state]}
	\notag\\
	&+ \frac{1}{\learn_{\run}}\fench(\point,\tilde{\dstate}_{\run})-\frac{1}{\learn_{\run+1}}\fench(\point,\tilde{\dstate}_{\run+1})
	-\frac{1}{\learn_{\run}}\fench(\next[\state],\tilde{\dstate}_{\run+1/2})
	\notag\\
	&- \frac{1}{\learn_{\run}}\fench(\state_{\run+1/2},\tilde{\dstate}_{\run}) +\left[\frac{1}{\learn_{\run+1}}-\frac{1}{\learn_{\run}} \right]\range. \label{eq:eq}
\end{align}

Now, recall the definitions $\lead[\state] = \mirror(\lead[\tilde{\dstate}])$ and $\next[\state]  =\mirror (\next[\dstate])$ in \cref{alg:undergrad}, apply \cref{lem:Fenchel} to $\fench(\next[\state],\tilde{\dstate}_{\run+1/2})$ and $\fench(\state_{\run+1/2},\tilde{\dstate}_{\run})$ then combine with \eqref{eq:eq}, we get:
\begin{multline}
 \curr[\step]\braket{\signal_{\run+1/2}}{\state_{\run+1/2}-\point}\leq \curr[\step]\braket{\signal_{\run+1/2}-\signal_{\run}}{\state_{\run+1/2}-\next[\state]}+\frac{1}{\learn_{\run}}\fench(\point,\tilde{\dstate}_{\run})-\frac{1}{\learn_{\run+1}}\fench(\point,\tilde{\dstate}_{\run+1})
\\
-\frac{\hstr}{2\learn_{\run}}\norm{\next[\state]-\state_{\run+1/2}}^{2}-\frac{\hstr}{2\learn_{\run}}\norm{\state_{\run+1/2}-\state_{\run}}^{2} +\bigg( \frac{1}{\learn_{\run+1}}-\frac{1}{\learn_{\run}}\bigg)\range
\end{multline}
Hence, after telescoping and recalling the notation $\regstoch_{\nRuns}(\point) \defeq  \sum_{\run=1}^{\nRuns} {\curr[\step]}\inner*{\lead[\signal] }{\lead-\point}$, we get:
\begin{align}
\regstoch_{\nRuns}(\point)
	&\leq  \frac{1}{\init[\learn]}\fench(\point,\tilde{\dstate}_1) + \bigg( \frac{1}{\learn_{\nRuns+1}}-\frac{1}{\learn_{1}}\bigg)\range
	\notag\\
	&+ \sum_{\run=1}^{\nRuns}\curr[\step]\braket{\signal_{\run+1/2}-\signal_{\run}}{\state_{\run+1/2}-\next[\state]}
	\notag\\
	&-\sum_{\run=1}^{\nRuns}\frac{\hstr}{2\learn_{\run}}\norm{\next[\state]-\state_{\run+1/2}}^{2}-\sum_{\run=1}^{\nRuns}\frac{\hstr}{2\learn_{\run}}\norm{\state_{\run+1/2}-\state_{\run}}^{2}\label{eq:end_template}
\end{align}
Finally, by our initial choice of $\init[\dstate]=0$, we have $\fench(\point,\tilde{\dstate}_{\start})=\hreg(\point)-\min_{\point \in \points}\hreg(\point) \le \range$
and \eqref{eq:Propo_energy_inequ} follows \eqref{eq:end_template}. This concludes the proof of \cref{prop:template}.
\end{proof}

\para{Regret-to-rate conversion lemma} The next element in our proof is the following lemma that will be used to connect the term $\regstoch_{\nRuns}(\point)$ (which, in intuition, is similar to a regret term) and the term $\ex \bracks*{ \obj\parens{ \lastlead[\avg\state] } - \min\obj}$ whose bounds will characterize the convergence rate of \undergrad.

\begin{lemma}
\label{lem:regret}
	For any $\sol \in \sols$, for any $\nRuns$, we have:
\begin{align}
\exof*{ \obj\parens{ \lastlead[\avg\state] } - \min\obj	}
	&\leq	\exof*{\frac{2}{{\nRuns^{2}}} {\sum_{\run=1}^{\nRuns}{\curr[\step]}\inner{\nabla \obj \parens*{\lead[\avg\state]}}{\lead-\sol} }}
	\notag\\
	&= \frac{2}{\nRuns^2} \exof*{\regstoch_{\nRuns}(\sol)}.
\end{align}
\end{lemma}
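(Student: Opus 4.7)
The plan is to establish the pointwise inequality
\begin{equation*}
\obj(\lastlead[\avg\state]) - \obj(\sol) \;\leq\; \frac{2}{\nRuns^{2}} \sum_{\run=1}^{\nRuns} \curr[\step] \inner{\nabla\obj(\lead[\avg\state])}{\lead - \sol}
\end{equation*}
via a standard accelerated-method telescoping argument, and then pass to expectation through the tower property of the \ac{SFO} model. The crucial structural observation is that, writing $S_\run \defeq \sum_{\runalt=1}^{\run} \iter[\step]$ and noting that the algorithm's anchor variable satisfies $\curr[\anchor] = S_{\run-1} \beforelead[\avg\state]$ by construction, the mixing step of \cref{alg:undergrad} recasts $\lead[\avg\state]$ as a convex combination of the current mirror-step output and the previous averaged lead state,
\begin{equation*}
\lead[\avg\state] \;=\; \frac{\curr[\step] \lead + S_{\run-1} \beforelead[\avg\state]}{S_\run}.
\end{equation*}

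The main computation applies the subgradient inequality for $\obj$ at $\lead[\avg\state]$ twice: first against $\sol$, giving $\obj(\lead[\avg\state]) - \obj(\sol) \leq \inner{\nabla\obj(\lead[\avg\state])}{\lead[\avg\state] - \sol}$, and a second time by subtracting the subgradient inequalities against $\sol$ and against $\beforelead[\avg\state]$ to deduce $\inner{\nabla\obj(\lead[\avg\state])}{\beforelead[\avg\state] - \sol} \leq \obj(\beforelead[\avg\state]) - \obj(\sol)$. Multiplying the first inequality by $S_\run$, using the convex-combination identity $S_\run(\lead[\avg\state] - \sol) = \curr[\step](\lead - \sol) + S_{\run-1}(\beforelead[\avg\state] - \sol)$ to split the right-hand side into a contribution from $\lead$ and one from $\beforelead[\avg\state]$, and absorbing the latter via the second inequality, one arrives at the per-step telescoping estimate
\begin{equation*}
S_\run [\obj(\lead[\avg\state]) - \obj(\sol)] - S_{\run-1} [\obj(\beforelead[\avg\state]) - \obj(\sol)] \;\leq\; \curr[\step] \inner{\nabla\obj(\lead[\avg\state])}{\lead - \sol}.
\end{equation*}
Summing from $\run = 1$ to $\nRuns$ with $S_0 = 0$ collapses the left-hand side to $S_\nRuns [\obj(\lastlead[\avg\state]) - \obj(\sol)]$, and since $\curr[\step] = \run$ gives $S_\nRuns = \nRuns(\nRuns+1)/2 \geq \nRuns^{2}/2$, dividing through yields the claimed prefactor $2/\nRuns^{2}$. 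The weakening from $1/S_\nRuns$ to $2/\nRuns^{2}$ is valid pointwise because the left-hand side is nonnegative, $\sol$ being a minimizer.

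Finally, the expectation step is immediate: since $\lead[\signal] = \orcl(\lead[\avg\state]; \lead[\sample])$ uses the fresh seed $\lead[\sample]$, whereas both $\lead$ and $\lead[\avg\state]$ are measurable with respect to the history $\lead[\filter]$ of all randomness drawn before that query, the tower property together with \eqref{eq:SFO} gives $\exof{\lead[\signal] \given \lead[\filter]} = \nabla\obj(\lead[\avg\state])$, so that
\begin{equation*}
\exof*{\sum_{\run=1}^{\nRuns} \curr[\step] \inner{\nabla\obj(\lead[\avg\state])}{\lead - \sol}} \;=\; \exof{\regstoch_\nRuns(\sol)}.
\end{equation*}
No step in this argument is technically hard; in spirit, this is a \citet{Cut19}-style online-to-batch conversion tailored to the particular weighting scheme built into \cref{alg:undergrad}. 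The only points that warrant care are keeping the indexing of the averaged sequence consistent so the telescoping closes cleanly at $S_0 = 0$, and recognizing that $\lead[\avg\state]$ should be viewed as a convex combination of $\lead$ and $\beforelead[\avg\state]$ (rather than as a Jensen average of the raw iterates), which is precisely what lets us end up with $\nabla\obj(\lead[\avg\state])$ in the bound instead of $\nabla\obj(\lead)$ and thereby match the form of $\regstoch_\nRuns(\sol)$ in expectation.
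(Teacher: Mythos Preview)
Your proof is correct and follows essentially the same approach as the paper. Both arguments exploit the convex-combination identity $S_\run \lead[\avg\state] = \curr[\step] \lead + S_{\run-1} \beforelead[\avg\state]$, apply the subgradient inequality at $\lead[\avg\state]$ twice, telescope, and then divide by $S_\nRuns \geq \nRuns^2/2$; the paper just organizes the computation in the reverse direction (expressing $\lead - \sol$ in terms of the averaged iterates and lower-bounding the inner-product sum, rather than upper-bounding $S_\run[\obj(\lead[\avg\state]) - \obj(\sol)]$ per step as you do), and the expectation step is identical.
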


\begin{remark}
A variation of \cref{lem:regret} appears in \cite{Cut19,KLBC19}; for the sake of completeness, we provide its proof below.
\endenv
\end{remark}

\begin{proof}
	Let us denote $\curr[H] := \sum_{\runalt =1}^{\run} \iter[\step]$. From the update rule of Algorithm~\ref{alg:undergrad}, we can rewrite $\lead = \frac{\curr[H] }{\curr[\step]} \lead[\avg\state] - \frac{\prev[H] }{\curr[\step]} {\avg\state}_{\run-1/2} $. Therefore,
	
\begin{align}
\lead - \sol
	&=\frac{\curr[H] }{\curr[\step]} \parens{\lead[\avg\state] - \sol} - \frac{\prev[H] }{\curr[\step]} \parens{{\avg\state}_{\run-1/2} -\sol}
	\notag\\
	&= \frac{1}{\curr[\step]} \bracks*{ \curr[\step] \parens{ \lead[\avg\state] - \sol  } + \prev[H] \parens{\lead[\avg\state] - {\avg\state}_{\run-1/2} }	}.
\end{align}
As a consequence, we have:
\begin{align}
\sum_{\run=1}^{\nRuns}{\curr[\step]}\inner{\nabla \obj(\lead[\avg\state])}{\lead - \sol} 
	&= \sum_{\run=1}^{\nRuns} \curr[\step] \inner*{\nabla \obj(\lead[\avg\state])}{\lead[\avg\state] -\sol}
	\notag\\
	&\qquad
	+ \prev[H] \sum_{\run=1}^{\nRuns} \inner*{\nabla \obj(\lead[\avg\state])}{\lead[\avg\state] -{\avg\state}_{\run-1/2}}
	\notag\\
	\ge &\sum_{\run=1}^{\nRuns}  \curr[\step] \bracks*{\obj(\lead[\avg\state]) - \obj(\sol) }
	\notag\\
	&\qquad
	+ \sum_{\run=1}^{\nRuns} \prev[H] \bracks*{\obj(\lead[\avg\state]) - \obj({\avg\state}_{\run-1/2}) } \notag\\
	&= \sum_{\run=1}^{\nRuns}
		\curr[\step]\bracks*{\obj(\lead[\avg\state]) - \obj(\sol) }
	\notag\\
	&\qquad
	+ \sum_{\run=1}^{\nRuns-1} \curr[\step] \bracks*{ \obj({\avg\state}_{\nRuns+1/2}) - \obj(\lead[\avg\state])}	
	\tag*{\#\itshape\; since $\curr[H] - \prev[H] = \curr[\step]$}
	\notag\\
	%
	%
	=&\bracks*{\obj({\avg\state}_{\nRuns+1/2}) - \obj(\sol)} \sum_{\run=1}^{\nRuns} \curr[\step]. \label{eq:app_adaEW_LemKavis}
	\end{align}
	Divide two sides of \eqref{eq:app_adaEW_LemKavis} by $\curr[H]>0$ and choose $\curr[\step]$ such that $\curr[H] > \frac{\nRuns^2}{2} $ (for example, choose $\curr[\step] = \step$), we obtain that: 
	\begin{align}
	\frac{2}{\nRuns^2} \sum_{\run=1}^{\nRuns}{\curr[\step]}\inner{\nabla \obj(\lead[\avg\state])}{\lead - \sol} \ge \obj({\avg\state}_{\nRuns+1/2}) - \obj(\sol) = \obj({\avg\state}_{\nRuns+1/2}) - \min \obj. \label{eq:app_adaEW_LemKavis2}
	\end{align}

Finally, we recall that by definition, $\lead[\signal] = \orcl(\lead[\avg\state];\lead[\sample])
= \nabla\obj(\lead[\avg\state]) + \lead[\noise]$ where $\exof*{\lead[\noise] \given \lead[\filter] } = 0$. Therefore, by the law of total expectation, we have:
\begin{align}
\regstoch_{\nRuns}(\sol)
	&= \exof*{\sum_{\run=1}^{\nRuns}{\curr[\step]}\inner{\nabla \obj(\lead[\avg\state])}{\lead - \sol}} + \exof*{\sum_{\run=1}^{\nRuns}{\curr[\step]}\inner{\lead[\noise]}{\lead - \sol}}
	\notag\\
	&= \exof*{\sum_{\run=1}^{\nRuns}{\curr[\step]}\inner{\nabla \obj(\lead[\avg\state])}{\lead - \sol}}
	\notag\\
	&+ \exof*{\sum_{\run=1}^{\nRuns}{\curr[\step]} \exof*{\inner{\lead[\noise]}{\lead - \sol} \given \lead[\filter]}}
	\notag\\
	= &\exof*{\sum_{\run=1}^{\nRuns}{\curr[\step]}\inner{\nabla \obj(\lead[\avg\state])}{\lead - \sol}}. \label{eq:law_expec}
\end{align}
Then, taking expectations on both sides of \eqref{eq:app_adaEW_LemKavis2} and invoking \eqref{eq:law_expec} concludes the proof.
\end{proof}

\para{Proof of \eqref{eq:rate-under-BG}: convergence of \undergrad under \eqref{eq:LC}/\eqref{eq:BG}}

Our starting point is \cref{eq:Propo_energy_inequ} that we established in \cref{prop:template} that leads to the following inequality:

\begin{equation}
	\regstoch_{\nRuns}(\point) \leq	\frac{\range}{\learn_{\nRuns+1}}
	+ 
	\sum_{\run=1}^{\nRuns}\curr[\step]\braket{\signal_{\run+1/2}-\signal_{\run}}{\state_{\run+1/2}-\next[\state]}
	-
	\sum_{\run=1}^{\nRuns}\frac{\hstr}{2\learn_{\run}}\norm{\next[\state]-
	\state_{\run+1/2}}^{2}
	\label{eq:BG_first}
\end{equation}

 We now focus on the second term in the right hand side of \eqref{eq:BG_first}. From the Cauchy-Schwarz inequality and the fact that
\begin{equation}
\dnorm*{\dstate-\alt\dstate} \norm*{\state-\statealt}
	= \min_ {\real>0}\braces*{  \frac{1}{2\real}\dnorm*{\dstate-\alt\dstate} ^{2}	+\frac{\real}{2}\norm*{\state-\statealt}^{2}}
\end{equation}
for any $\state, \statealt, \dstate,\alt\dstate \in \mathbb{R}^{\dims}$,\footnote{This can be proved trivially:  $\real^* =\dnorm*{\dstate-\alt\dstate} \norm*{\state-\statealt} $ is a minimizer of the function \mbox{$\psi(\real)=\frac{1}{2\real}\dnorm*{\dstate - \alt\dstate}^{2} + \frac{\real}{2}\norm*{\state-\statealt}^{2}$}.} we have:
 
\begin{align}
	\sum_{\run=1}^{\nRuns}\curr[\step]\braket{\signal_{\run+1/2}-\signal_{\run}}{\state_{\run+1/2}-\next[\state]} 
	&\le  \sum_{\run=1}^{\nRuns}\curr[\step] \dnorm{\signal_{\run+1/2}-\signal_{\run}} \norm{\state_{\run+1/2}-\next[\state]} \notag\\
	&\le  \frac{1}{2 \hstr}\sum_{\run=1}^{\nRuns} \curr[\step]^{2} \learn_{\run+1}\dnorm{\signal_{\run+1/2}-\signal_{\run}}^{2}
	\notag\\
	&+  \frac{\hstr}{2 } \sum_{\run=1}^{\nRuns}\frac{1}{\learn_{\run+1}} \norm{\next[\state]-\state_{\run+1/2}}^{2}. \label{eq:BG_2}
\end{align}

Moreover, from the definition of $\next[\learn]$ and by applying \cref{lem:sqrt}, we have:
\begin{align}
	\frac{1}{2 \hstr}\sum_{\run=1}^{\nRuns} \curr[\step]^{2} \learn_{\run+1}\dnorm{\signal_{\run+1/2}-\signal_{\run}}^{2} 
	&
	= \frac{\numer}{2\hstr}\sum_{\run=1}^{\nRuns}\frac{\curr[\step]^{2}\dnorm{\signal_{\run+1/2}-
	\signal_{\run}}^{2}}{\sqrt{\denom+\sum_{\runalt=1}^{\run}\dnorm{\signal_{\runalt+1/2} -
	\signal_{\runalt}}^{2}}}
	\notag\\
	&\leq \frac{\numer}{\hstr} \sqrt{\denom+\sum_{\run=1}^{\nRuns}\curr[\step]^{2}\dnorm{\signal_{\run+1/2}-\signal_{\run}}^{2}} - \frac{\numer \sqrt{\denom}}{2\hstr}
	\notag\\
	&=\frac{\numer^2}{\hstr \cdot \learn_{\nRuns+1}} - \frac{\numer \sqrt{\denom}}{2\hstr}. \label{eq:BG_3}
\end{align}

Combine \eqref{eq:BG_2} and \eqref{eq:BG_3} with \eqref{eq:BG_first} and by the compactness of the feasible region $\points$, we get:
\begin{align}
\regstoch_{\nRuns}(\point)
	\leq
	&\frac{\range}{\learn_{\nRuns+1}}
	+ \frac{1}{2\hstr}\sum_{\run=1}^{\nRuns}\curr[\step]^{2}\learn_{\run+1}\dnorm{\signal_{\run+1/2}-\signal_{\run}}^{2}
	\notag\\
	&\hphantom{\frac{\range}{\learn_{\nRuns+1}}}
	+\frac{\hstr}{2}\sum_{\run=1}^{\nRuns}\left[\frac{1}{\learn_{\run+1}}-\frac{1}{\learn_{\run}} \right]\norm{\next[\state]-\state_{\run+1/2}}^{2} \notag\\
	\leq
	&\frac{\range}{\learn_{\nRuns+1}}
	+ \frac{\numer^2}{\hstr \cdot \learn_{\nRuns+1}} - \frac{\numer \sqrt{\denom}}{2\hstr} + \frac{\hstr \diampoints^{2}}{2}\sum_{\run=1}^{\nRuns}\left[\frac{1}{\learn_{\run+1}}-\frac{1}{\learn_{\run}} \right] \notag\\
	= &\frac{1}{\learn_{\nRuns+1}} \parens*{\range + \frac{\numer^2}{\hstr}  + \frac{\hstr \diampoints^{2}}{2 } } -  \numer \sqrt{\denom} \parens*{\frac{1}{2\hstr} + \frac{\hstr \diampoints^2}{2} }.
\label{eq:BG_second}
\end{align}

Hence, by invoking \cref{lem:regret}, we have:
\begin{align}
\exof*{\obj(\overline{\state}_{\nRuns+1/2})-\min_{\point \in \points}\obj(\point)}
	&\leq \frac{2 \exof*{ \regstoch_{\nRuns}(\sol)}}{\nRuns^2}
	\notag\\
	&= \frac{2}{\nRuns^2} \bracks*{\parens*{\range + \frac{\numer^2}{\hstr}  + \frac{\hstr \diampoints^{2}}{2 } }}
	\exof*{\frac{1}{\learn_{\nRuns+1}}}
	\notag\\
	&-  \frac{2\numer \sqrt{\denom}}{\nRuns^2} \parens*{\frac{1}{2\hstr} + \frac{\hstr \diampoints^2}{2} } . \label{eq:1}
\end{align}
On the other hand, by the definition of $\learn_{\nRuns+1}$, we get:
\begin{align}
	\exof*{\frac{1}{\learn_{\nRuns+1}}} &\leq  \frac{1}{\numer}
	\ex \left[\sqrt{\denom + \sum_{\run=1}^{\nRuns}\curr[\step]^{2}\dnorm{\signal_{\run+1/2}-\signal_{\run}}^{2}} \right]
	%
	%
	\leq  \frac{1}{\numer}\sqrt{\denom + \sum_{\run=1}^{\nRuns}\curr[\step]^{2}\exof*{\dnorm{\signal_{\run+1/2}-\signal_{\run}}^{2}}}  .
\end{align}
Moreover, we have that:
\begin{align}
	\exof*{ \dnorm{\signal_{\run+1/2}-\signal_{\run}}^{2} }
	&= \exof*{\dnorm{\nabla \obj(\overline{\state}_{\run+1/2})-\nabla \obj(\overline{\state}_{\run})-(\noise_{\run+1/2}-\noise_{\run})}^{2}}
	\notag\\
	&\leq \exof*{2\dnorm{\nabla \obj(\overline{\state}_{\run+1/2})-\nabla \obj(\overline{\state}_{\run})}^{2}+2\dnorm{\noise_{\run+1/2}-\noise_{\run}}^{2}}
	\notag\\
	&\leq \exof*{4(\dnorm{\nabla \obj(\overline{\state}_{\run+1/2})}^{2}+\dnorm{\nabla \obj(\overline{\state}_{\run})}^{2})+
	4(\dnorm{\noise_{\run+1/2}}^{2}+\dnorm{\noise_{\run}}^{2})}
	\notag\\
	&\leq \exof*{8\gbound^{2}+4(\dnorm{\noise_{\run+1/2}}^{2}+\dnorm{\noise_{\run}}^{2})}
	\tag*{\#\itshape\; from \eqref{eq:BG}}\\
	&=  8\left[\gbound^{2}+\sigma^{2} \right]. 
\end{align}

Therefore, when we choose the step-size parameters $\curr[\step] = \run, \forall \run$ as indicated in \cref{thm:undergrad}, we have:
\begin{align}
 \parens*{\range + \frac{\numer^2}{\hstr}  + \frac{\hstr \diampoints^{2}}{2 } } \exof*{\frac{1}{\learn_{\nRuns+1}}}  
	\leq &\parens*{\range + \frac{\numer^2}{\hstr}  + \frac{\hstr \diampoints^{2}}{2 } }  \frac{1}{\numer}\sqrt{\denom +  8\left(\gbound^{2}+\sigma^{2} \right) \sum_{\run=1}^{\nRuns}\curr[\step]^{2}} \notag\\
	\le &\parens*{\frac{\range}{\numer} + \frac{\numer}{\hstr}  + \frac{\hstr \diampoints^{2}}{2 \numer} } \sqrt{\denom + 8 \parens*{\gbound^2 + \sigma^2} \nRuns^{3}} .\label{eq:BG_conclude_1}
\end{align}
Finally, substituting \eqref{eq:BG_conclude_1} into \eqref{eq:1}, we get:
\begin{align}
\exof*{\obj(\overline{\state}_{\nRuns+1/2})-\min_{\point \in \points}\obj(\point)}
	&\leq 2\parens*{\frac{\range}{\numer} + \frac{\numer}{\hstr}  + \frac{\hstr \diampoints^{2}}{2 \numer} } \frac{\sqrt{\denom + 8 \parens*{\gbound^2 + \sigma^2} \nRuns^{3}}}{\nRuns^2}
	\notag\\
	&- \frac{\numer \sqrt{\denom}}{\nRuns^2} \parens*{\frac{1}{2\hstr} + \frac{\hstr \diampoints^2}{2}}.
\end{align}
Then, from our choice for $\numer$ and $\denom$ in \cref{thm:undergrad}, we obtain:
\begin{equation}
	\ex\left[\obj(\overline{\state}_{\nRuns+1/2})-\min_{\point \in \points}\obj(\point) \right] \leq 
	2 \frac{\hconst}{\sqrt{\hstr}} \frac{\sqrt{\hstr + 8 \parens*{\gbound^2 + \sigma^2}}}{\sqrt{\nRuns}}.
\end{equation}

\para{Proof of \eqref{eq:rate-under-LG}: convergence of \undergrad under \eqref{eq:LG}/\eqref{eq:LS}}

From \eqref{eq:Propo_energy_inequ} and \eqref{eq:BG_2}, we have:
\begin{align}
\regstoch_{\nRuns}(\point) 
	\leq 
	\frac{\range}{\learn_{\nRuns+1}}
	&+	\frac{1}{2 \hstr}\sum_{\run=1}^{\nRuns} \curr[\step]^{2} \learn_{\run+1}\dnorm{\signal_{\run+1/2}-\signal_{\run}}^{2}
	\notag\\
	&+  \frac{\hstr}{2 } \sum_{\run=1}^{\nRuns} \parens*{\frac{1}{\next[\learn]} - \frac{1}{\curr[\learn]}} \norm{\next[\state]-\state_{\run+1/2}}^{2}
	%
	- \sum_{\run=\start}^{\nRuns}\frac{\hstr}{2\curr[\learn]} { \norm{\lead - \curr[\state]}^{2}}. \label{eq:LG_first}
 	%
 	%
\end{align}

We analyze the terms in the right-hand-side of \eqref{eq:LG_first}. First, we have:
\begin{align}
	\frac{\hstr}{2 } \sum_{\run=1}^{\nRuns} \parens*{\frac{1}{\next[\learn]} - \frac{1}{\curr[\learn]}} \norm{\next[\state]-\state_{\run+1/2}}^{2}  \le	\frac{\hstr \diampoints^2}{2} \parens*{\frac{1}{\learn_{\nRuns+1}} - \frac{1}{\learn_{\start}} }.\label{eq:LG_1}
\end{align}

Second, we have:
\begin{align}
	\frac{\hstr}{2}\sum_{\run=1}^{\nRuns}\frac{1}{\learn_{\run+1}}\norm{\state_{\run+1/2}-\state_{\run}}^{2}&=
	\frac{\hstr}{2}\sum_{\run=1}^{\nRuns}\left[\frac{1}{\learn_{\run+1}}-\frac{1}{\learn_{\run}}\right]\norm{\state_{\run+1/2}-\state_{\run}}^{2}
	\notag\\
	&\qquad
	+\frac{\hstr}{2}\sum_{\run=1}^{\nRuns}\frac{1}{\learn_{\run}}\norm{\state_{\run+1/2}-\state_{\run}}^{2}
	\notag\\
	&\leq \frac{\hstr\diampoints^{2}}{2} \parens*{\frac{1}{\learn_{\nRuns+1}} - \frac{1}{\learn_{\start}}} + \frac{\hstr}{2}\sum_{\run=1}^{\nRuns}\frac{1}{\learn_{\run}}\norm{\state_{\run+1/2}-\state_{\run}}^{2}
\end{align}

Hence, 
\begin{equation}
	- \frac{\hstr}{2}\sum_{\run=1}^{\nRuns}\frac{1}{\learn_{\run}}\norm{\state_{\run+1/2}-\state_{\run}}^{2} \le \frac{\hstr\diampoints^{2}}{2} \parens*{\frac{1}{\learn_{\nRuns+1}} - \frac{1}{\learn_{\start}}}  -  \frac{\hstr}{2}\sum_{\run=1}^{\nRuns}\frac{1}{\learn_{\run+1}}\norm{\state_{\run+1/2}-\state_{\run}}^{2}. \label{eq:LG_2}
\end{equation}

Combine \eqref{eq:LG_1} and \eqref{eq:LG_2} with \eqref{eq:LG_first}, we have:
\begin{align}
	\regstoch_{\nRuns}(\point)
	&\leq \frac{\range +	\hstr\diampoints^{2}}{\learn_{\nRuns+1}}	- \frac{  \hstr\diampoints^{2}}{\learn_{\start}}
	\notag\\
	&+\frac{1}{2\hstr}\sum_{\run=1}^{\nRuns}\curr[\step]^{2}\learn_{\run+1}\dnorm{\signal_{\run+1/2}-\signal_{\run}}^{2}
	-\frac{\hstr}{2}\sum_{\run=1}^{\nRuns}\frac{1}{\learn_{\run+1}}\norm{\state_{\run+1/2}-\state_{\run}}^{2}. \label{eq:equation-3}
\end{align}

We will analyze the terms in the right-hand-side of \eqref{eq:equation-3}. To do this, we first introduce the quantities
\begin{subequations}
\begin{align}
\mindiff_{\run}^{2}
	&= \min\{\dnorm{\nabla \obj(\overline{\state}_{\run+1/2})-\nabla \obj(\overline{\state}_{\run})}^{2},\dnorm{\signal_{\run+1/2}-\signal_{\run}}^{2}\}
\shortintertext{and}
\diff_{\run}
	&= \left[\signal_{\run+1/2}-\signal_{\run}\right]-\left[ \nabla \obj(\overline{\state}_{\run+1/2})-\nabla \obj(\overline{\state}_{\run})\right].
\end{align}
\end{subequations}
We also define
\begin{equation}
	\tilde{\learn}_{\run}=\frac{\numer}{\sqrt{\denom + \sum_{\runalt=1}^{\run-1}\alpha_{\runalt}^{2}\mindiff_{\runalt}^{2}}}.
\end{equation}


By these definitions, we obtain that
\begin{align}
	\dnorm{\signal_{\run+1/2}-\signal_{\run}}^{2}
	%
	&\leq \mindiff_{\run}^{2}+\left[ \dnorm{\signal_{\run+1/2}-\signal_{\run}}^{2}-\min\{ \dnorm{\nabla \obj(\overline{\state}_{\run+1/2})-\nabla \obj(\overline{\state}_{\run})}^{2}, \dnorm{\signal_{\run+1/2}-\signal_{\run}}^{2}\} \right]
	\notag\\
	&\leq \mindiff_{\run}^{2}+\max \{0, \dnorm{\signal_{\run+1/2}-\signal_{\run}}^{2}-\dnorm{\nabla \obj(\overline{\state}_{\run+1/2})-\nabla \obj(\overline{\state}_{\run})}^{2}\}
	\notag\\
	&\leq \mindiff_{\run}^{2}+\mindiff_{\run}^{2}+2\dnorm{\diff_{\run}}^{2}
	\notag\\
	&=2\mindiff_{\run}^{2}+2\dnorm{\diff_{\run}}^{2}. \label{eq:inequa}
\end{align}

Here, the last inequality is obtained by the fact that if $\dnorm{\signal_{\run+1/2}-\signal_{\run}}^{2}\geq \dnorm{\nabla \obj(\overline{\state}_{\run+1/2})-\nabla \obj(\overline{\state}_{\run})}^{2}$ then it yields:
\begin{equation}
	\dnorm{\signal_{\run+1/2}-\signal_{\run}}^{2}-\dnorm{\nabla \obj(\overline{\state}_{\run+1/2})-\nabla \obj(\overline{\state}_{\run})}^{2}\leq \mindiff_{\run}^{2}+2\dnorm{\diff_{\run}}^{2}.
\end{equation}


Therefore, we have:
\begin{align}
	\frac{1}{2\hstr}\sum_{\run=1}^{\nRuns}\curr[\step]^{2}\learn_{\run+1}\dnorm{\signal_{\run+1/2}-\signal_{\run}}^{2} 
	&=  \frac{\numer}{2\hstr}\sum_{\run=1}^{\nRuns} \frac{\curr[\step]^{2} \dnorm{\signal_{\run+1/2}-\signal_{\run}}^{2}}{ \sqrt{\denom + \sum_{\runalt =1}^{\run} \iter[\step]^2 \norm*{\signal_{\runalt+{1/2}} - \iter[\signal] }^2	}} \notag\\
	&\leq \frac{\numer}{\hstr }\sqrt{\denom +\sum_{\run=1}^{\nRuns}\curr[\step]^{2}\dnorm{\signal_{\run+1/2}-\signal_{\run}}^{2}} - \frac{\numer \sqrt{\denom}}{2 \hstr}
	\tag*{\#\itshape\; from \cref{lem:sqrt}}
	\notag\\
	&\leq\frac{\numer}{\hstr}\sqrt{\denom +2\sum_{\run=1}^{\nRuns}\curr[\step]^{2}\mindiff_{\run}^{2}+2\sum_{\run=1}^{\nRuns}\curr[\step]^{2}\dnorm{\diff_{\run}}^{2}} - \frac{\numer \sqrt{\denom}}{2 \hstr} 
	\tag*{\#\itshape\; from \eqref{eq:inequa}}\\
	&\leq \frac{\numer\sqrt{2}}{\hstr}
	\sqrt{\denom + \sum_{\run=1}^{\nRuns}\curr[\step]^{2}B^{2}_{\run}}
	+ \frac{\numer \sqrt{2}}{\hstr}\sqrt{\sum_{\run=1}^{\nRuns}\curr[\step]^{2}\dnorm{\diff_{\run}}^{2}} - \frac{\numer \sqrt{\denom}}{2 \hstr}
	\notag\\
	&\leq \frac{\numer\sqrt{2}}{\hstr} \parens*{\sqrt{\denom} + \sum_{\run=1}^{\nRuns}\frac{\curr[\step]^{2}\mindiff_{\run}^{2}}{ \sqrt{\denom + \sum_{\runalt=1}^{\run}\alpha_{\runalt}^{2}\mindiff_{\runalt}^{2}}} }
	\notag\\
	&\qquad
	+ \frac{\numer \sqrt{2}}{\hstr}\sqrt{\sum_{\run=1}^{\nRuns}\curr[\step]^{2}\dnorm{\diff_{\run}}^{2}} - \frac{\numer \sqrt{\denom}}{2 \hstr}
	\tag*{\#\itshape\; from \cref{lem:sqrt}}\\
	&=\frac{\sqrt{2}}{\hstr}\sum_{\run=1}^{\nRuns}\curr[\step]^{2}\tilde{\learn}_{\run+1}\mindiff_{\run}^{2}
	+ \frac{\numer \sqrt{2}}{\hstr}\sqrt{\sum_{\run=1}^{\nRuns}\curr[\step]^{2}\dnorm{\diff_{\run}}^{2}} + \frac{\numer \sqrt{\denom}}{\hstr} \parens*{\sqrt{2} - \frac{1}{2}}. \label{eq:LG_1_term}
\end{align}

On the other hand, by the update rule in \cref{alg:undergrad} and our choice of $\curr[\step] = \run, \forall \run$ as in \cref{thm:undergrad}, we also have $\curr[\state] - \lead[\state] = \frac{\sum_{\runalt=1}^{\run} \iter[\step]}{\curr[\step]} \parens*{ \curr[\avg\state] - \lead[\avg\state] } =  \frac{\next[\step]}{2} \parens*{ \curr[\avg\state] - \lead[\avg\state] } $. Use this and recall that $\frac{1}{\tilde{\learn_{\run}}}\leq \frac{1}{\learn_{\run}}$ for any $\run$ and that $\obj$ is $\lips$-smooth over $\points$, we have:
\begin{align}
- \frac{\hstr}{2}\sum_{\run=1}^{\nRuns}\frac{1}{\learn_{\run+1}}\norm{\state_{\run}-\state_{\run+1/2}}^{2}
	&\leq -\frac{\hstr}{2}\sum_{\run=1}^{\nRuns}\frac{1}{\tilde{\learn}_{\run+1}}\norm{\state_{\run}-\state_{\run+1/2}}^{2}
	\notag\\
	&= - \frac{\hstr}{8}\sum_{\run=1}^{\nRuns}\frac{\next[\step]^2}{\tilde{\learn}_{\run+1}}\norm{\curr[\avg\state]-\lead[\avg\state]}^{2}
	\notag\\
	&\le  - \frac{\hstr}{8}\sum_{\run=1}^{\nRuns}\frac{1}{\tilde{\learn}_{\run+1}}  \frac{1}{\lips^2} \dnorm{\nabla \obj(\overline{\state}_{\run}) - \nabla \obj(\overline{\state}_{\run+1/2})}^{2} \notag\\
	&\le - \frac{\hstr}{8\lips^2} \sum_{\run=1}^{\nRuns} \frac{\curr[\step]^2 \curr[\mindiff]^2 }{\next[\tilde{\learn}]}.\label{eq:LG_2_term}
\end{align}

Finally, letting $\hconst = \sqrt{\range + \hstr\diampoints^{2}}$, \eqref{eq:inequa} yields:
\begin{align}
	\frac{\hconst^{2}}{\learn_{\nRuns+1}}
	&= \frac{\hconst^{2}}{\numer}  \sqrt{\denom + \sum_{\run=1}^{\nRuns}\curr[\step]^{2}\dnorm{\signal_{\run+1/2}-\signal_{\run}}^{2}} 
	\notag\\
	&\leq\frac{ \hconst^{2}}{\numer} \sqrt{\denom + 2\sum_{\run=1}^{\nRuns}\curr[\step]^{2}\mindiff_{\run}^{2}+2\sum_{\run=1}^{\nRuns}\curr[\step]^{2}\dnorm{\diff_{\run}}^{2}} 
	\notag\\
	&\leq \frac{\sqrt{2}\hconst^{2}}{\numer}\sqrt{\denom + \sum_{\run=1}^{\nRuns}\curr[\step]^{2}\mindiff_{\run}^{2}}
	+
	\frac{\sqrt{2}\hconst^{2}}{\numer}\sqrt{\sum_{\run=1}^{\nRuns}\curr[\step]^{2}\dnorm{\diff_{\run}}^{2}} 
	\notag\\
	&\le  \frac{\sqrt{2}\hconst^{2}}{\numer} \parens*{\sqrt{\denom} + \sum_{\run=1}^{\nRuns} \frac{\curr[\step]^2 \curr[\mindiff]^2 }{ \sqrt{\denom + \sum_{\runalt=1}^{\run} \iter[\step]^2 \iter[\mindiff]^2  }} }
	+
	\frac{\sqrt{2}\hconst^{2}}{\numer}\sqrt{\sum_{\run=1}^{\nRuns}\curr[\step]^{2}\dnorm{\diff_{\run}}^{2}} 
	\notag\\
	&\leq \frac{\sqrt{2}\hconst^{2}}{\numer^2}
	\sum_{\run=1}^{\nRuns}\curr[\step]^{2}\tilde{\learn}_{\run+1}\mindiff_{\run}^{2}
	+ \frac{\sqrt{2}\hconst^{2} \sqrt{\denom}}{\numer} +
	\frac{\sqrt{2}\hconst^{2}}{\numer}\sqrt{\sum_{\run=1}^{\nRuns}\curr[\step]^{2}\dnorm{\diff_{\run}}^{2}} .\label{eq:LG_3_term}
\end{align}
Combine \eqref{eq:LG_1_term}, \eqref{eq:LG_2_term} and \eqref{eq:LG_3_term} into \eqref{eq:equation-3}, we have:

\begin{align}
	\regstoch_{\nRuns}(\point) 
		&\leq \sqrt{2} \sum_{\run=1}^{\nRuns}\curr[\step]^{2}\mindiff_{\run}^{2}\left[ \parens*{  \frac{\range}{\numer^2} + \frac{\hstr\diampoints^{2}}{\numer^2} + \frac{1}{\hstr} } \tilde{\learn}_{\run+1}-\frac{\hstr}{8\lips^{2}\tilde{\learn}_{\run+1}} \right]
	\notag\\
	&+ \sqrt{2} \parens*{\frac{\range}{\numer} + \frac{\hstr\diampoints^{2}}{\numer} + \frac{\numer}{\hstr}} \sqrt{\sum_{\run=1}^{\nRuns}\curr[\step]^{2}\dnorm{\diff_{\run}}^{2}}
	\notag\\
	&\qquad
	+ \bracks*{\frac{\numer \sqrt{\denom}}{\hstr} \parens*{\sqrt{2} - \frac{1}{2}} - \frac{\hstr \diampoints^2 \sqrt{\denom}}{\numer} + \frac{\sqrt{2}\hconst^{2} \sqrt{\denom}}{\numer}}. \label{eq:LG_3.5}
\end{align}
Now, define $\nRuns_0$ as follows:
\begin{equation}
	\nRuns_0=\max\braces*{1\leq \run \leq \nRuns: \tilde{\learn}_{\run+1}\geq \frac{\sqrt{\hstr}}{ \sqrt{8\lips^2 \parens*{ \frac{\range}{\numer^2} + \frac{\hstr\diampoints^{2}}{\numer^2} + \frac{1}{\hstr} }}}}
\end{equation}

In other words, for any $\run > \nRuns_0$, $\parens*{  \frac{\range}{\numer^2} + \frac{\hstr\diampoints^{2}}{\numer^2} + \frac{1}{\hstr} } \tilde{\learn}_{\run+1}-\frac{\hstr}{8\lips^{2}\tilde{\learn}_{\run+1}} <0$. As a consequence, 
\begin{align}
	&\sqrt{2} \sum_{\run=1}^{\nRuns}\curr[\step]^{2}\mindiff_{\run}^{2}\left[ \parens*{  \frac{\range}{\numer^2} + \frac{\hstr\diampoints^{2}}{\numer^2} + \frac{1}{\hstr} } \tilde{\learn}_{\run+1}-\frac{\hstr}{8\lips^{2}\tilde{\learn}_{\run+1}} \right] \notag\\
	=	&\sqrt{2} \sum_{\run=1}^{\nRuns_0}\curr[\step]^{2}\mindiff_{\run}^{2}\left[ \parens*{  \frac{\range}{\numer^2} + \frac{\hstr\diampoints^{2}}{\numer^2} + \frac{1}{\hstr} } \tilde{\learn}_{\run+1}-\frac{\hstr}{8\lips^{2}\tilde{\learn}_{\run+1}} \right]
	\notag\\
	\le &\sqrt{2}  \parens*{  \frac{\range}{\numer^2} + \frac{\hstr\diampoints^{2}}{\numer^2} + \frac{1}{\hstr} } \sum_{\run=1}^{\nRuns_0}\curr[\step]^{2}\mindiff_{\run}^{2} \tilde{\learn}_{\run+1}  \notag\\
	= &\sqrt{2}  \parens*{  \frac{\range}{\numer^2} + \frac{\hstr\diampoints^{2}}{\numer^2} + \frac{1}{\hstr} } \sum_{\run=1}^{\nRuns_0} \numer \frac{ \curr[\step]^{2}\mindiff_{\run}^{2}}{\sqrt{\denom + \sum_{\runalt=1}^{\run}\alpha_{\runalt}^{2}\mindiff_{\runalt}^{2}}}
	\notag\\
	\le &\sqrt{2}  \parens*{  \frac{\range}{\numer} + \frac{\hstr\diampoints^{2}}{\numer} + \frac{\numer}{\hstr} } \parens*{2 \sqrt{\denom + \sum_{\run=1}^{\nRuns_0} \curr[\step]^2 \curr[\mindiff]^2}  - \sqrt{\denom} }
	\notag\\
	= &2  \sqrt{2}  \parens*{  \frac{\range}{\numer} + \frac{\hstr\diampoints^{2}}{\numer} + \frac{\numer}{\hstr} } \frac{\numer}{\tilde{\learn}_{\nRuns_0+1} } - \sqrt{2 \denom}  \parens*{  \frac{\range}{\numer} + \frac{\hstr\diampoints^{2}}{\numer} + \frac{\numer}{\hstr} }
	\notag\\
	\le &8 \parens*{  \frac{\range}{\numer^2} + \frac{\hstr\diampoints^{2}}{\numer^2} + \frac{1}{\hstr} }^{3/2}  \frac{{\numer}^2 \lips }{\sqrt{\hstr}} - \sqrt{2 \denom}  \parens*{  \frac{\range}{\numer} + \frac{\hstr\diampoints^{2}}{\numer} + \frac{\numer}{\hstr} } \label{eq:LG_4}
	.
\end{align}

Combine \eqref{eq:LG_3.5} with \eqref{eq:LG_4} and use the fact that $\exof*{ \dnorm*{\diff}^2} \le 4 \sigma^2$, we have:

\begin{align}
\exof*{\regstoch_{\nRuns}(\point)}
	&\leq  8 \parens*{  \frac{\range}{\numer^2} + \frac{\hstr\diampoints^{2}}{\numer^2} + \frac{1}{\hstr} }^{3/2}  \frac{{\numer}^2 \lips }{\sqrt{\hstr}} - \sqrt{2 \denom}  \parens*{  \frac{\range}{\numer} + \frac{\hstr\diampoints^{2}}{\numer} + \frac{\numer}{\hstr} }
	\notag\\
	&+ 2\sqrt{2} \parens*{\frac{\range}{\numer} + \frac{\hstr\diampoints^{2}}{\numer} + \frac{\numer}{\hstr}} \sigma \sqrt{\sum_{\run=1}^{\nRuns}\curr[\step]^{2}}
	\notag\\
	&\qquad
	+ \bracks*{\frac{\numer \sqrt{\denom}}{\hstr} \parens*{\sqrt{2} - \frac{1}{2}} - \frac{\hstr \diampoints^2 \sqrt{\denom}}{\numer} + \frac{\sqrt{2}\hconst^{2} \sqrt{\denom}}{\numer}}
\end{align}

Recall the choice $\curr[\step] = \run, \forall \run$, apply \cref{lem:regret}, we have:


\begin{align}
\ex\left[\obj(\overline{\state}_{\nRuns+1/2})-\min_{\point \in \points}\obj(\point) \right]
	&\leq	\frac{16}{\nRuns^2} \parens*{  \frac{\range}{\numer^2} + \frac{\hstr\diampoints^{2}}{\numer^2} + \frac{1}{\hstr} }^{3/2}  \frac{{\numer}^2 \lips }{\sqrt{\hstr}}
	\notag\\
	&+ \frac{4\sqrt{2} }{\sqrt{\nRuns}} \parens*{\frac{\range}{\numer} + \frac{\hstr\diampoints^{2}}{\numer} + \frac{\numer}{\hstr}} \sdev
	+ \frac{2 \const(\numer, \denom)}{\nRuns^2}. \label{eq:last}
\end{align}
where we set
\begin{equation}
\const(\numer, \denom) \defeq \frac{\numer \sqrt{\denom}}{\hstr} \parens*{\sqrt{2} - \frac{1}{2}} - \frac{\hstr \diampoints^2 \sqrt{\denom}}{\numer} + \frac{\sqrt{2}\hconst^{2} \sqrt{\denom}}{\numer} - \sqrt{2 \denom}  \parens*{  \frac{\range}{\numer} + \frac{\hstr\diampoints^{2}}{\numer} + \frac{\numer}{\hstr} }.
\end{equation}
Finally, replace $\numer = \sqrt{\hstr \hconst^{2}}$ and $\denom = \hstr$ as chosen in \cref{thm:undergrad} into \eqref{eq:last} and note that with these choices, $\const(\numer, \denom) \le -\frac{1}{2} \hconst \sqrt{\hstr} \le 0$; we rewrite \eqref{eq:last} as follows:
\begin{equation}
	\ex\left[\obj(\overline{\state}_{\nRuns+1/2})-\min_{\point \in \points}\obj(\point) \right] \leq	\frac{32\sqrt{2} \lips}{\nRuns^2} \parens*{ \frac{\hconst^{2}}{ {\hstr}} }	
	+
	\frac{8\sqrt{2} \sigma }{\sqrt{\nRuns}}  \frac{\hconst}{\sqrt{\hstr}}.
\end{equation}

\para{Convergence of \undergrad in unbounded domains} Finally, we give the proof of \cref{thm:unbounded} concerning the deterministic \ac{SFO} in the \eqref{eq:LG} case with a possibly \emph{unbounded} domain $\points$.

\begin{proof}
Since the respective learning rate $\curr[\learn]$
  is non-increasing and non-negative, we have that its limit exists. Particularly,
\begin{equation}
\lim_{\run \to \infty} \eta_{\run}=\inf\nolimits_{\run \in \N} \eta_{\run}
	\geq 0
\end{equation}
Let us now assume that $\inf_{\run \in \N}\learn_{\run}=0$.
Then, by applying \cref{prop:template} we have:
\begin{align}
\sum_{\run=1}^{\nRuns} \curr[\step]\braket{\nabla \obj(\overline{\state}_{\run+1/2})}{\state_{\run+1/2}-\point}
	&\leq \frac{\hreg(\point)-\min_{\point\in\points}\hreg(\point)}{\learn_{\nRuns+1}}
	\\
	&+ \sum_{\run=1}^{\nRuns}\curr[\step]\braket{\nabla \obj(\overline{\state}_{\run+1/2})-\nabla \obj(\overline{\state}_{\run+1/2})}{\state_{\run}-\next[\state]}
	\notag\\
	&-\sum_{\run=1}^{\nRuns}\frac{\hstr}{2\learn_{\run}}\norm{\next[\state]-\state_{\run+1/2}}^{2}-\sum_{\run=1}^{\nRuns}\frac{\hstr}{2\learn_{\run}}\norm{\state_{\run+1/2}-\state_{\run}}^{2}
\end{align}
Now for the term $\sum_{\run=1}^{\nRuns}\curr[\step]\braket{\nabla \obj(\overline{\state}_{\run+1/2})-\nabla \obj(\overline{\state}_{\run})}{\state_{\run+1/2}-\next[\state]}-\sum_{\run=1}^{\nRuns}\frac{\hstr}{2\learn_{\run}}\norm{\next[\state]-\state_{\run+1/2}}^{2}$ we have:
\begin{align}
- \sum_{\run=1}^{\nRuns}\frac{\hstr}{2\learn_{\run}}\norm{\next[\state]-\state_{\run+1/2}}^{2}
	&+ \sum_{\run=1}^{\nRuns}\curr[\step]\braket{\nabla \obj(\overline{\state}_{\run+1/2})-\nabla \obj(\overline{\state}_{\run+1/2})}{\state_{\run}-\next[\state]}
	&
	\notag\\
	&\leq \frac{1}{2\hstr}\sum_{\run=1}^{\nRuns}\curr[\step]^{2}\learn_{\run}
	\dnorm{\nabla \obj(\overline{\state}_{\runalt+1/2})-\nabla \obj(\overline{\state}_{\runalt})}^{2}
	\notag\\
	&+\frac{\hstr}{2}\sum_{\run=1}^{\nRuns}\frac{1}{\learn_{\run}}\norm{\next[\state]-\state_{\run+1/2}}^{2}-\sum_{\run=1}^{\nRuns}\frac{\hstr}{2\learn_{\run}}\norm{\next[\state]-\state_{\run+1/2}}^{2}
\end{align}
which readily yields:
\begin{align}
\sum_{\run=1}^{\nRuns}\curr[\step]\braket{
\nabla \obj(\overline{\state}_{\run+1/2})-\nabla \obj(\overline{\state}_{\run})}{\state_{\run+1/2}-\next[\state]}
	&- \sum_{\run=1}^{\nRuns}\frac{\hstr}{2\learn_{\run}}\norm{\next[\state]-\state_{\run+1/2}}^{2}
	\notag\\
	&\leq \frac{1}{2\hstr}\sum_{\run=1}^{\nRuns}\curr[\step]^{2}\learn_{\run}
	\dnorm{\nabla \obj(\overline{\state}_{\runalt+1/2})-\nabla \obj(\overline{\state}_{\runalt})}^{2}
\end{align}
Hence, putting everything together, we get:
\begin{align}
\sum_{\run=1}^{\nRuns}\curr[\step]\braket{\nabla \obj(\overline{{\state}}_{{\run+1/2}})}{\state_{\run+1/2}-\point}
	&\leq \frac{1}{2\hstr}\sum_{\run=1}^{\nRuns}\curr[\step]^{2}\learn_{\run}
	\dnorm{\nabla \obj(\overline{\state}_{\runalt+1/2})-\nabla \obj(\overline{\state}_{\runalt})}^{2}
	\notag\\
	&- \frac{\hstr}{2}\sum_{\run=1}^{\nRuns}\frac{1}{\learn_{\run}}\norm{\state_{\run}-\state_{\run+1/2}}^{2} 
	\label{eq:above}
\end{align}

Moreover, since $\obj$ is smooth we have:
\begin{align}
\dnorm{\nabla \obj(\overline{\state}_{\run+1/2})-\nabla \obj(\overline{\state}_{\run})}^{2}
	&\leq \lips^{2} \norm{\overline{\state}_{\run+1/2}-\overline{\state}_{\run}}^{2}
	\notag\\
	&\leq \lips^{2} \frac{\curr[\step]^{2}}{\parens*{\sum_{\run=1}^{\nRuns}\curr[\step]}^{2}}\norm{\state_{\run+1/2}-\state_{\run}}^{2}
	\notag\\
	&=\lips^{2}\frac{4\run^{2}}{\run^{2}(\run+1)^{2}}\norm{\state_{\run+1/2}-\state_{\run}}^{2}
	\notag\\
	&\leq \frac{4\lips^{2}}{\curr[\step]^{2}}\norm{\state_{\run+1/2}-\state_{\run}}^{2}
\end{align}
Combining this with the fact that $\curr[\learn]$ is a decreasing sequence, we can rewrite \eqref{eq:above} as follows:
\begin{align}
\sum_{\run=1}^{\nRuns}\curr[\step]\braket{\nabla \obj(\overline{{\state}}_{{\run+1/2}})}{\state_{\run+1/2}-\point}
	&\leq \frac{\init[\learn]}{2\hstr}\sum_{\run=1}^{\nRuns}\curr[\step]^{2}  \dnorm{\nabla \obj(\overline{\state}_{\runalt+1/2})-\nabla \obj(\overline{\state}_{\runalt})}^{2}
	\notag\\
	&-\frac{\hstr}{8\lips^{2}}\sum_{\run=1}^{\nRuns}\frac{\curr[\step]^{2}}{\learn_{\run}}\dnorm{\nabla \obj(\overline{\state}_{\run+1/2})-\nabla \obj(\overline{\state}_{\run})}^{2}]
	\label{eq:sum}
\end{align}

In the sequel, we look for the appropriate bounds of the two terms in the right-hand-side of \eqref{eq:sum}. We start with the second term. From \eqref{eq:app_adaEW_LemKavis}, we also have $\sum_{\run=1}^{\nRuns}\curr[\step]\braket{\nabla \obj(\overline{{\state}}_{{\run+1/2}})}{\state_{\run+1/2}-\point} \ge 0$. 
%
%
Combine this with \eqref{eq:sum}, we have:
%
%
%
\begin{equation}
	0\leq \frac{1}{2\hstr}\sum_{\run=1}^{\nRuns}\curr[\step]^{2}\learn_{\run}\dnorm{\nabla \obj(\overline{\state}_{\runalt+1/2})-\nabla \obj(\overline{\state}_{\runalt})}^{2}-\frac{\hstr}{8\lips^{2}}\sum_{\run=1}^{\nRuns}\frac{\curr[\step]^{2}}{\learn_{\run}}\dnorm{\nabla \obj(\overline{\state}_{\run+1/2})-\nabla \obj(\overline{\state}_{\run})}^{2}
\end{equation}
Hence by rearranging we have:
\begin{align}
\sum_{\run=1}^{\nRuns} \frac{\hstr\curr[\step]^{2}}{16\learn_{\run}\lips^{2}}\dnorm{\nabla \obj(\overline{\state}_{\run+1/2})-\nabla \obj(\overline{\state}_{\run})}^{2}&\leq \frac{1}{2\hstr}\sum_{\run=1}^{\nRuns}\curr[\step]^{2}\learn_{\run}\dnorm{\nabla \obj(\overline{\state}_{\runalt+1/2})-\nabla \obj(\overline{\state}_{\runalt})}^{2}
	\notag\\
	&- \frac{\hstr}{16\lips^{2}}\sum_{\run=1}^{\nRuns}\frac{\curr[\step]^{2}}{\learn_{\run}}\dnorm{\nabla \obj(\overline{\state}_{\run+1/2})-\nabla \obj(\overline{\state}_{\run})}^{2}
\notag\\
&=\frac{1}{2}\sum_{\run=1}^{\nRuns}\curr[\step]^{2}\dnorm{\nabla \obj(\overline{\state}_{\run+1/2})-\nabla \obj(\overline{\state}_{\run})}^{2}
\left[\frac{\learn_{\run}}{\hstr}-\frac{\hstr}{8\learn_{\run}\lips^{2}}
\right]
\end{align}
Now, since we assumed that $\learn_{\run}$ converges to $0$, there exists some $\run_{0}\in \N$ such that:
\begin{equation}
	\learn_{\run}\leq \frac{\hstr}{\sqrt{8}\lips}\;\;\text{for all}\;\;\run >\run_{0}
\end{equation}
which directly yields that $\left[\frac{\learn_{\run}}{\hstr}-\frac{\hstr}{8\learn_{\run}\lips^{2}}
\right]\leq 0$ for all $\run > \nRuns_0$.
Hence, we have:
\begin{equation}
 \frac{\hstr}{16\lips^{2}}\sum_{\run=1}^{\nRuns}\frac{\curr[\step]^{2}}{\learn_{\run}}\dnorm{\nabla \obj(\overline{\state}_{\run+1/2})-\nabla \obj(\overline{\state}_{\run})}^{2}\leq \frac{1}{2}\sum_{\run=1}^{\nRuns_0}\curr[\step]^{2}\dnorm{\nabla \obj(\overline{\state}_{\run+1/2})-\nabla \obj(\overline{\state}_{\run})}^{2}
\left[\frac{\learn_{\run}}{\hstr}-\frac{\hstr}{8\learn_{\run}\lips^{2}}
\right]
\end{equation}
On the other hand, we have:
\begin{align}
	\frac{1}{\learn_{\run}}&=\frac{1}{\sqrt{\hstr}}\sqrt{\hstr+\sum_{\runalt=1}^{\run-1}\alpha_{\runalt}^{2}\dnorm{\nabla \obj(\overline{\state}_{\runalt+1/2})-\nabla \obj(\overline{\state}_{\runalt})}^{2}}	
	\geq \frac{1}{\sqrt{\hstr}}\sqrt{\hstr}
	= 1
\end{align}
and hence
\begin{align}
\frac{\hstr}{16\lips^{2}}\sum_{\run=1}^{\nRuns}\frac{\curr[\step]^{2}}{\learn_{\run}}\dnorm{\nabla \obj(\overline{\state}_{\run+1/2})-\nabla \obj(\overline{\state}_{\run})}^{2}&\geq \frac{\hstr}{16\lips^{2}}\sum_{\run=1}^{\nRuns}\curr[\step]^{2}\dnorm{\nabla \obj(\overline{\state}_{\run+1/2})-\nabla \obj(\overline{\state}_{\run})}^{2}
	\notag\\
	&=\frac{\hstr^{2}}{16\lips^{2}\hstr}\sum_{\run=1}^{\nRuns}\curr[\step]^{2}\dnorm{\nabla \obj(\overline{\state}_{\run+1/2})-\nabla \obj(\overline{\state}_{\run})}^{2}
	\notag\\
	&=\frac{\hstr^{2}}{16\learn_{\nRuns-1}^{2}\lips^{2}}
\end{align}
So, summarizing we have:
\begin{equation}
\frac{\hstr^{2}}{16\learn_{\nRuns-1}^{2}\lips^{2}}\leq \frac{1}{2}\sum_{\run=1}^{\nRuns_0}\curr[\step]^{2}\dnorm{\nabla \obj(\overline{\state}_{\run+1/2})-\nabla \obj(\overline{\state}_{\run})}^{2}
\left[\frac{\learn_{\run}}{\hstr}-\frac{\hstr}{8\learn_{\run}\lips^{2}}
\right]	
\end{equation}
We now focus on the first term of the right-hand-size of \eqref{eq:sum}. If one lets $\nRuns$ to infinity and recalling the fact that we assumed that $\learn_{\run}$ converges to $0$ (and so $1/\learn_{\run}^{2}\to \infty$), we have that:
\begin{equation}
\infty
	\leq \frac{1}{2}\sum_{\run=1}^{\nRuns_0}\curr[\step]^{2}\dnorm{\nabla \obj(\overline{\state}_{\run+1/2})-\nabla \obj(\overline{\state}_{\run})}^{2}
\left[\frac{\learn_{\run}}{\hstr}-\frac{\hstr}{8\learn_{\run}\lips^{2}}
\right]
	<\infty
\end{equation}
a contradiction.
This shows that $\inf_{\run \in \N}\learn_{\run}>0$ and hence
\begin{align}
	\sum_{\run=1}^{+\infty}\curr[\step]^{2}\dnorm{\nabla \obj(\overline{\state}_{\run+1/2})-\nabla \obj(\overline{\state}_{\run})}^{2}&=\lim_{\nRuns \to +\infty}\sum_{\run=1}^{\nRuns}\curr[\step]^{2}\dnorm{\nabla \obj(\overline{\state}_{\run+1/2})-\nabla \obj(\overline{\state}_{\run})}^{2} \notag
	\\
	&=\lim_{\nRuns \to \infty}\frac{\hstr}{\learn_{\run}^{2}}-\hstr \notag
	\\
	&=\frac{\hstr}{\inf_{\run}\learn_{\run}}^{2} - \hstr
	<\infty
\end{align}
so our proof is complete.
\end{proof}

\section{Additional Numerical Experiments}
\label{app:numerics}

\begin{figure}[t]
	\centering
		\begin{tikzpicture}
			\node (img){\includegraphics[width = .6\textwidth]{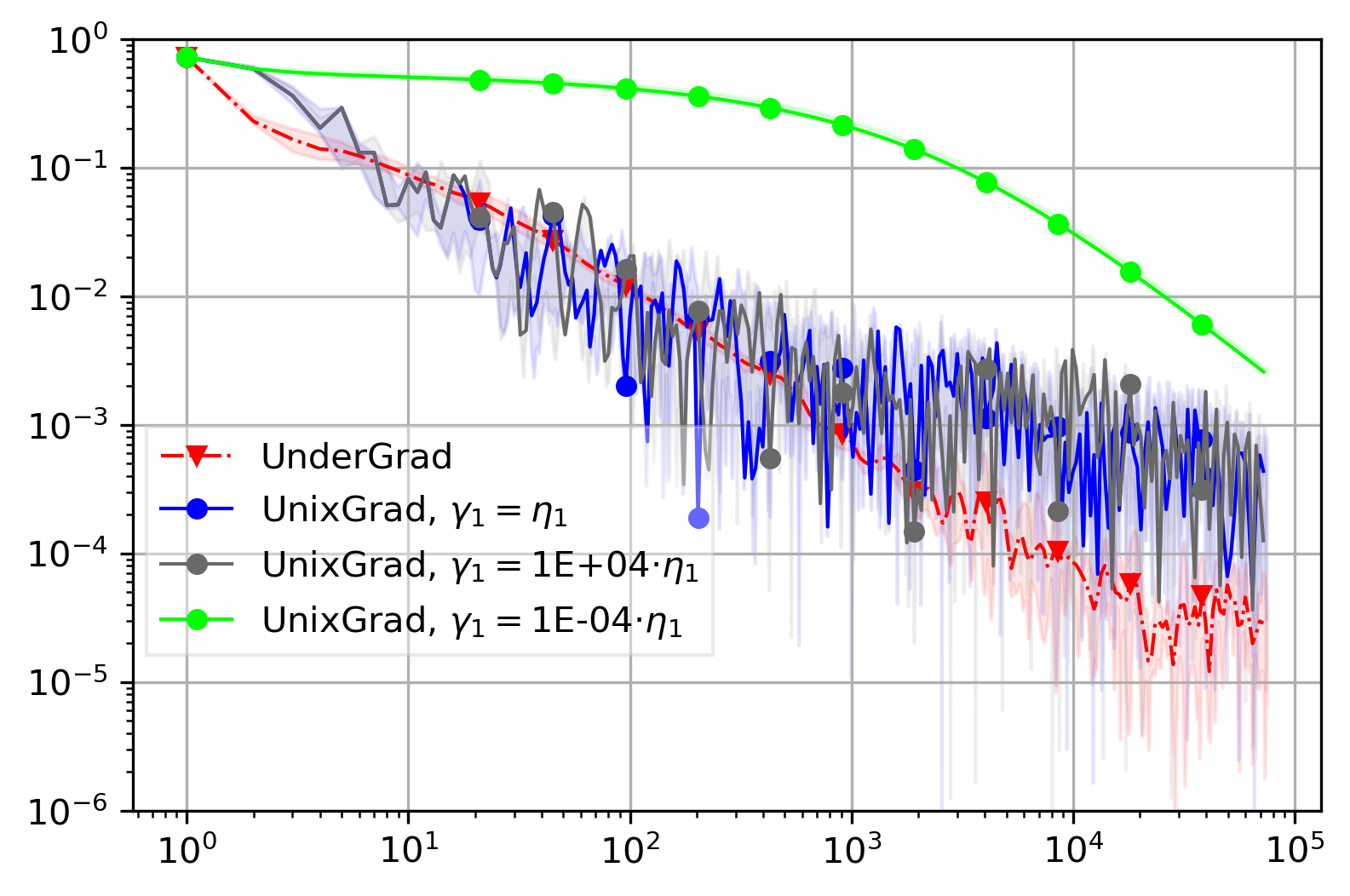}};
			\node[below=of img, node distance=0cm, yshift=1.2cm, xshift =0.2cm] {\scriptsize${\nRuns}$};
			\node[left=of img, node distance=0cm, rotate=90, anchor=center,xshift=0cm, yshift=-1.1cm] {\scriptsize${\gap(\nRuns)}$};
		\end{tikzpicture}
		\caption{\footnotesize Convergence of \undergrad and \unixgrad in the simplex setup with a noisy \ac{SFO}. The plot is drawn in log-log scale. The $y$-axis corresponds to the differences between the $\obj$-value of the relevant point of each algorithm and $\min \obj$.}
		\label{fig:stochastic}
\end{figure}

In this last section, we report another numerical experiment highlighting the universality of \undergrad. In this experiment, we also focus on the simplex setup as presented in \cref{sec:numerics}. However, this time, we work with a noisy \ac{SFO} that returns first-order feedback that is perturbed by a noise generated from a pre-determined zero-mean normal distribution. We compare the performances of \undergrad and \unixgrad, both run with the entropic regularizer. The result of this experiment is reported in \cref{fig:stochastic}.

\cref{fig:stochastic} shows that \undergrad obtains the optimal rate $\bigoh(1/\sqrt{\nRuns})$ in this set-up. \unixgrad can also obtain the same rate but only when its step-size update rule is chosen appropriately (note again that with entropic regularizer, the update rule \eqref{eq:step-unix} of \unixgrad is not available due to the fact that $\bregdiam = \infty$): when $\stepalt_{1}$ is chosen with the same or larger magnitude of \undergrad's initial learning rate, \unixgrad converges with the rate $\bigoh(1/\sqrt{\nRuns})$; but if $\stepalt_1$ is too small (e.g., when $\stepalt_1 = 10^{-3} \cdot \learn_1$), \unixgrad can have a very long warming up phase. This experiment reasserts that in cases where the step-size update rule \eqref{eq:step-unix} is unavailable, it is non-trivial to choose an appropriate step-size update rule of \unixgrad: small $\init[\stepalt]$ might lead to better performances under perfect \ac{SFO} (cf. \cref{sec:numerics}) but might create unwanted behaviors in noisy \ac{SFO} setups. On the contrary, \undergrad does not encounter such issues in our experiments.

Finally,  we conduct another experiment to confirm the dependency of the convergence rates of \undergrad on the noise level $\sdev$. The result is reported in \cref{fig:noise}.
\begin{figure}[t]
	\centering
		\begin{tikzpicture}
			\node (img){\includegraphics[width = .6\textwidth]{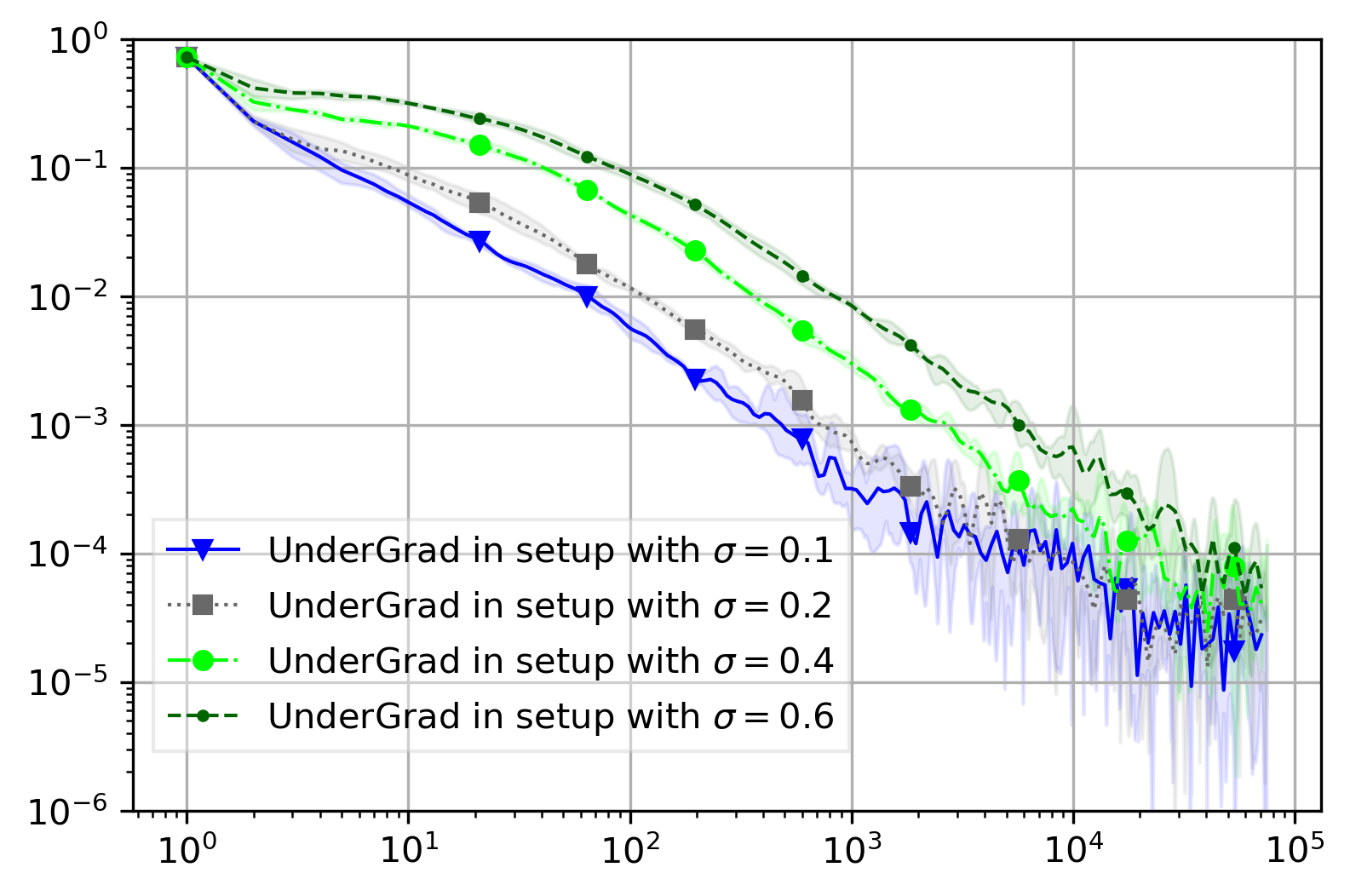}};
			\node[below=of img, node distance=0cm, yshift=1.2cm, xshift =0.2cm] {\scriptsize${\nRuns}$};
			\node[left=of img, node distance=0cm, rotate=90, anchor=center,xshift=0cm, yshift=-1.1cm] {\scriptsize${\gap(\nRuns)}$};
		\end{tikzpicture}
		\caption{\footnotesize Convergence of \undergrad in the simplex setup with different noise levels of the \ac{SFO}.}
		\label{fig:noise}
\end{figure}

\section*{Acknowledgments}
\begingroup
\small
KA and VC were supported by the European Research Council (ERC) under the European Union's Horizon 2020 research and innovation programme (grant agreement \textnumero $725594$ - time-data) and from the Swiss National Science Foundation (SNSF) under  grant number $200021\_205011$.
KYL is grateful for financial support by Israel Science Foundation (grant No. 447/20),  by
Israel PBC-VATAT, and by the Technion Center for Machine Learning and Intelligent Systems (MLIS).  
PM is grateful for financial support by
the French National Research Agency (ANR) in the framework of
the ``Investissements d'avenir'' program (ANR-15-IDEX-02),
the LabEx PERSYVAL (ANR-11-LABX-0025-01),
MIAI@Grenoble Alpes (ANR-19-P3IA-0003),
and
the bilateral ANR-NRF grant ALIAS (ANR-19-CE48-0018-01).
\endgroup

\bibliographystyle{icml2022}
\bibliography{bibtex/IEEEabrv,bibtex/Bibliography-DQV,bibtex/Bibliography-PM}

\begin{thebibliography}{47}
\providecommand{\natexlab}[1]{#1}
\providecommand{\url}[1]{\texttt{#1}}
\expandafter\ifx\csname urlstyle\endcsname\relax
  \providecommand{\doi}[1]{doi: #1}\else
  \providecommand{\doi}{doi: \begingroup \urlstyle{rm}\Url}\fi

\bibitem[{Allen-Zhu} \& Orecchia(2017){Allen-Zhu} and
  Orecchia]{AllenOrecchia2017}
{Allen-Zhu}, Z. and Orecchia, L.
\newblock {Linear Coupling: An Ultimate Unification of Gradient and Mirror
  Descent}.
\newblock In \emph{Proceedings of the 8th Innovations in Theoretical Computer
  Science}, ITCS~'17, 2017.
\newblock Full version available at \url{http://arxiv.org/abs/1407.1537}.

\bibitem[Antonakopoulos \& Mertikopoulos(2021)Antonakopoulos and
  Mertikopoulos]{AM21}
Antonakopoulos, K. and Mertikopoulos, P.
\newblock Adaptive first-order methods revisited: {Convex} optimization without
  {Lipschitz} requirements.
\newblock In \emph{NeurIPS '21: Proceedings of the 35th International
  Conference on Neural Information Processing Systems}, 2021.

\bibitem[Antonakopoulos et~al.(2019)Antonakopoulos, Belmega, and
  Mertikopoulos]{ABM19}
Antonakopoulos, K., Belmega, E.~V., and Mertikopoulos, P.
\newblock An adaptive mirror-prox algorithm for variational inequalities with
  singular operators.
\newblock In \emph{NeurIPS '19: Proceedings of the 33rd International
  Conference on Neural Information Processing Systems}, 2019.

\bibitem[Antonakopoulos et~al.(2021{\natexlab{a}})Antonakopoulos, Belmega, and
  Mertikopoulos]{ABM21}
Antonakopoulos, K., Belmega, E.~V., and Mertikopoulos, P.
\newblock Adaptive extra-gradient methods for min-max optimization and games.
\newblock In \emph{ICLR '21: Proceedings of the 2021 International Conference
  on Learning Representations}, 2021{\natexlab{a}}.

\bibitem[Antonakopoulos et~al.(2021{\natexlab{b}})Antonakopoulos, Pethick,
  Kavis, Mertikopoulos, and Cevher]{APKM+21}
Antonakopoulos, K., Pethick, T., Kavis, A., Mertikopoulos, P., and Cevher, V.
\newblock Sifting through the noise: {Universal} first-order methods for
  stochastic variational inequalities.
\newblock In \emph{NeurIPS '21: Proceedings of the 35th International
  Conference on Neural Information Processing Systems}, 2021{\natexlab{b}}.

\bibitem[Auer et~al.(2002)Auer, Cesa-Bianchi, and Gentile]{ACBG02}
Auer, P., Cesa-Bianchi, N., and Gentile, C.
\newblock Adaptive and self-confident on-line learning algorithms.
\newblock \emph{Journal of Computer and System Sciences}, 64\penalty0
  (1):\penalty0 48--75, 2002.

\bibitem[Bach \& Levy(2019)Bach and Levy]{BL19}
Bach, F. and Levy, K.~Y.
\newblock A universal algorithm for variational inequalities adaptive to
  smoothness and noise.
\newblock In \emph{COLT '19: Proceedings of the 32nd Annual Conference on
  Learning Theory}, 2019.

\bibitem[Bauschke \& Combettes(2017)Bauschke and Combettes]{BC17}
Bauschke, H.~H. and Combettes, P.~L.
\newblock \emph{Convex Analysis and Monotone Operator Theory in Hilbert
  Spaces}.
\newblock Springer, New York, NY, USA, 2 edition, 2017.

\bibitem[Bilenne et~al.(2020)Bilenne, Mertikopoulos, and Belmega]{BMB20}
Bilenne, O., Mertikopoulos, P., and Belmega, E.~V.
\newblock Fast optimization with zeroth-order feedback in distributed
  multi-user {MIMO} systems.
\newblock \emph{{IEEE} Trans. Signal Process.}, 68:\penalty0 6085--6100,
  October 2020.

\bibitem[Boyd \& Vandenberghe(2004)Boyd and Vandenberghe]{BV04}
Boyd, S.~P. and Vandenberghe, L.
\newblock \emph{Convex Optimization}.
\newblock Cambridge University Press, Cambridge, UK, 2004.

\bibitem[Bubeck(2015)]{Bub15}
Bubeck, S.
\newblock Convex optimization: {Algorithms} and complexity.
\newblock \emph{Foundations and Trends in Machine Learning}, 8\penalty0
  (3-4):\penalty0 231--358, 2015.

\bibitem[Bubeck \& Cesa-Bianchi(2012)Bubeck and Cesa-Bianchi]{BCB12}
Bubeck, S. and Cesa-Bianchi, N.
\newblock Regret analysis of stochastic and nonstochastic multi-armed bandit
  problems.
\newblock \emph{Foundations and Trends in Machine Learning}, 5\penalty0
  (1):\penalty0 1--122, 2012.

\bibitem[Cesa-Bianchi \& Lugosi(2006)Cesa-Bianchi and Lugosi]{CBL06}
Cesa-Bianchi, N. and Lugosi, G.
\newblock \emph{Prediction, Learning, and Games}.
\newblock Cambridge University Press, 2006.

\bibitem[Cesa-Bianchi \& Lugosi(2012)Cesa-Bianchi and Lugosi]{CBL12}
Cesa-Bianchi, N. and Lugosi, G.
\newblock Combinatorial bandits.
\newblock \emph{Journal of Computer and System Sciences}, 78:\penalty0
  1404--1422, 2012.

\bibitem[Chen \& Teboulle(1993)Chen and Teboulle]{CT93}
Chen, G. and Teboulle, M.
\newblock Convergence analysis of a proximal-like minimization algorithm using
  {Bregman} functions.
\newblock \emph{SIAM Journal on Optimization}, 3\penalty0 (3):\penalty0
  538--543, August 1993.

\bibitem[Cutkosky(2019)]{Cut19}
Cutkosky, A.
\newblock Anytime online-to-batch, optimism and acceleration.
\newblock In \emph{ICML '19: Proceedings of the 36th International Conference
  on Machine Learning}, 2019.

\bibitem[Ene et~al.(2021)Ene, Nguyen, and Vladu]{ENV21}
Ene, A., Nguyen, H.~L., and Vladu, A.
\newblock Adaptive gradient methods for constrained convex optimization and
  variational inequalities.
\newblock In \emph{AAAI '21: Proceedings of the 35th Conference on Artificial
  Intelligence}, 2021.

\bibitem[Facchinei \& Pang(2003)Facchinei and Pang]{FP03}
Facchinei, F. and Pang, J.-S.
\newblock \emph{Finite-Dimensional Variational Inequalities and Complementarity
  Problems}.
\newblock Springer Series in Operations Research. Springer, 2003.

\bibitem[Gy{\"o}rgy et~al.(2007)Gy{\"o}rgy, Linder, Lugosi, and
  Ottucs{\'a}k]{GLLO07}
Gy{\"o}rgy, A., Linder, T., Lugosi, G., and Ottucs{\'a}k, G.
\newblock The online shortest path problem under partial monitoring.
\newblock \emph{Journal of Machine Learning Research}, 8:\penalty0 2369--2403,
  2007.

\bibitem[Hsieh et~al.(2021)Hsieh, Antonakopoulos, and Mertikopoulos]{HAM21}
Hsieh, Y.-G., Antonakopoulos, K., and Mertikopoulos, P.
\newblock Adaptive learning in continuous games: {Optimal} regret bounds and
  convergence to {Nash} equilibrium.
\newblock In \emph{COLT '21: Proceedings of the 34th Annual Conference on
  Learning Theory}, 2021.

\bibitem[Joulani et~al.(2020)Joulani, Raj, Gy{\"o}rgy, and
  Szepesv{\'a}ri]{JRGS20}
Joulani, P., Raj, A., Gy{\"o}rgy, A., and Szepesv{\'a}ri, C.
\newblock A simpler approach to accelerated stochastic optimization:
  {Iterative} averaging meets optimism.
\newblock In \emph{ICML '20: Proceedings of the 37th International Conference
  on Machine Learning}, 2020.

\bibitem[Juditsky et~al.(2011)Juditsky, Nemirovski, and Tauvel]{JNT11}
Juditsky, A., Nemirovski, A.~S., and Tauvel, C.
\newblock Solving variational inequalities with stochastic mirror-prox
  algorithm.
\newblock \emph{Stochastic Systems}, 1\penalty0 (1):\penalty0 17--58, 2011.

\bibitem[Kakade et~al.(2012)Kakade, Shalev-Shwartz, and Tewari]{KSST12}
Kakade, S.~M., Shalev-Shwartz, S., and Tewari, A.
\newblock Regularization techniques for learning with matrices.
\newblock \emph{The Journal of Machine Learning Research}, 13:\penalty0
  1865--1890, 2012.

\bibitem[Kavis et~al.(2019)Kavis, Levy, Bach, and Cevher]{KLBC19}
Kavis, A., Levy, K.~Y., Bach, F., and Cevher, V.
\newblock {UnixGrad}: {A} universal, adaptive algorithm with optimal guarantees
  for constrained optimization.
\newblock In \emph{NeurIPS '19: Proceedings of the 33rd International
  Conference on Neural Information Processing Systems}, 2019.

\bibitem[Korpelevich(1976)]{Kor76}
Korpelevich, G.~M.
\newblock The extragradient method for finding saddle points and other
  problems.
\newblock \emph{{\`E}konom. i Mat. Metody}, 12:\penalty0 747--756, 1976.

\bibitem[Lan(2012)]{Lan12}
Lan, G.
\newblock An optimal method for stochastic composite optimization.
\newblock \emph{Mathematical Programming}, 133\penalty0 (1-2):\penalty0
  365--397, June 2012.

\bibitem[Lattimore \& Szepesv{\'a}ri(2020)Lattimore and Szepesv{\'a}ri]{LS20}
Lattimore, T. and Szepesv{\'a}ri, C.
\newblock \emph{Bandit Algorithms}.
\newblock Cambridge University Press, Cambridge, UK, 2020.

\bibitem[Levy et~al.(2018)Levy, Yurtsever, and Cevher]{LYC18}
Levy, K.~Y., Yurtsever, A., and Cevher, V.
\newblock Online adaptive methods, universality and acceleration.
\newblock In \emph{NeurIPS '18: Proceedings of the 32nd International
  Conference of Neural Information Processing Systems}, 2018.

\bibitem[Li \& Orabona(2019)Li and Orabona]{LO19}
Li, X. and Orabona, F.
\newblock On the convergence of stochastic gradient descent with adaptive
  stepsizes.
\newblock In \emph{AISTATS '19: Proceedings of the 22nd International
  Conference on Artificial Intelligence and Statistics}, 2019.

\bibitem[McMahan \& Streeter(2010)McMahan and Streeter]{MS10}
McMahan, H.~B. and Streeter, M.
\newblock Adaptive bound optimization for online convex optimization.
\newblock In \emph{COLT '10: Proceedings of the 23rd Annual Conference on
  Learning Theory}, 2010.

\bibitem[Mertikopoulos \& Sandholm(2016)Mertikopoulos and Sandholm]{MS16}
Mertikopoulos, P. and Sandholm, W.~H.
\newblock Learning in games via reinforcement and regularization.
\newblock \emph{Mathematics of Operations Research}, 41\penalty0 (4):\penalty0
  1297--1324, November 2016.

\bibitem[Mertikopoulos \& Staudigl(2018)Mertikopoulos and Staudigl]{MerSta18}
Mertikopoulos, P. and Staudigl, M.
\newblock On the convergence of gradient-like flows with noisy gradient input.
\newblock \emph{SIAM Journal on Optimization}, 28\penalty0 (1):\penalty0
  163--197, January 2018.

\bibitem[Mertikopoulos \& Zhou(2019)Mertikopoulos and Zhou]{MZ19}
Mertikopoulos, P. and Zhou, Z.
\newblock Learning in games with continuous action sets and unknown payoff
  functions.
\newblock \emph{Mathematical Programming}, 173\penalty0 (1-2):\penalty0
  465--507, January 2019.

\bibitem[Mertikopoulos et~al.(2017)Mertikopoulos, Belmega, Negrel, and
  Sanguinetti]{MBNS17}
Mertikopoulos, P., Belmega, E.~V., Negrel, R., and Sanguinetti, L.
\newblock Distributed stochastic optimization via matrix exponential learning.
\newblock \emph{{IEEE} Trans. Signal Process.}, 65\penalty0 (9):\penalty0
  2277--2290, May 2017.

\bibitem[Mertikopoulos et~al.(2019)Mertikopoulos, Lecouat, Zenati, Foo,
  Chandrasekhar, and Piliouras]{MLZF+19}
Mertikopoulos, P., Lecouat, B., Zenati, H., Foo, C.-S., Chandrasekhar, V., and
  Piliouras, G.
\newblock Optimistic mirror descent in saddle-point problems: {Going} the extra
  (gradient) mile.
\newblock In \emph{ICLR '19: Proceedings of the 2019 International Conference
  on Learning Representations}, 2019.

\bibitem[Nemirovski(2004)]{Nem04}
Nemirovski, A.~S.
\newblock Prox-method with rate of convergence ${O}(1/t)$ for variational
  inequalities with {Lipschitz} continuous monotone operators and smooth
  convex-concave saddle point problems.
\newblock \emph{SIAM Journal on Optimization}, 15\penalty0 (1):\penalty0
  229--251, 2004.

\bibitem[Nemirovski \& Yudin(1983)Nemirovski and Yudin]{NY83}
Nemirovski, A.~S. and Yudin, D.~B.
\newblock \emph{Problem Complexity and Method Efficiency in Optimization}.
\newblock Wiley, New York, NY, 1983.

\bibitem[Nesterov(2004)]{Nes04}
Nesterov, Y.
\newblock \emph{Introductory Lectures on Convex Optimization: A Basic Course}.
\newblock Number~87 in Applied Optimization. Kluwer Academic Publishers, 2004.

\bibitem[Nesterov(2007)]{Nes07}
Nesterov, Y.
\newblock Dual extrapolation and its applications to solving variational
  inequalities and related problems.
\newblock \emph{Mathematical Programming}, 109\penalty0 (2):\penalty0 319--344,
  2007.

\bibitem[Nesterov(2015)]{Nes15}
Nesterov, Y.
\newblock Universal gradient methods for convex optimization problems.
\newblock \emph{Mathematical Programming}, 152\penalty0 (1-2):\penalty0
  381--404, 2015.

\bibitem[Rakhlin \& Sridharan(2013{\natexlab{a}})Rakhlin and
  Sridharan]{RS13-COLT}
Rakhlin, A. and Sridharan, K.
\newblock Online learning with predictable sequences.
\newblock In \emph{COLT '13: Proceedings of the 26th Annual Conference on
  Learning Theory}, 2013{\natexlab{a}}.

\bibitem[Rakhlin \& Sridharan(2013{\natexlab{b}})Rakhlin and
  Sridharan]{RS13-NIPS}
Rakhlin, A. and Sridharan, K.
\newblock Optimization, learning, and games with predictable sequences.
\newblock In \emph{NIPS '13: Proceedings of the 27th International Conference
  on Neural Information Processing Systems}, 2013{\natexlab{b}}.

\bibitem[Rockafellar(1970)]{Roc70}
Rockafellar, R.~T.
\newblock \emph{Convex Analysis}.
\newblock Princeton University Press, Princeton, NJ, 1970.

\bibitem[Telatar(1999)]{Tel99}
Telatar, I.~E.
\newblock Capacity of multi-antenna {Gaussian} channels.
\newblock \emph{European Transactions on Telecommunications and Related
  Technologies}, 10\penalty0 (6):\penalty0 585--596, 1999.

\bibitem[Vu et~al.(2021)Vu, Antonakopoulos, and Mertikopoulos]{VAM21}
Vu, D.~Q., Antonakopoulos, K., and Mertikopoulos, P.
\newblock Fast routing under uncertainty: {Adaptive} learning in congestion
  games with exponential weights.
\newblock In \emph{NeurIPS '21: Proceedings of the 35th International
  Conference on Neural Information Processing Systems}, 2021.

\bibitem[Ward et~al.(2019)Ward, Wu, and Bottou]{WWB19}
Ward, R., Wu, X., and Bottou, L.
\newblock {AdaGrad} stepsizes: {Sharp} convergence over nonconvex landscapes,
  from any initialization.
\newblock In \emph{ICML '19: Proceedings of the 36th International Conference
  on Machine Learning}, 2019.

\bibitem[Yu et~al.(2004)Yu, Rhee, Boyd, and Cioffi]{YRBC04}
Yu, W., Rhee, W., Boyd, S.~P., and Cioffi, J.~M.
\newblock Iterative water-filling for {Gaussian} vector multiple-access
  channels.
\newblock \emph{{IEEE} Trans. Inf. Theory}, 50\penalty0 (1):\penalty0 145--152,
  2004.

\end{thebibliography}

\end{document}